\def\append@label@year@{%
    \safe@set\@tempcnta\bib@year
    \edef\bib@citeyear{\the\@tempcnta}%
    \ifnum\bib@citeyear>9
      \append@to@stem{%
          \ifx\bib@year\@empty
          \else
            \@xp\year@short \bib@citeyear \@nil
          \fi
      }%
    \fi    
}
\let\oldtocsection=\tocsection
\renewcommand{\tocsection}[2]{\hspace{0em}\oldtocsection{#1}{#2}}
\begin{document}
\setlength{\unitlength}{2.5cm}

%%%%%%%%%%% theorem styles
\newtheorem{thm}{Theorem}[section]
\newtheorem{lm}[thm]{Lemma}
\newtheorem{prop}[thm]{Proposition}
\newtheorem{cor}[thm]{Corollary}
\newtheorem{conj}[thm]{Conjecture}

\theoremstyle{definition}
\newtheorem{dfn}[thm]{Definition}
\newtheorem{eg}[thm]{Example}
\newtheorem{rmk}[thm]{Remark}

\newcommand{\TA}{\text{A}}
\newcommand{\TB}{\text{B}}
\newcommand{\TC}{\text{C}}
\newcommand{\TD}{\text{D}}
\newcommand{\TE}{\text{E}}
\newcommand{\TF}{\text{F}}
\newcommand{\TG}{\text{G}}
\newcommand{\ii}{\mathbf{i}}

\newcommand{\F}{\mathbf{F}}
\newcommand{\N}{\mathbf{N}}
\newcommand{\R}{\mathbf{R}}
\newcommand{\C}{\mathbf{C}}
\newcommand{\Z}{\mathbf{Z}}
\newcommand{\Q}{\mathbf{Q}}
\newcommand{\T}{\mathbf{T}}
\newcommand{\K}{\mathbf{K}}
\newcommand{\LL}{\mathcal{L}}
\newcommand{\rA}{{\rm A}}
\newcommand{\rC}{{\rm C}}

\newcommand{\G}{\text{G}}
\newcommand{\re}{\text{Re}}
\newcommand{\im}{\text{Im}}
\newcommand{\gal}{\text{Gal}}
\newcommand{\ke}{\text{Ker}}
\newcommand{\ma}{\text{Max}}
\newcommand{\Spec}{\text{Spec}}
\newcommand{\Pic}{\text{Pic}}
\newcommand{\ord}{\text{ord}}
\newcommand{\app}{\thickapprox}
\newcommand{\deh}{H_\mca{D}}
\newcommand{\moh}{H_\mca{M}}
\newcommand{\ab}{\text{ab}}
\newcommand{\Mp}{\text{Mp}}
\newcommand{\Sp}{{\rm Sp}}
\newcommand{\GL}{{\rm GL}}
\newcommand{\PGL}{{\rm PGL}}
\newcommand{\SL}{{\rm SL}}
\newcommand{\Spin}{\text{Spin}}
\newcommand{\Ind}{\text{Ind}}
\newcommand{\Res}{\text{Res}}
\newcommand{\vs}{\vec{s}}
\newcommand{\Hom}{\text{Hom}}
\newcommand{\msc}[1]{\mathscr{#1}}
\newcommand{\mfr}[1]{\mathfrak{#1}}
\newcommand{\mca}[1]{\mathcal{#1}}
\newcommand{\mbf}[1]{\mathbb{#1}}
\newcommand{\wchi}{\wt{\chi}}
\newcommand{\GGMA}{\pmb{\Gamma}}
\newcommand{\into}{\hookrightarrow}
\newcommand{\onto}{\twoheadrightarrow}

\newcommand{\dual}[1]{{}^L\wt{#1}}
\newcommand{\cd}[1]{{\wt{#1}}^\vee}
\newcommand{\noa}{\mathbf{n}_o^\alpha}
\newcommand{\koa}{\mathbf{k}_o^\alpha}
\newcommand{\Hs}{\textsf{Hs}}
\newcommand{\Inc}{\textsf{Inc}}
\newcommand{\CExt}{\textsf{CExt}}
\newcommand{\Bis}{\textsf{Bis}}
\newcommand{\Rec}{\text{Rec}}
\newcommand{\s}{\mathbf{s}}
\newcommand{\cc}{\mathbf{c}}
\newcommand{\bfa}{\mathbf{a}}
\newcommand{\id}{{\rm id}}
\newcommand{\g}{\mathbf{g}_{\psi^{-1}}}
\newcommand{\w}{\mathbbm{w}}
\newcommand{\Ftn}{{\sf Ftn}}
\newcommand{\p}{\mathbf{p}}
\newcommand{\bq}{\mathbf{q}}
\newcommand{\WD}{\text{WD}}
\newcommand{\W}{\text{W}}
\newcommand{\Wh}{{\sf Wh}_\psi}
\newcommand{\ggma}{\pmb{\gamma}}
\newcommand{\sct}{\text{sc}}
\newcommand{\OF}{\mca{O}^\digamma}
\newcommand{\vep}{\pmb{\varepsilon}}
\newcommand{\gk}{c_{\sf gk}}

\newcommand{\cu}[1]{\textsc{\underline{#1}}}
\newcommand{\set}[1]{\left\{#1\right\}}
\newcommand{\ul}[1]{\underline{#1}}
\newcommand{\wt}[1]{\widetilde{#1}}
\newcommand{\angb}[2]{\left\langle #1, #2 \right\rangle}
\newcommand{\seq}[3]{\xymatrix{
#1 \ar@{^(->}[r] & #2 \ar@{>>}[r] & #3
}}
\newcommand{\wm}[1]{\wt{\mbf{#1}}}
\newcommand{\elt}[1]{\pmb{\big[} #1\pmb{\big]} }
\newcommand{\ceil}[1]{\left\lceil #1 \right\rceil}
\newcommand{\val}[1]{\left| #1 \right|}
%\title{$L$-groups and $L$-functions for B-D extensions}
%\author{}
%\address{Department of Mathematics, National University of Singapore, 10 Lower Kent Ridge Road, Singapore, 119076}
%\email{gaofan@nus.edu.sg}
%\date{}
%\subjclass[2010]{Primary 11F70}
%\keywords{Gindikin-Karpelevich formula, Brylinski-Deligne extensions, unramified representations, covering groups}
%\maketitle

\title[Bump-Hoffstein conjecture]{Generalized Bump-Hoffstein conjecture for coverings of the general linear groups}

\author{Fan Gao}
\address{Department of Mathematics, Purdue University, 150 N. University Street, West Lafayette, IN 47907}
\email{gaofan.math@gmail.com}
%\date{}
\subjclass[2010]{Primary 11F70; Secondary 22E50}
\keywords{Brylinski-Deligne covering groups, theta representations, Whittaker functionals, Bump-Hoffstein conjecture, dual groups}
\maketitle
\begin{abstract} We investigate the extent to which the Bump-Hoffstein conjecture could be generalized for central coverings of the general linear groups. We provide evidence for such generalized Bump-Hoffstein conjecture by proving some special cases.
\end{abstract}
\tableofcontents

\section{Introduction}
Rankin-Selberg integral has been one of the cornerstones for the theory of $L$-functions for reductive linear algebraic groups. In particular, the uniqueness of Whittaker functionals plays a crucial role for the decomposition of a global integral into product of local integrals. As a consequence, one could extract global $L$-functions from such decomposition. In contrast, for finite degree central coverings of linear algebraic groups, such uniqueness property rarely holds (cf. \cite{KP}, \cite{Ga2} and see however \cite{Szp}), and this has hindered a direct application of the Rankin-Selberg integral in the covering setting.

However, Bump and Hoffstein have made some important observations in their papers \cite{BH1} and \cite{BH2} decades ago. Based on their seminal work, a new way (in the sense of Piatetski-Shapiro and Rallis, see \cite{PSR} and also \cite{BFG}) of extracting global partial $L$-functions is available and investigated in the references cited above. They propose some deep conjectures regarding the Rankin-Selberg convolution of Whittaker models for Kazhdan-Patterson covering groups (cf. \cite{KP}) of the general linear group. 

We note that there are several Bump-Hoffstein conjectures related to Rankin-Selberg integrals; for us we exclusively refer to the specific one  as follows (cf. \cite{BH1}, \cite{BF}):
\begin{enumerate}
\item[$\bullet$] Let $(\wt{\GL}_r^{(n)}, \wt{\GL}_n^{(n)})$ be a pair of Kazhdan-Patterson $n$-fold covering groups with $r< n$. Let $\pi$ be generic unramified genuine representation of $\wt{\GL}_r^{(n)}$. Let $\Theta(\wt{\GL}_n^{(n)}, \chi)$ be a theta representation of $\wt{\GL}_n^{(n)}$ which possesses uniqueness of Whittaker model $\mca{W}^{\GL_n}$, where $\chi$ is a genuine exceptional character. Then, the Rankin-Selberg integral of $\overline{\mca{W}^{\GL_n}}$ against any Whittaker model $\mca{W}^{\GL_r}$ of $\pi$, which might not be unique in general, is equal to a certain $L$-function associated to $\pi$ and $\chi$ multiplied with $\overline{\mca{W}^{\GL_n}}(1) \cdot \mca{W}^{\GL_r}(1)$.
\end{enumerate}
For a more precise form, see the generalized Bump-Hoffstein Conjecture \ref{C:GBH} below.
\vskip 5pt

Following the work of \cite{KP} and \cite{BH1}, T. Suzuki proves in a series of papers the conjecture above when the representation $\pi$ involved is also a theta representation, see \cite{Suz1}, \cite{Suz2}. In fact, he generalizes the conjecture in another direction where $\Theta(\wt{\GL}_n^{(n)}, \chi)$ above is replaced by some theta-like representation which also has unique Whittaker models (cf. \cite{Suz3}). The approach of Suzuki relies on investigating the Shahidi local coefficient matrix for Kazhdan-Patterson covering groups firstly studied in \cite{KP}. Though conceptually clear, with this approach one often confronts with some computational difficulties, and a substantial part of Suzuki's argument relies on a delicate analysis of the matrix mentioned above.

In their excellent exposition \cite{BF}, Bump and Friedberg propose a way of attacking the Bump-Hoffstein conjecture from the viewpoint of generating functions. This dramatically reduces the explicit computation required as in the work of Suzuki. By \cite{BF}, the proof of Bump-Hoffstein conjecture above is reduced to a certain statement for integral representation of Whittaker values, which is completely proved recently by D. Ginzburg (\cite{Gin}). Therefore, the Bump-Hoffstein conjecture for Kazhdan-Patterson covering groups is proven by \cite{BF} coupled with \cite{Gin}.

We note that all the work mentioned above focuses only on Kazhdan-Patterson covering groups of $\GL_r$. In this paper, we investigate the extent to which the Bump-Hoffstein conjecture is expected to hold in general. Such consideration is motivated from the following observations and facts:

\begin{enumerate}
\item[$\bullet$] The structural theory of quite general covering groups of $\GL_r$ is developed and studied in \cite{BD}. Moreover, the theories of dual group  (cf. \cite{FL}, \cite{Mc1}, \cite{Re}) and $L$-group (cf. \cite{We2}) are also developed.  Besides this, some fundamental analysis is carried out in \cite{We2} and \cite{GG}.  Large part of the structural theory depends on combinatorial data, and thus amenable to explicit analysis.
\item[$\bullet$] For general coverings of a reductive linear algebraic group, the dimension of Whittaker functionals for theta representations is studied in details in \cite{Ga2}. In particular, for coverings of $\GL_r$, we could completely determine the dimension for their theta representations. Distinguished theta representations, which are analogues of $\Theta(\wt{\GL}_n^{(n)}, \chi)$ above, will be the key input, if one would like to formulate a Bump-Hoffstein conjecture for general covering groups of $\GL_r$.
\item[$\bullet$] A careful analysis of the argument in Suzuki gives hints that, at least for theta representations, the main argument should be adaptable to general coverings of $\GL_r$.
\end{enumerate}

Therefore, in this paper we consider the generalized Bump-Hoffstein and provide some evidence for its validity. We now briefly explain the content of this paper.

In \S 2, we outline the structural facts for general $n$-fold coverings of $\GL_r$, following quite closely \cite{Ga2}. We introduce the notion of a fundamental pair $(\wt{\GL}_r, \wt{\GL}_R ), r<R$ of $n$-fold covering groups, see Definition \ref{D:FP}. One special property is that the dual group of $\wt{\GL}_R$ from a fundamental pair is always $\mbf{GL}_R$.

In \S 3, we introduce Whittaker models for coverings of $\wt{\GL}_r$, recall and derive some results from \cite{Ga2}. We formulate the generalized Bump-Hoffstein Conjecture \ref{C:GBH} for a fundamental pair. Along the way, we prove some results on the Whittaker functions for theta representations.

In \S 4, we consider a fundamental pair $(\wt{\GL}_r^{(n)}, \wt{\GL}_R^{(n)} )$ and theta representations $\Theta(\wt{\GL}_r^{(n)}, \mu)$ and $\Theta(\wt{\GL}_R^{(n)}, \chi)$ for the two groups respectively. We show that the generalized Bump-Hoffstein conjecture holds in this case (i.e. for theta representations). Here $\Theta(\wt{\GL}_R^{(n)}, \chi)$ has uniqueness of Whittaker models, whereas $\Theta(\wt{\GL}_r^{(n)}, \mu)$ does not in general, albeit generic. Part of our proof follows closely that of Suzuki in \cite{Suz1}, and it relies crucially on the fact that the values of Whittaker functions for theta representations are represented by certain Gauss sums, proved in \S 3. The main result is Theorem \ref{T:theta}.

In \S 5, we consider the rank two case with $r=2$. That is, we consider a fundamental pair $(\wt{\GL}_2^{(n)}, \wt{\GL}_R^{(n)} )$ with $2< R$, and correspondingly a pair of representations $(\pi, \Theta(\wt{\GL}_R^{(n)}, \chi))$, where $\pi$ is any generic unramified representation of $\wt{\GL}_2^{(n)}$. We show that the Bump-Hoffstein conjecture holds in this case as well. The main result is summarized in Theorem \ref{T:rank2}.

 In the last section, we will mention in passing the work of Bump-Friedberg \cite{BF} and highlight on the analogous formulation for general coverings of $\GL_r$ which could be used to prove the generalized Bump-Hoffstein conjecture. The discussion in \cite{BF} could be carried in parallel for a fundamental pair in this paper, though we will refrain from giving duplicated details. Moreover, it is believed that the strategy in \cite{Gin} of proving the Bump-Hoffstein conjecture for Kazhdan-Patterson covering groups should be adaptable in the general setting. However, for lack of expertise, we will give no elaborate discussion on such connections.

We note that it is natural to inquire about the most general framework, which might lie beyond the notion of fundamental pair, and in which analogous Bump-Hoffstein conjecture holds. For instance, one might consider a certain pair $(\wt{\GL}_r, \wt{\GL}_R)$ of $n$-fold coverings groups such that the dual group of $\wt{\GL}_R$ is $\GL_R$ and its theta representation is distinguished (i.e. with unique Whittaker model). However, in the last section, we remark by considering certain Kazhdan-Patterson coverings that such a pair, albeit natural, does not seem to fit in a Bump-Hoffstein framework. In this regard, the constraints imposed on a fundamental pair seem to be sharp.
\vskip 5pt

We do not investigate any related global problem in this paper. However, the consideration of Conjecture \ref{C:GBH} is mainly motivated from studying the automorphic $L$-functions for $\wt{\GL}_r$ in the global context, see \cite{BF} and \cite{Gin}. The raison d'\^etre of the article is to present explicitly the results of interest, part of which might be known to experts already. We provide details for the computations, as some of the formulation (to the best of our knowledge) has not been made explicitly for general coverings of $\GL_r$. The reader could use the Kazhdan-Patterson covering groups (cf. \S \ref{S:KP}) and especially the Savin covering groups (cf. \S \ref{S:Savin}) as running examples for some new phenomena investigated in this paper.
\vskip 5pt

%%%
\section{Covering groups of $\GL_r$}
Let $F$ be a finite extension of $\Q_p$ with residue field of size $q$. Fix a uniformizer $\varpi$ of $F$. Contrary to the most natural assumption that $F^\times$ contains the full group $\bbmu_{n}$ of $n$-th roots of unity, we will assume through the paper $\bbmu_{2n} \subseteq F^\times$ to simplify some of the computation. We also assume that $p$ does not divide $2n$, and consequently $q\equiv 1 \mod 2n$ in this case.

For a brief introduction to covering groups with summarized results, we refer to \cite{Ga2}. We will recall below only some notations. Consider the general linear group $\mbf{GL}_r$ with root data 
$$\set{ X_{\GL_r},\ \Delta_{\GL_r},  \ \Phi_{\GL_r}; \ Y_{\GL_r}, \ \Delta_{\GL_r}^\vee, \ \Phi_{\GL_r}^\vee  }.$$
Let $\mbf{T}$ be a maximal split torus of $\mbf{GL}_r$. Here $\set{e_1, e_2, ..., e_r}$ is a basis for the cocharacter lattice $Y_{\GL_r}$ of $\mbf{T}$, and $\set{e_1^*, e_2^*, ..., e_r^*}$ a basis for the character lattice of $\mbf{T}$ such that the pairing $\angb{e_i^*}{e_j}=\delta_{ij}$.  Here $\Phi_{\GL_r}^\vee$ and $\Phi_{\GL_r}$ denote the coroots and roots respectively. Denote $\alpha_i^\vee:=e_i- e_{i+1}$ and $\alpha_i:= e_i^* - e_{i+1}^*$. We choose simple coroots $\Delta_{\GL_r}^\vee=\set{\alpha_i^\vee: 1\le i\le r-1}$ and simple roots $\Delta_{\GL_r}=\set{\alpha_i: 1\le i\le r-1}$.  Let $\mbf{B}=\mbf{T} \mbf{U}$ be the Borel subgroup associated with $\Delta_{\GL_r}$.
Let
$$\set{ e_{\alpha}: \mbf{G}_\text{add} \to \mbf{U}_\alpha}_{\alpha \in \Phi_{\GL_r}}$$
be a Chevalley-Steinberg system of pinnings for $\mbf{GL}_r$. Denote by $W_{\GL_r}$ the Weyl group of $(\mbf{GL}_r, \mbf{T})$ which we identify with the Weyl group of the coroot system. In particular, $W_{\GL_r}$ is generated by simple reflections $\set{\w_\alpha: \alpha^\vee \in \Delta_{\GL_r}^\vee}$ for $Y_{\GL_r}\otimes \Q$.

The isomorphism classes of $\mbf{K}_2$-extensions of $\mbf{GL}_r$ incarnated by the pairs $(D,\eta)$ (cf. \cite{GG}, \cite{Ga2}) are determined by Weyl-invariant integer-valued quadratic forms on $Y_{\GL_r}$. Let $Q$ be such a quadratic form and denote by $B_Q(y_1,y_2):=Q(y_1+y_2)-Q(y_1)-Q(y_2)$ the associated bilinear form. For $\mbf{GL}_r$, any Weyl-invariant integer-valued bilinear form $B_Q$ is determined by
\begin{equation} \label{BQ}
B_Q(e_i, e_i)=2\p \text{ and } B_Q(e_i, e_j)=\bq \text{ if } i\ne j,
\end{equation}
where $\p, \bq \in \Z$ are any two integers. For any coroot $\alpha^\vee \in \Phi^\vee_{\GL_r}$, one has $$Q(\alpha^\vee)=2\p-\bq.$$ 

\subsection{Families of $\mbf{K}_2$-extensions of $\mbf{GL}_r$}
We write $\wm{GL}_r[\p, \bq]$ for the isomorphism class of $\mbf{K}_2$-extension of $\mbf{GL}_r$ arising from the bilinear form with parameters $\p, \bq$ as in (\ref{BQ}). For any integral equation
$$\text{(EQ)}: \quad A\p+B\bq=C,$$
where $A, B, C\in \Z$, denote by $\wm{GL}_r\text{(EQ)}$ the family of all coverings $\wm{GL}_r[\p, \bq]$ with $(\p, \bq)$ satisfying (EQ). It is natural to take $A=2, B=-1$ since $Q(\alpha^\vee)=2\p-\bq$. There are three families of extensions we would like to highlight.

\subsubsection{The twistor coverings} \label{S:TC}
Consider the integral equation
$$\text{(TC)}: \quad 2\p+(-1)\bq=0$$
and the arising family $\wm{GL}_r(\text{TC})$. An alternative description of $\wm{GL}_r[\p,2\p]$ in the family is as the pull-back from $\wm{GL}_1[\p]$ parametrized by $Q(e_1)=\p$ via the determinant map:
$$\xymatrix{
\mbf{K}_2 \ar@{^(->}[r] & \wm{GL}_1[\p] \ar@{>>}[r] & \mbf{GL}_1 \\
\mbf{K}_2 \ar@{^(->}[r] \ar[u]^-= & \wm{GL}_r[\p, 2\p] \ar@{>>}[r] \ar[u] & \mbf{GL}_r \ar[u]^-{\text{det}}.
}$$
%$$\begin{tikzcd}
%\mbf{K}_2 \ar[r, hook] & \wm{GL}_1[\p] \ar[r, two heads] & \mbf{GL}_1 \\
%\mbf{K}_2 \ar[r, hook] \ar[u, equal] & \wm{GL}_r[\p, 2\p] \ar[r, two heads] \ar[u] & \mbf{GL}_r \ar[u, "\text{det}"].
%\end{tikzcd}$$
%\xymatrix{
%\mu_n \ar@{^(->}[r] & \wt{Y}_{Q,n} \ar@{>>}[r] & Y_{Q,n}
%}
Any extension $\wm{GL}_r[\p,2\p]$ restricted to $\mbf{SL}_r \subseteq \mbf{GL}_r$ gives rise to the trivial extension $\mbf{K}_2\times \mbf{SL}_r$ over $\mbf{SL}_r$.

\subsubsection{The Kazhdan-Patterson coverings} \label{S:KP}
Let (KP) be the integral equation
$$\text{(KP)}: \quad 2\p+(-1)\bq=-1,$$
from which we obtain a family $\wm{GL}_r(\text{KP})$. The $n$-fold covering groups arising from this family are exactly those studied by Kazhdan-Patterson (cf. \cite{KP}). The parameter $\p$ is just the twisting parameter $c$ in the notation of \cite{KP}. This family is the most widely studied one among all Brylinski-Deligne extensions of $\mbf{GL}_r$.

The family $\wm{GL}_r(\text{KP})$ restricts to give the same extension over $\mbf{SL}_r$ with fundamental invariant $Q(\alpha^\vee)=-1$ for any coroot $\alpha^\vee$ of $\mbf{SL}_r$. The group $\wm{GL}_r[0, 1]$ is the untwisted covering in \cite{KP} (since $\p=0$), and is also the focus of the earlier works of \cite{GHPS}, \cite{Suz1}, \cite{Suz2} and \cite{Suz3} etc.

On the other hand, let $f: \mbf{GL}_r \to \mbf{SL}_{r+1}$ be the embedding given by $g\mapsto (g, \det(g)^{-1})$. Let $\wm{SL}_{r+1}$ be the extension with  invariant $Q(\alpha^\vee)=-1$ for any coroot $\alpha^\vee$. Then
$$\wm{GL}_r[-1, -1] \simeq f^*(\wm{SL}_{r+1}),$$
the pull-back of $\wm{SL}_{r+1}$ to $\mbf{GL}_r$. Thus $\wm{GL}_r[-1, -1]$ arises from $\wm{GL}_r[0, 1]$ ``twisted" by the twistor covering group $\wm{GL}_r[1, 2]$ in the Brylinski-Deligne category. Here by twisting, we mean the addition of bilinear forms.

%% subsection %%
\subsubsection{The Savin coverings} \label{S:Savin}
Let (S) be the integral equation
$$\text{(S)}: \quad 2\p+(-1)\bq=-2,$$
which gives rise to the family $\wm{GL}_r(\text{S})$. Again, there are two natural extensions in the family. The first one is $\wm{GL}_r[0, 2]$, which conveniently might be viewed as the untwsited covering in the family $\wm{GL}_r(\text{S})$. On the other hand,  let $h: \mbf{GL}_r \to \mbf{Sp}_{2r}$ be the usual embedding of $\mbf{GL}_r$ as the Siegel Levi subgroup of $\mbf{Sp}_{2r}$. Let $\wm{Sp}_{2r}$ be the $\mbf{K}_2$-extension with fundamental invariant $Q(\alpha^\vee)=-1$ for any short coroot $\alpha^\vee$ of $\mbf{Sp}_{2r}$. Then
$$\wm{GL}_r[-1, 0] \simeq h^*(\wm{Sp}_{2r}),$$
which is also obtained from $\wm{GL}_r[0, 2]$ by a twist by the covering $\wm{GL}_r[1, 2]$ in the Brylinski-Deligne category. 

The fact $\bq=0$ for $\wm{GL}_r[-1,0]$ accounts for the block-commutativity for the covering Levi subgroups of $\wm{GL}_r[-1, 0]$. The group $\wm{GL}_r[-1, 0]$, to the best of our knowledge, is firstly studied by G. Savin (cf. \cite{Sa}). Therefore, in this paper by \emph{special} Savin covering we will refer to $\wm{GL}_r[-1,0]$. 

Note also that any member in $\wm{GL}_r(\text{S})$ restricts to give the same extension over $\mbf{SL}_r$ with fundamental invariant $Q(\alpha^\vee)=-2$.

\subsection{Topological covers}
For simplicity of notation, we will write $\wm{GL}_r$ for $\wm{GL}_r[\p,\bq]$ with the structural parameters $\p$ and $\bq$ understood. A topological covering group, in the sense of Weissman \cite{We2}, is simply a pair $(\wm{GL}_r, n)$. More explicitly, $(\wm{GL}_r, n)$ gives rise to an $n$-fold topological covering group $\wt{\GL}_r$ of $\GL_r:=\mbf{GL}_r(F)$, from the push-out of the $n$-th Hilbert symbol $(-,-)_n: \mbf{K}_2(F) \to \bbmu_n$. We will also write $\wt{\GL}_r^{(n)}$ whenever we would like to emphasize the degree of covering. Let $\wt{T}$ be the covering torus of $T:=\mbf{T}(F)$, and $\wt{B}$ the covering of the Borel subgroup $B\subset \GL_r$. We identify $\bbmu_n$ as a subgroup of $\C^\times$ by an embedding $\iota: \bbmu_n \into \C^\times$.

Let $K \subseteq \GL_r$ be the maximal compact subgroup arising from the pinnings $\set{e_\alpha: \alpha\in \Phi_{\GL_r}}$. Since by assumption $\text{gcd}(p, n)=1$, the covering group $\wt{\GL}_r$ splits over $K$ (cf. \cite{GG}), and we fix such a splitting. To describe $\wt{\GL}_r$, we choose (without loss of generality on the isomorphism class) a bisector $D$ of the symmetric bilinear form $B_Q$ as follows:
\begin{equation} \label{F:D}
D(e_i, e_j) =
\begin{cases}
0 & \text{ if } i< j, \\
Q(e_i) & \text{ if } i=j, \\
B_Q(e_i, e_j) & \text{ if } i> j.
\end{cases}
\end{equation}
We can describe the group structure of $\wt{\GL}_r$ as follows.

First, the group $\wt{\GL}_r$ splits canonically over any unipotent element of $\GL_r$. Denote by $\wt{e}_\alpha(a) \in \wt{\GL}_r$ for $\alpha\in \Phi_{\GL_r}$ and $a\in F$ the canonical lifting of $e_\alpha(a) \in \GL_r$. For $\alpha\in \Phi_{\GL_r}$ and $a\in F^\times$, define 
\begin{equation} \label{F:w}
w_\alpha(a):=e_{\alpha}(a) \cdot e_{-\alpha}(-a^{-1}) \cdot e_{\alpha}(a) \text{ and } \wt{w}_\alpha(a):=\wt{e}_{\alpha}(a) \cdot \wt{e}_{-\alpha}(-a^{-1}) \cdot \wt{e}_{\alpha}(a).
\end{equation}
This gives a natural representative $w_\alpha:=w_\alpha(1)$ in $K$, and also $\wt{w}_\alpha:=\wt{w}_\alpha (1)$ in $\wt{K}$, of the Weyl element $\w_\alpha\in W_{\GL_r}$. Moreover, for any $h_\alpha(a):=\alpha^\vee(a)\in T$, there is a natural lifting 
\begin{equation} \label{h-alpha}
\wt{h}_\alpha(a):=\wt{w}_\alpha(a)\cdot \wt{w}_\alpha(-1) \in \wt{T},
\end{equation}
which depends only on the pinnings and the canonical unipotent splitting.

Second, there is a section $\s$ of $T$ into $\wt{T}$ such that the group law on $\wt{T}$ is given by
\begin{equation} \label{F:s}
\s(y_1(a)) \cdot \s(y_2(b)) = (a, b)_n^{D(y_1, y_2)} \cdot \s(y_1(a)\cdot y_2(b)).
\end{equation}
Moreover, for the natural lifting $\wt{h}_\alpha(a)$ of $h_\alpha(a)$ above, one has
\begin{equation*} 
\wt{h}_\alpha(a)= \s(h_\alpha(a)) \in \wt{T}.
\end{equation*}
We could assume that the restriction of $\s$ on $T\cap K$ agrees with the splitting of $K$ into $\wt{\GL}_r$. For convenience, for any $y\in Y_{\GL_r}$ we will use the notation
$$\s_y:=\s(y(\varpi)) \in \wt{T}.$$

Third, let $w_\alpha \in K$ be the above natural representative of $\w_\alpha\in W_{\GL_r}$. For any $\wt{y(a)} \in \wt{T}$, one has
\begin{equation} \label{F:W-act}
w_\alpha \cdot \wt{y(a)} \cdot w_\alpha^{-1} = \wt{y(a)} \cdot \wt{h}_\alpha(a^{-\angb{y}{\alpha}}),
\end{equation}
where $\angb{-}{-}$ is the paring between $Y_{\GL_r}$ and $X_{\GL_r}$.

\vskip 5pt
The following result is one of the main reasons for our assumption $\bbmu_{2n} \subseteq F^\times$.
\begin{lm}  \label{L:2n}
Let $\w=\w_k ... \w_2 \w_1 \in W_{\GL_r}$ be a minimal decomposition of $\w\in W_{\GL_r}$ with each $\w_i=\w_{\alpha_i}$ for some $\alpha_i \in \Delta_{\GL_r}$. Let $w:=w_k ... w_2 w_1 \in K$ be the representative of $\w$, where $w_i$ is as in (\ref{F:w}). Then we have
$$w\cdot \s_y \cdot w^{-1} = \s_{\w(y)}$$
for all $y\in Y_{\GL_r}$.
\end{lm}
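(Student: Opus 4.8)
The plan is to reduce the statement to a single-reflection computation and then induct on the length $k$ of the minimal decomposition. The key conjugation formula available is \eqref{F:W-act}, which tells us that $w_\alpha \cdot \s_y \cdot w_\alpha^{-1} = \s_y \cdot \wt{h}_\alpha(\varpi^{-\angb{y}{\alpha}})$. On the other hand, $\s_{\w_\alpha(y)} = \s(\w_\alpha(y)(\varpi))$, and since $\w_\alpha(y) = y - \angb{y}{\alpha}\alpha^\vee$, we may use the cocycle relation \eqref{F:s} together with $\wt{h}_\alpha(a) = \s(h_\alpha(a))$ to expand $\s_{\w_\alpha(y)}$ in terms of $\s_y$ and $\s(h_\alpha(\varpi^{-\angb{y}{\alpha}}))$. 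Comparing the two sides, the difference between $w_\alpha \s_y w_\alpha^{-1}$ and $\s_{\w_\alpha(y)}$ will be a power of a Hilbert symbol of the form $(\varpi, \varpi)_n$ or $(-1, \varpi)_n$ raised to an integer built out of $D$-values and $\angb{y}{\alpha}$. First I would check that, under the hypothesis $\bbmu_{2n} \subseteq F^\times$, both $(\varpi,\varpi)_n$ and $(-1,\varpi)_n$ are trivial: indeed $(-1,\varpi)_n = (-1)^{?}$ is controlled by whether $-1$ is an $n$-th power, which follows from $\bbmu_{2n}\subseteq F^\times$, and $(\varpi,\varpi)_n = (-1,\varpi)_n$ by the standard identity $(a,a)_n = (-1,a)_n$. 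This is precisely the point where the assumption $\bbmu_{2n}\subseteq F^\times$ (rather than merely $\bbmu_n$) is used. So the base case $k=1$ holds.

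For the inductive step, write $\w = \w_k \cdots \w_1$ and set $\w' = \w_{k-1}\cdots\w_1$, $w' = w_{k-1}\cdots w_1$, so $w = w_k w'$. By the inductive hypothesis, $w' \s_y (w')^{-1} = \s_{\w'(y)}$ for all $y$. Then
\[
w \s_y w^{-1} = w_k \big( w' \s_y (w')^{-1} \big) w_k^{-1} = w_k \s_{\w'(y)} w_k^{-1} = \s_{\w_k(\w'(y))} = \s_{\w(y)},
\]
where the third equality is the $k=1$ case applied to the element $\w'(y) \in Y_{\GL_r}$. The only subtlety here is that the base case must be available for \emph{every} lattice element, not just for $y$ in some restricted set — but that is exactly how I will phrase the $k=1$ assertion, so the induction goes through cleanly.

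The main obstacle I anticipate is bookkeeping in the base case: one must carefully track that when $\w_\alpha(y) = y - m\alpha^\vee$ with $m = \angb{y}{\alpha}$, the product $\s_y \cdot \s(h_\alpha(\varpi^{-m}))$ differs from $\s(\w_\alpha(y)(\varpi))$ by exactly $(\varpi,\varpi)_n$ (or $(-1,\varpi)_n$) to a power that is a $\Z$-linear combination of $D(y, m\alpha^\vee)$, $D(m\alpha^\vee, y)$, $Q(m\alpha^\vee)$-type terms, and then confirm each such Hilbert symbol vanishes. Since $D$ takes integer values and $B_Q$, $Q$ are integral (by \eqref{BQ} and \eqref{F:D}), all exponents are integers, so no fractional-power issues arise; the whole computation collapses once $(-1,\varpi)_n = 1$ is established. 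A secondary, purely cosmetic point is checking that the representative $w$ built from a minimal decomposition is independent of the chosen decomposition only up to the unipotent splitting — but since the statement fixes a particular decomposition $\w = \w_k\cdots\w_1$, this is not actually needed for the proof as stated.
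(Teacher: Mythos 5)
Your proposal is correct and follows essentially the same route as the paper: the base case $\w=\w_\alpha$ is handled exactly as in the paper's proof via (\ref{F:W-act}), the identification $\wt{h}_\alpha(\varpi^{-\angb{y}{\alpha}})=\s(\varpi^{-\angb{y}{\alpha}\alpha^\vee})$, and the cocycle relation (\ref{F:s}), with the resulting factor $(\varpi,\varpi)_n^{\angb{y}{\alpha}D(y,\alpha^\vee)}$ killed by $(\varpi,\varpi)_n=(-1,\varpi)_n=1$ under $\bbmu_{2n}\subseteq F^\times$, and the inductive step is the same length induction the paper invokes.
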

\begin{proof}
By induction, it suffices to show the case $\w=\w_\alpha$ for some $\alpha\in \Delta_{\GL_r}$. For this, we have
\begin{align*}
& w_\alpha \cdot \s_y \cdot w_\alpha^{-1} \\
=& \s(\varpi^y) \cdot \wt{h_\alpha}(\varpi^{-\angb{y}{\alpha}}) \text{ by (\ref{F:W-act}) } \\
=& \s(\varpi^y) \cdot \s(\varpi^{-\angb{y}{\alpha}\alpha^\vee}) \\
=& (\varpi, \varpi)_n^{\angb{y}{\alpha} \cdot D(y, \alpha^\vee)} \cdot \s_{\w_\alpha(y)} \text{ by (\ref{F:s})} \\
=& \s_{\w_\alpha(y)},
\end{align*}
where the last equality follows from $(\varpi, \varpi)_n=1$, since we have assumed $\bbmu_{2n} \subseteq F^\times$.
\end{proof}

\subsection{Dual group}
From the bilinear form $B_Q$ in (\ref{BQ}), we consider the lattice $Y_{\GL_r, Q,n}$ given by
$$Y_{\GL_r, Q, n}:=\set{y\in Y_{\GL_r}: B_Q(y, e_i) \in n\Z \text{ for all } i} \subseteq Y_{\GL_r}.$$
An explicit computation gives
\begin{equation} \label{YQn}
Y_{\GL_r, Q,n }=\set{\sum_{i=1}^r k_i e_i \in Y_{\GL_r}:  \ Q(\alpha^\vee) \cdot k_j + \bq\cdot \Big(\sum_{i=1}^r k_i \Big) \in  n\Z \text{ for all } j}.
\end{equation}
For $\alpha\in \Phi_{\GL_r}$, write 
$$n_\alpha:=\frac{n}{\text{gcd}(n, Q(\alpha^\vee))}, \quad \alpha_{Q,n}^\vee=n_\alpha \cdot \alpha^\vee, \quad \alpha_{Q,n}=n_\alpha^{-1} \alpha.$$
Let $Y_{\GL_r, Q,n}^{sc} \subseteq Y_{\GL_r, Q,n}$ be the sublattice generated by $\set{\alpha^\vee_{Q,n}: \alpha\in \Phi_{\GL_r}}$.

\vskip 5pt
The complex dual group $\wt{\GL}_r^\vee$ for $\wt{\GL}_r$ has root data (cf. \cite{FL},  \cite{Mc1}, \cite{Re} and \cite{We2}) 
$$\left( Y_{\GL_r, Q,n}, \ \set{\alpha_{Q,n}^\vee}; \ \text{Hom}_\Z(Y_{\GL_r, Q,n}, \Z),\  \set{\alpha_{Q,n}} \right).$$
In particular, $Y_{\GL_r,Q,n}$ is the character lattice and $Y_{\GL_r, Q,n}^{sc}$ the root lattice for $\wt{\GL}_r^\vee$.

\begin{eg} \label{E:KP}
(Kazhdan-Patterson coverings) Consider the Kazhdan-Patterson $n$-fold covering group $\wt{\GL}_r$ with $Q(\alpha^\vee)=2\p-\bq=-1$. In this case, 
one has 
$$Y_{\GL_r, Q, n}=\set{\sum_{i=1}^r k_i e_i: \ k_1 \equiv k_2 \equiv ... \equiv k_r \mod n, \text{ and } n| (\bq r-1)k_i \text{ for all } i}.$$ 
The dual for $\wt{\GL}_r$ is given by (cf. \cite{GG})
$$\wt{\GL}_r^\vee \simeq \set{(g, \lambda) \in  \mbf{GL}_r \times \mbf{GL}_1: \det(g)= \lambda^{\gcd(\bq r-1, n)}} \subset \mbf{GL}_r \times \mbf{GL}_1.$$
In general, the dual group is not $\mbf{GL}_r$. However, if $\gcd(\bq r-1, n)=1$ (for example when $n=r$), then the dual group is $\mbf{GL}_r$. For more details, we refer the reader to \cite{GG}.
\end{eg}

\begin{eg} \label{E:S}
(Savin coverings)
Now we consider the special Savin $n$-fold covering group $\wt{\GL}_r$ arising from $\wm{GL}_r[-1, 0]$, i.e. $\p=-1$ and $\bq=0$. In this case, $Y_{\GL_r, Q, n}=n_\alpha \cdot Y_{\GL_r}$ and the dual group for such $\wt{\GL}_r$ is always $\mbf{GL}_r$. From this, it is conceivable that the special Savin coverings should behave better and are more accessible to investigation.
\end{eg}

\subsection{Fundamental pair}
For $r < R$, denote by 
$$\phi: \mbf{GL}_r \to \mbf{GL}_R$$
the natural embedding such that (by abuse of notation) one has the embedding
$$\phi: Y_{\GL_r} \to Y_{\GL_R}$$
with $\phi(e_i)=e_i$ for $1\le i\le r$.
%Further more, we also denote by $\phi$ the natural embedding of $W_{\GL_r}$ into $W_{\GL_R}$.

\begin{dfn} \label{D:FP}
A pair of degree $n$ covering groups $(\wt{\GL}_r^{(n)}, \wt{\GL}_R^{(n)})$ with $r< R$ is called a fundamental pair if the following conditions are satisfied:
\begin{enumerate}
\item[(\textsf{FP1})] Both $\wt{\GL}_r^{(n)}$ and $\wt{\GL}_R^{(n)}$ arise from a symmetric bilinear form with the same parameters $\p, \bq$ as in (\ref{BQ}). That is, we may assume that the $\mbf{K}_2$-extension $\wm{GL}_r$ is pull-back from the $\mbf{K}_2$-extension $\wm{GL}_R[\p,\bq]$ via the embedding $\phi: \mbf{GL}_r \to \mbf{GL}_R$.
\item[(\textsf{FP2})] The two equalities $n_\alpha=R$ and $Y_{\GL_R, Q,n}=n_\alpha \cdot Y_{\GL_R}$ hold.
\end{enumerate}
\end{dfn}
\vskip 5pt

\begin{rmk}
The conditions we impose here are not the most general ones. For instance, instead of (\textsf{FP1}) we could simply require that the \emph{topological} cover $\wt{\GL}_r^{(n)}$ is isomorphic to the pull-back $\phi^*(\wt{\GL}_R^{(n)})$ from $\wt{\GL}_R^{(n)}$. In any case, the conditon ({\sf FP1}) is a formal requirement on compatibility of the two groups. The second condition ({\sf FP2}) plays a key role in the paper.
\end{rmk}
\vskip 10pt

It follows from property (\textsf{FP2}) that the dual group of $\wt{\GL}_R^{(n)}$ from a fundamental pair has root data
$$\left( n_\alpha Y_{\GL_R}, \quad \set{\alpha_{Q,n}^\vee}, \quad n_\alpha^{-1} X_{\GL_R}, \quad \set{\alpha_{Q,n}} \right).$$
Thus, the dual group for the $\wt{\GL}_R^{(n)}$ is always $\mbf{GL}_R$. However, in general the dual group of $\wt{\GL}_r^{(n)}$ in a fundamental pair is not $\mbf{GL}_r$.

\begin{eg}
First, there does not exist any fundamental pair $(\wt{\GL}_r^{(n)}, \wt{\GL}_R^{(n)})$ from twistor coverings, for which one always has $n_\alpha=1$.

Second, any pair of Kazhdan-Patterson covering groups $(\wt{\GL}_r^{(n)}[\p, \bq], \wt{\GL}_n^{(n)}[\p, \bq])$ with $r< n$, where $2\p-\bq=-1$, is a fundamental pair. This is the context for the works of \cite{BH1}, \cite{BF}, \cite{Suz1}-\cite{Suz3}.

Third, consider the special Savin covering group $\wt{\GL}_R^{(n)}$ with $\bq=0$ and $R=n/\text{gcd}(2, n)$. Then, $(\wt{\GL}_r^{(n)}, \wt{\GL}_R^{(n)})$ is a fundamental pair for any $r< R$, where $\wt{\GL}_r^{(n)}$ is the special Savin covering group with $\bq=0$.
\end{eg}

In fact, we have a complete understanding of possible fundamental pairs as follows.
\begin{prop} \label{P:key-d}
Let $\wt{\GL}_r^{(n)}[\p,\bq], r\ge 1$ be an arbitrary covering group with structure parameters $\p$ and $\bq$. Then, it fits into a fundamental pair $(\wt{\GL}_r^{(n)}[\p,\bq],\ \wt{\GL}_R^{(n)}[\p,\bq])$ if and only if 
\begin{enumerate}
\item[$\bullet$] $n_\alpha > r$, and
\item[$\bullet$] $n| (\bq \cdot n_\alpha)$.
\end{enumerate}
If the above conditions are satisfied, then $\wt{\GL}_R^{(n)}[\p,\bq])$ is uniquely determined. For a fundamental pair $(\wt{\GL}_r^{(n)}, \wt{\GL}_R^{(n)})$, one always has
\begin{equation} \label{F:nY}
Y_{\GL_r, Q,n}^{sc} \subseteq n_\alpha \cdot Y_{\GL_r} \subseteq Y_{\GL_r, Q, n}.
\end{equation}
\end{prop}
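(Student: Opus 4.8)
The plan is to reduce the entire statement to the explicit description (\ref{YQn}) of $Y_{\GL_R, Q, n}$ together with one elementary divisibility observation. Since for $\mbf{GL}_r$, and equally for $\mbf{GL}_R$, every coroot has the same invariant $Q(\alpha^\vee) = 2\p - \bq$, the integer $n_\alpha = n/\gcd(n, 2\p-\bq)$ depends only on $\p$ and $\bq$; by (\textsf{FP1}) it is thus the \emph{same} integer whether read off from $\wt{\GL}_r^{(n)}$ or from $\wt{\GL}_R^{(n)}$. The observation I will use repeatedly is that, for $m\in\Z$, one has $n \mid Q(\alpha^\vee)\,m$ if and only if $n_\alpha \mid m$. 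For an element $\sum_i k_i e_i$ of the ambient cocharacter lattice I abbreviate $\sigma := \sum_i k_i$.

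\textbf{Necessity and uniqueness.} Suppose $(\wt{\GL}_r^{(n)}[\p,\bq],\wt{\GL}_R^{(n)}[\p,\bq])$ is a fundamental pair. Condition (\textsf{FP2}) for $\wt{\GL}_R^{(n)}$ gives at once $n_\alpha = R$, hence $n_\alpha = R > r$ and, since $r\ge 1$, also $R \ge 2$. Using (\textsf{FP2}) again, $n_\alpha e_1 \in Y_{\GL_R, Q, n}$, so pairing with $e_2$ (which exists as $R\ge 2$) forces $B_Q(n_\alpha e_1, e_2) = \bq\,n_\alpha \in n\Z$, which is the second asserted condition. The same argument already shows $R = n_\alpha$ is forced, and by (\textsf{FP1}) the structure parameters of $\wt{\GL}_R^{(n)}$ agree with those of $\wt{\GL}_r^{(n)}$; hence $\wt{\GL}_R^{(n)}[\p,\bq]$ is uniquely determined.

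\textbf{Sufficiency.} Conversely assume $n_\alpha > r$ and $n\mid\bq\,n_\alpha$, set $R := n_\alpha$, and take $\wt{\GL}_R^{(n)}[\p,\bq]$. Because the bilinear form on $Y_{\GL_r}$ is the restriction along $\phi$ (with $\phi(e_i)=e_i$) of the one on $Y_{\GL_R}$, and $\mbf{K}_2$-extensions are classified by their Weyl-invariant quadratic forms, $\wm{GL}_r[\p,\bq]$ is the pull-back of $\wm{GL}_R[\p,\bq]$; so (\textsf{FP1}) holds, and $n_\alpha = R$ is immediate. It remains to verify $Y_{\GL_R, Q, n} = n_\alpha Y_{\GL_R}$. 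The inclusion $\supseteq$ is direct from (\ref{YQn}): if $n_\alpha\mid k_i$ for all $i$, then $Q(\alpha^\vee)k_j \in n\Z$ and $\bq\sigma \in \bq\,n_\alpha\Z \subseteq n\Z$. For $\subseteq$, let $\sum_i k_i e_i \in Y_{\GL_R, Q, n}$; subtracting the congruence of (\ref{YQn}) for an index $j'$ from that for $j$ gives $n\mid Q(\alpha^\vee)(k_j-k_{j'})$, hence $n_\alpha\mid(k_j-k_{j'})$, so all the $k_i$ are congruent modulo $n_\alpha$; as there are exactly $R = n_\alpha$ of them, $\sigma \equiv n_\alpha k_1 \equiv 0 \pmod{n_\alpha}$, whence $\bq\sigma \in n\Z$; feeding this back into the congruence for $j$ yields $Q(\alpha^\vee)k_j \in n\Z$, i.e. $n_\alpha\mid k_j$. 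This proves (\textsf{FP2}).

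\textbf{The inclusions, and the main obstacle.} For a fundamental pair, $n\mid\bq\,n_\alpha$ holds by the necessity step, so the right-hand inclusion $n_\alpha Y_{\GL_r} \subseteq Y_{\GL_r, Q, n}$ in (\ref{F:nY}) is the $\supseteq$ computation above transcribed to $\mbf{GL}_r$: $B_Q(n_\alpha e_j, e_i) = \bq\,n_\alpha \in n\Z$ for $i\ne j$, and $B_Q(n_\alpha e_j, e_j) = 2\p\,n_\alpha = Q(\alpha^\vee)n_\alpha + \bq\,n_\alpha \in n\Z$. The left-hand inclusion $Y_{\GL_r, Q, n}^{sc} \subseteq n_\alpha Y_{\GL_r}$ is immediate, since $Y_{\GL_r, Q, n}^{sc}$ is generated by the $\alpha^\vee_{Q,n} = n_\alpha\alpha^\vee$ with $\alpha^\vee \in Y_{\GL_r}$. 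The only point needing genuine (if short) thought is the inclusion $\subseteq$ in the sufficiency step: it is there that the precise equality $R = n_\alpha$ — not merely $R > r$ — enters, through the collapse $\sigma \equiv 0 \pmod{n_\alpha}$ that lets $n\mid\bq\,n_\alpha$ annihilate the $\bq\sigma$ term; everything else is bookkeeping with (\ref{YQn}) and the divisibility observation.
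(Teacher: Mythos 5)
Your proof is correct and follows essentially the same route as the paper: both directions rest on the explicit congruence description of $Y_{\GL_{n_\alpha},Q,n}$ (equivalently the defining condition $B_Q(y,e_i)\in n\Z$), extracting $n\mid \bq\, n_\alpha$ from $n_\alpha e_1\in Y_{\GL_R,Q,n}$, and conversely showing all coordinates of an element of $Y_{\GL_{n_\alpha},Q,n}$ are congruent mod $n_\alpha$ so that the sum term is killed by $n\mid \bq\, n_\alpha$ precisely because there are $R=n_\alpha$ of them. The only cosmetic difference is that you subtract the congruences for two indices where the paper writes $k_i=n_\alpha x_i+m$, which is the same argument.
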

\begin{proof}
Fix the covering group $\wt{\GL}_r^{(n)}$ associated with $\p$ and $\bq$. If it fits in a fundamental pair, then from definition, $\wt{\GL}_R^{(n)}$ is uniquely determined and is just $\wt{\GL}_{n_\alpha}^{(n)}[\p,\bq]$. Also, such $\wt{\GL}_R^{(n)}$ exists if and only if $n_\alpha > r$ and $Y_{\GL_{n_\alpha}, Q, n}=n_\alpha \cdot Y_{\GL_{n_\alpha}}$. By (\ref{YQn}), we have
$$Y_{\GL_{n_\alpha}, Q,n }=\set{\sum_{i=1}^{n_\alpha} k_i e_i \in Y_{\GL_{n_\alpha}}:  \ Q(\alpha^\vee) \cdot k_j + \bq\cdot \Big(\sum_{i=1}^{n_\alpha} k_i \Big) \in  n\Z \text{ for all } j}.$$
Now if $Y_{\GL_{n_\alpha}, Q, n}=n_\alpha \cdot Y_{\GL_{n_\alpha}}$, then in particular $n_\alpha e_1 \in Y_{\GL_{n_\alpha}, Q, n}$. Therefore $n|(\bq \cdot n_\alpha)$.

Conversely, assume $n|(\bq\cdot n_\alpha)$. Any element $\sum_{i}^{n_\alpha} k_i e_i$ lies in $Y_{\GL_{n_\alpha}, Q, n}$ if and only if
\begin{enumerate}
\item[$\bullet$] for all $i$, one has $k_i=n_\alpha x_i + m$ for some $x_i$, and
\item[$\bullet$] $Q(\alpha^\vee)(n_\alpha x_j + m) + \bq \cdot \left(n_\alpha (\sum_{i=1}^{n_\alpha} x_i) + n_\alpha \cdot m \right)$ lies in $n\Z$ for all $j$.
\end{enumerate} 
Since $n|(\bq\cdot n_\alpha)$, the second condition is equivalent to $n_\alpha |m$. That is, $Y_{\GL_{n_\alpha}, Q, n} = n_\alpha \cdot Y_{\GL_{n_\alpha}}$ in this case. 

Lastly, for (\ref{F:nY}), it suffices to show $n_\alpha \cdot Y_{\GL_r, Q,n} \subseteq Y_{\GL_r, Q,n}$. However, this follows from a straightforward checking by using (\ref{YQn}) and the condition $n|(\bq\cdot n_\alpha)$. This completes the proof.
\end{proof}

\vskip 5pt
\subsection{Weyl action} \label{S:W-act}
We denote by $\w(y)$ the usual Weyl action of $\w\in W_{\GL_r}$ on $y\in Y_{\GL_r}$. Let
$$\rho_{\GL_r}=\frac{1}{2} \sum_{\substack{\alpha^\vee \in \Phi_{\GL_r}^\vee\\ \alpha^\vee >0}} \alpha^\vee$$
be the half sum of the positive coroots. For any $y\in Y_{\GL_r}$, we will write in this paper
$$y_\rho:=y-\rho_{\GL_r} \in Y_{\GL_r}\otimes \Q.$$
By transport of structure, one has the twisted action
$$\w[y]:=\w(y-\rho_{\GL_r}) + \rho_{\GL_r} \in Y_{\GL_r}.$$
For any $y\in Y_{\GL_r}$, we write $\mca{O}_y$ for the Weyl-orbit of $y$  with respect to this twisted action. Let $\OF$ be the set of all free Weyl-orbits in $Y_{\GL_r}$.

Let  $\wp^{sc}: Y_{\GL_r} \to Y_{\GL_r}/Y_{\GL_r,Q,n}^{sc}$ and
\begin{equation} \label{F:p-map}
\wp: Y_{\GL_r} \to Y_{\GL_r}/Y_{\GL_r,Q,n}
\end{equation}
be the two quotient maps. We call $\mca{O}_y$ a $Y_{\GL_r,Q,n}$-free (resp. $Y_{\GL_r, Q,n}^{sc}$-free ) orbit if 
$|\mca{O}_y|=|\wp(\mca{O}_y)|$ (resp. $|\mca{O}_y|=|\wp^{sc}(\mca{O}_y)|$). Write
$$\OF_{Q,n}:=\set{\mca{O}_y \in \OF: \mca{O}_y \text{ is $Y_{\GL_r, Q,n}$-free } }$$
and 
$$\OF_{Q,n, sc}:=\set{\mca{O}_y \in \OF: \mca{O}_y \text{ is $Y_{\GL_r, Q,n}^{sc}$-free } }.$$
 
\vskip 5pt
For $r< R$, by virtue of the embedding $\phi: \mbf{GL}_r \to \mbf{GL}_R$, we have the natural embedding
$$\phi: W_{\GL_r} \to W_{\GL_R}.$$
For any $\w \in W_{\GL_r}$ and $y\in Y_{\GL_r}$, we write $\phi(\w)$ and $\phi(y)$ respectively to emphasize that we view them as associated to $\mbf{GL}_R$. Since $\rho_{\GL_R} -\rho_{\GL_r}$ is fixed by any $\w \in W_{\GL_r}$, it follows that
$$\w(y-\rho_{\GL_r})+\rho_{\GL_r}=\phi(\w)(y-\rho_{\GL_R}) + \rho_{\GL_R};$$
or equivalently, 
$$\w[y]=\phi(\w)[y].$$
Therefore, $\phi: Y_{\GL_r} \to Y_{\GL_R}$ is equivariant with respect to the twisted $W_{\GL_r}$-actions.

\vskip 5pt
Let $\w_{0,r}$ and $\w_{0,R}$ be the longest Weyl element of $W_{\GL_r}$ and $W_{\GL_R}$ respectively. For any $\w \in W_{\GL_r}$, we write
$$\widehat{\w}:=\w_{0,r} \cdot \w \cdot \w_{0,r}^{-1}  \in W_{\GL_r}.$$
We will also use the notation
$$\widehat{\phi(\w)}:= \w_{0,R} \cdot \phi(\w) \cdot \w_{0,R}^{-1} \in W_{\GL_R}.$$
Similarly, for any $y\in Y_{\GL_r}$, denote
$$\widehat{y}:=\w_{0,r}(y) \in Y_{\GL_r}, \quad \widehat{\phi(y)}:=\w_{0,R}(y) \in Y_{\GL_R}.$$
There will be no confusion for the notations. 

For any $\alpha^\vee \in \Delta_{\GL_r}^\vee$ and associated $\w_\alpha \in W_{\GL_r}$, the first lemma below is immediate.

\begin{lm} \label{L:01}
For any $\alpha^\vee \in \Delta_{\GL_r}^\vee$, we have $\widehat{\w_\alpha}=\w_\beta$ where $\beta^\vee=-\w_{0,r}(\alpha^\vee)\in \Delta_{\GL_r}^\vee$.
\end{lm}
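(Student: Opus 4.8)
The claim is a purely combinatorial identity in the Weyl group $W_{\GL_r}$, which under the standard identification is the symmetric group $S_r$ acting on $Y_{\GL_r} = \bigoplus_i \Z e_i$ by permuting the $e_i$. The plan is to reduce the statement to explicit descriptions of $\w_{0,r}$, $\w_\alpha$, and the simple coroots. Recall that $\w_{0,r}$ is the permutation $i \mapsto r+1-i$, and that for a simple coroot $\alpha_i^\vee = e_i - e_{i+1}$ the reflection $\w_{\alpha_i}$ is the transposition $(i\ \ i{+}1)$. Conjugation in $S_r$ relabels: $\w_{0,r} \cdot (i\ \ i{+}1) \cdot \w_{0,r}^{-1}$ is the transposition $(\w_{0,r}(i)\ \ \w_{0,r}(i{+}1)) = (r{+}1{-}i \ \ \ r{-}i)$, which is again a simple reflection, namely $\w_{\alpha_{r-i}}$. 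So first I would fix notation, write $\alpha = \alpha_i$, and carry out this conjugation computation.

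The next step is to check that the index $r-i$ corresponds precisely to the coroot $\beta^\vee = -\w_{0,r}(\alpha_i^\vee)$. We compute $\w_{0,r}(\alpha_i^\vee) = \w_{0,r}(e_i - e_{i+1}) = e_{r+1-i} - e_{r-i} = -\alpha_{r-i}^\vee$, so $\beta^\vee = -\w_{0,r}(\alpha_i^\vee) = \alpha_{r-i}^\vee \in \Delta_{\GL_r}^\vee$, which is indeed a simple coroot (note $1 \le i \le r-1$ forces $1 \le r-i \le r-1$, so the index stays in range and the minus sign is exactly what is needed to land back in the positive simple system rather than its negative). Matching this against the conjugation computation from the first step gives $\widehat{\w_\alpha} = \w_{\alpha_{r-i}} = \w_\beta$, which is the assertion.

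An alternative, slightly more structural route avoids coordinates: $\w_{0,r}$ sends the positive system $\Phi_{\GL_r}^\vee{}^+$ to its negative, and hence induces an automorphism of the Dynkin diagram via $\alpha^\vee \mapsto -\w_{0,r}(\alpha^\vee)$ on simple coroots; under this automorphism simple reflections are permuted by the rule $\w_\alpha \mapsto \w_{0,r}\w_\alpha \w_{0,r}^{-1}$, which is exactly $\widehat{\w_\alpha}$, and unwinding the definitions identifies the target as $\w_\beta$ with $\beta^\vee = -\w_{0,r}(\alpha^\vee)$. For $\GL_r$ (type $A_{r-1}$) this diagram automorphism is the flip $i \leftrightarrow r-i$, recovering the same answer.

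There is essentially no obstacle here — the statement is elementary and the excerpt itself flags it as "immediate." The only thing to be careful about is bookkeeping with the sign: one must verify that $-\w_{0,r}(\alpha^\vee)$, and not $\w_{0,r}(\alpha^\vee)$, is the simple coroot (the latter being its negative), and that conjugation by $\w_{0,r}$ as opposed to $\w_{0,r}^{-1}$ produces $\w_\beta$ rather than $\w_{\beta'}$ for some other $\beta'$; since $\w_{0,r}$ is an involution these coincide, so no real ambiguity arises. I would present the coordinate computation as the main argument, as it is the shortest and most transparent.
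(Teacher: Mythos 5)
Your proposal is correct: the coordinate computation ($\w_{0,r}(e_i)=e_{r+1-i}$, so $-\w_{0,r}(\alpha_i^\vee)=\alpha_{r-i}^\vee$ and $\w_{0,r}\w_{\alpha_i}\w_{0,r}^{-1}$ is the transposition $(r-i,\ r+1-i)=\w_{\alpha_{r-i}}$) is exactly the standard verification the paper has in mind, which is why it states the lemma as immediate without proof. Nothing further is needed.
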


\begin{lm} \label{L:02}
For any $y\in Y_{\GL_r}$, the following statements are equivalent:
\begin{enumerate}
\item[(i)] $\widehat{\phi(y)}=\widehat{\phi(\w)}[0] + y'$ for some $\w\in W_{\GL_r}$ and $y'\in n_\alpha \cdot Y_{\GL_R}$;
\item[(ii)] $y=-\w[0] + y_1$ for some $\w \in W_{\GL_r}$ and $y_1 \in n_\alpha \cdot Y_{\GL_r}$;
\item[(iii)] $\widehat{y}=\widehat{\w}[0] + y_2$ for some $\w \in W_{\GL_r}$ and $y_2\in n_\alpha \cdot Y_{\GL_r}$.
\end{enumerate}
If one and thus all of the above hold, then
$$y'=\widehat{\phi(y_1)}, \quad y_2=\widehat{y}.$$
\end{lm}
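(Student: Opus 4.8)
The plan is to prove the equivalence by first unwinding the role of the longest Weyl elements $\w_{0,r}$ and $\w_{0,R}$, so that $\widehat{(-)}$ and $\phi$ interact transparently with the twisted action. The key observation is that $\w_{0,r}$ acts on $Y_{\GL_r}$, via the twisted action, by $\w\mapsto\widehat{\w}$ on Weyl elements and correspondingly sends $\w[0]$ to $\widehat{\w}[0]=\widehat{\w_{0,r}}\,\widehat{\w}\,[\text{something}]$; more concretely, applying $\w_{0,r}$ to the identity $y=-\w[0]+y_1$ gives $\widehat{y}=\w_{0,r}(-\w[0])+\w_{0,r}(y_1)$, and one checks $\w_{0,r}(-\w[0]) = \widehat{\w}[0]$ by a direct computation with $\rho_{\GL_r}$ using $\w_{0,r}(\rho_{\GL_r})=-\rho_{\GL_r}$ (since $\w_{0,r}$ sends positive coroots to negative coroots). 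Because $\w_{0,r}$ preserves the lattice $n_\alpha\cdot Y_{\GL_r}$, we get $y_2=\widehat{y}$ and $y_2\in n_\alpha Y_{\GL_r}$ iff $y_1\in n_\alpha Y_{\GL_r}$, establishing (ii)$\Leftrightarrow$(iii) together with the asserted formula $y_2=\widehat{y}$.

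Next I would handle (i)$\Leftrightarrow$(iii) using the equivariance of $\phi\colon Y_{\GL_r}\to Y_{\GL_R}$ with respect to the twisted Weyl actions, already recorded in the excerpt ($\w[y]=\phi(\w)[y]$, so in particular $\phi(\w[0])=\phi(\w)[0]$ — but one must be a little careful, since $\phi(\w)[0]$ involves $\rho_{\GL_R}$ rather than $\rho_{\GL_r}$, and indeed $\phi(\w)[0]=\phi(\w[0]) + (\rho_{\GL_R}-\rho_{\GL_r}) - \w(\rho_{\GL_R}-\rho_{\GL_r})=\phi(\w[0])$ since $\rho_{\GL_R}-\rho_{\GL_r}$ is $\w$-fixed). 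Then apply $\w_{0,R}$ to $\widehat{\phi(y)}=\w_{0,R}(\phi(y))$: we get $\phi(y) = \w_{0,R}(\widehat{\phi(y)})$, and plugging in the hypothesis (i) yields $\phi(y)=\w_{0,R}(\widehat{\phi(\w)}[0]) + \w_{0,R}(y')$. The first term is $\widehat{\widehat{\phi(\w)}}[0]=\phi(\w)[0]=\phi(\w[0])$ by the same $\rho$-computation as above (applied in $\GL_R$), and $\w_{0,R}$ preserves $n_\alpha Y_{\GL_R}$, so $\phi(y) = \phi(\w[0]) + y''$ with $y''=\w_{0,R}(y')\in n_\alpha Y_{\GL_R}$. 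Since $\phi$ is an injection of lattices with $\phi(Y_{\GL_r})$ a direct summand of $Y_{\GL_R}$ (it is spanned by $e_1,\dots,e_r$ among $e_1,\dots,e_R$), the containment $\phi(y)+\phi(\w[0])^{-1}\in n_\alpha Y_{\GL_R}$ forces $y + \w[0]$ — wait, I need $\phi(y)-\phi(\w[0])\in n_\alpha Y_{\GL_R}$ to descend to $y-\w[0]\in n_\alpha Y_{\GL_r}$, which is exactly the statement that $\phi^{-1}(n_\alpha Y_{\GL_R}) = n_\alpha Y_{\GL_r}$; this holds because $\phi(e_i)=e_i$ identifies $Y_{\GL_r}$ with the span of the first $r$ coordinates. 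This gives (i)$\Rightarrow$(ii), and reversing the steps (using surjectivity of $\w_{0,R}$ and the fact that $\widehat{\phi(\w)}$ ranges over the image of $W_{\GL_r}$ appropriately) gives (ii)$\Rightarrow$(i), along with $y'=\widehat{\phi(y_1)}$: indeed from (ii), $\phi(y)=\phi(\w[0])+\phi(y_1)$, apply $\w_{0,R}$ to obtain $\widehat{\phi(y)} = \widehat{\phi(\w)}[0] + \widehat{\phi(y_1)}$, reading off $y'=\widehat{\phi(y_1)}$.

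The routine but slightly fiddly ingredient throughout is the identity $\w_{0}(-\w[0])=\widehat{\w}[0]$ for a longest element $\w_0$; I would prove it once and for all in both $\GL_r$ and $\GL_R$ via $\w[0] = \w(-\rho)+\rho = \rho - \w(\rho)$, so $-\w[0] = \w(\rho)-\rho$, hence $\w_0(-\w[0]) = \w_0\w(\rho) - \w_0(\rho) = \w_0\w(\rho)+\rho$, while $\widehat{\w}[0] = \rho - \w_0\w\w_0^{-1}(\rho) = \rho + \w_0\w\w_0^{-1}(-\rho) = \rho + \w_0\w(\rho)$ using $\w_0^{-1}(-\rho)=\w_0(-\rho)=\rho$; these agree.

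The main obstacle I anticipate is not any single computation but rather keeping the bookkeeping straight across two groups: one must consistently track whether a $\rho$-shift is $\rho_{\GL_r}$ or $\rho_{\GL_R}$, exploit that their difference is $W_{\GL_r}$-invariant at each step, and verify the two lattice facts $\w_{0,R}(n_\alpha Y_{\GL_R})=n_\alpha Y_{\GL_R}$ and $\phi^{-1}(n_\alpha Y_{\GL_R})=n_\alpha Y_{\GL_r}$ — the latter being where the explicit coordinate description $\phi(e_i)=e_i$ is essential. Everything else reduces to the single $\rho$-identity above together with the equivariance of $\phi$ already established in the text and Lemma \ref{L:01}.
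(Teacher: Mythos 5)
Your route is the paper's own: apply the longest Weyl elements, reduce to the identity relating $\w_0$, $\rho$ and the twisted action, use that $\rho_{\GL_R}-\rho_{\GL_r}$ is $W_{\GL_r}$-fixed, and finish with the lattice facts $\w_{0,R}(n_\alpha Y_{\GL_R})=n_\alpha Y_{\GL_R}$ and $\phi^{-1}(n_\alpha Y_{\GL_R})=Y_{\GL_r}\cap n_\alpha Y_{\GL_R}=n_\alpha Y_{\GL_r}$. However, there is a sign error in your bridge out of (i) which, as written, makes you prove a different statement than (ii). Your own identity, correctly proved at the end, is $\w_0(-\sigma[0])=\widehat{\sigma}[0]$, i.e. $\w_0(\sigma[0])=-\widehat{\sigma}[0]$. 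Hence after applying $\w_{0,R}$ to (i), the first term is $\w_{0,R}\bigl(\widehat{\phi(\w)}[0]\bigr)=-\widehat{\widehat{\phi(\w)}}[0]=-\phi(\w)[0]=-\phi(\w[0])$, not $+\phi(\w[0])$ as you assert. With the correct sign you land exactly on (ii): $\phi(y)+\phi(\w[0])=\w_{0,R}(y')\in n_\alpha Y_{\GL_R}$, hence $y+\w[0]\in Y_{\GL_r}\cap n_\alpha Y_{\GL_R}=n_\alpha Y_{\GL_r}$, i.e. $y=-\w[0]+y_1$ with $y'=\w_{0,R}(y_1)=\widehat{\phi(y_1)}$. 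As written you instead conclude $y-\w[0]\in n_\alpha Y_{\GL_r}$, and this is genuinely a different condition: for $\w\neq\id$ the classes of $\w[0]$ and $-\w[0]$ modulo $n_\alpha Y_{\GL_r}$ do not coincide (take $r=2$, $\w=\w_{\alpha_1}$: then $\w[0]=e_1-e_2$ and $-\w[0]=e_2-e_1$ differ by $2(e_1-e_2)\notin n_\alpha Y_{\GL_2}$ since $n_\alpha=R>2$). You even notice the friction mid-argument (``wait, I need \dots'') and resolve it on the wrong side. The repair is a one-line sign fix, and the sign is precisely what produces the $-\w[0]$ in (ii); the paper performs the same computation, phrased as applying $\w_{0,R}^{-1}$ to (i) and simplifying via $\w_{0,R}(\rho_{\GL_R})=-\rho_{\GL_R}$ to $y=\w(\rho_{\GL_R})-\rho_{\GL_R}+\w_{0,R}(y')=-\w[0]+\w_{0,R}(y')$.

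A second, minor point: in your (ii)$\Leftrightarrow$(iii) step the computation correctly yields $y_2=\w_{0,r}(y_1)=\widehat{y_1}$, yet you then announce ``the asserted formula $y_2=\widehat{y}$''. The displayed $y_2=\widehat{y}$ in the statement cannot be meant literally (it would force $\widehat{\w}[0]=0$) and is evidently a slip for $y_2=\widehat{y_1}$; your derivation produces the right quantity, so record that rather than an identity your computation does not give. Apart from these two issues, the ingredients and the structure of your argument coincide with the paper's proof, which treats (i)$\Leftrightarrow$(ii) and notes that the equivalence with (iii) follows in the same way.
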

\begin{proof} We show the equivalence between (i) and (ii), while that with (iii) follows similarly. For (i), the equality $\widehat{\phi(y)}=\widehat{\phi(\w)}[0] + y'$ is equivalent to 
$$\w_{0,R}(y)=\w_{0,R}\cdot \w \cdot \w_{0,R}^{-1}(0-\rho_{\GL_R}) + \rho_{\GL_R} + y',$$
which is further simplified to
$$y=\w(\rho_{\GL_R}) -\rho_{\GL_R} + \w_{0,R}(y')=\w(\rho_{\GL_r}) -\rho_{\GL_r}  + \w_{0,R}(y')=-\w[0] + \w_{0,R}(y').$$
However, since $y$ and $\w[0]$ belong to $Y_{\GL_r}$, we have $\w_{0,R}(y') =Y_{\GL_r} \cap (n_\alpha Y_{\GL_R})=n_\alpha \cdot Y_{\GL_r}$. Moreover, it is clear that $y'=\widehat{\phi(y_1)}$. The proof is completed.
\end{proof}

\vskip 5pt
%%%
\section{Generalized Bump-Hoffstein conjecture}

%%%
\subsection{Whittaker functionals}
We follow \cite{Ga2} for notations in this section. Let $\wt{\GL}_r$ be an  $n$-fold covering group with Borel subgroup $\wt{B}=\wt{T} U$. A representation $\pi$ of $\wt{\GL}_r$ is called genuine if $\bbmu_n$ acts by $\iota: \bbmu_n \into \C^\times$, and it is called unramified if $\pi^K\ne 0$.

Consider the maximal abelian subgroup 
$$\wt{A}:=Z(\wt{T}) \mbf{T}(O_F)$$ 
of $\wt{T}$.
Let $\chi \in \text{Hom}_\iota (Z(\wt{T}), \C^\times)$ be a genuine unramified character, i.e., $\chi$ is trivial on $Z(\wt{T}) \cap K$. Let $i(\chi):=\text{Ind}_{\wt{A}}^{\wt{T}} \chi$ be the irreducible genuine representation of $\wt{T}$,  where $\chi$ also denotes the canonical extension to $\wt{A}$. Consider the normalized principal series  representation 
$$I(\chi):=\text{Ind}_{\wt{B}}^{\wt{\GL}_r} i(\chi)\otimes \mbf{1}.$$

Let $\Ftn(i(\chi))$ be the vector space of  functions $\cc$ on $\wt{T}$  satisfying
$$\cc(\wt{t} \cdot \wt{z}) =  \cc(\wt{t}) \cdot \chi(\wt{z}), \quad \wt{t} \in \wt{T} \text{ and } \wt{z} \in \wt{A}.$$
The support of  $\cc \in \Ftn(i(\chi))$ is a disjoint union of cosets in $\wt{T}/\wt{A}$. Moreover, 
\begin{equation} \label{F:dim}
\dim \Ftn(i(\chi))=\val{Y_{\GL_r}/Y_{\GL_r, Q,n}}
\end{equation}
since $\wt{T}/\wt{A}$ has the same size as $Y_{\GL_r}/Y_{\GL_r, Q,n}$. 
\vskip 5pt

There is a natural isomorphism of vector spaces $\Ftn(i(\chi)) \simeq i(\chi)^\vee$, where $i(\chi)^\vee$ is the complex dual space of functionals of $i(\chi)$. More explicitly, let $\set{\gamma_i}\subseteq \wt{T}$ be a chosen set of representatives of $\wt{T}/\wt{A}$, consider $\cc_{\gamma_i} \in \Ftn(i(\chi))$ which has support $\gamma_i \cdot \wt{A}$ and $\cc_{\gamma_i}(\gamma_i)=1$. It gives rise to a linear functional $\lambda_{\gamma_i}^{\chi} \in i(\chi)^\vee$ such that $\lambda_{\gamma_i}^{\chi}(f_{\gamma_j})=\delta_{ij}$, where $f_{\gamma_j}\in i(\chi)$ is the unique element such that $\text{supp}(f_{\gamma_j})=\wt{A}\cdot \gamma_j^{-1}$ and $f_{\gamma_j}(\gamma_j^{-1})=1$. That is, $f_{\gamma_j}=i(\chi)(\gamma_j)\phi_0$, where $\phi_0\in i(\chi)$ is the normalized unramified vector of $i(\chi)$ such that $\phi_0(1_{\wt{T}})=1$.  In view of this, the isomorphism $\Ftn(i(\chi)) \simeq i(\chi)^\vee$ is given explicitly by
\begin{equation} \label{F:cWh}
 \cc \mapsto   \lambda_\cc^{\chi}:= \sum_{\gamma_i \in \wt{T}/\wt{A}} \cc(\gamma_i) \lambda_{\gamma_i}^{\chi}.
\end{equation}
It can be checked easily that the isomorphism does not depend on the choice of representatives for $\wt{T}/\wt{A}$.

Fix an additive character $\psi: F\to \C^\times$ of conductor $O_F$, the ring of integers of $F$.  Also denote by $\psi: U\to \C^\times$ the character on $U$ such that its restriction to every $U_\alpha$ for $\alpha\in \Delta$ is given by $\psi \circ e_\alpha^{-1}$.

\begin{dfn}
For a genuine irreducible representation $(\pi, V_\pi)$ of $\wt{\GL}_r$, a linear functional $l: V_{\pi} \to \C$ is called a Whittaker functional if $l(\pi(u) v)= \psi(u) \cdot v$ for all $u\in U$ and $v\in V_{\pi}$.
Denote by $\Wh(\pi)$ the space of Whittaker functionals for $\pi$. A genuine representation $\pi$ is called $\psi$-generic (or simply generic) if $\dim \Wh(\pi) \ge 1$.
\end{dfn}

There is an isomorphism between $i(\chi)^\vee$ and the space $\Wh(I(\chi))$ of Whittaker functionals  on $I(i(\chi))$ (cf. \cite{Mc2}), given by $\lambda^\chi \mapsto l_\lambda$ with
$$l_\lambda:  I(\chi) \to \C, \quad f \mapsto \lambda^\chi \left( \int_{U^-} f(u w_0) \psi(u) \mu(u) \right),$$
where $f\in I(\chi)$ is an $i(\chi)$-valued function on $\wt{\GL}_r$. Here $U^-$ is the unipotent subgroup opposite to $U$. For any $\cc\in \Ftn(i(\chi))$, by abuse of notation, we will write $\lambda_\cc^{\chi} \in \Wh(I(\chi))$ for the resulting Whittaker functional of $I(\chi)$ from the composition of isomorphisms $\Ftn(i(\chi))\simeq i(\chi)^\vee \simeq \Wh(I(\chi))$. It is an immediate consequence of (\ref{F:dim}) that
$$\dim \Wh(I(\chi)) = \val{Y_{\GL_r}/Y_{\GL_r, Q,n}}.$$

\vskip 5pt
Let $T_{\w, \chi}: I(\chi) \to I({}^{\w}\chi), \w \in W_{\GL_r}$ be the standard intertwining operator for $\w \in W_{\GL_r}$, defined by analytic continuation of a certain integral (cf. \cite{Mc1}). The intertwining operator $T_{\w_\alpha, \chi}: I(\chi) \to I({}^{\w_\alpha}\chi)$ for any $\alpha\in \Delta_{\GL_r}$ is determined by (cf. \cite{Mc2} and \cite{Ga1})
$$T_{\w_\alpha, \chi} (v_0) = \gk(\w_\alpha, \chi) \cdot v_0' \text{ with } \gk(\w_\alpha, \chi)=\frac{1-q^{-1}\chi(\wt{h}_\alpha(\varpi^{n_\alpha}))}{1-\chi(\wt{h}_\alpha(\varpi^{n_\alpha})}, $$
where $v_0\in I(\chi)$ and $v_0' \in I({}^{\w_\alpha} \chi)$ are the normalized unramified vectors. For general $\w \in W_{\GL_r}$, denote
$$\Phi_\w:=\set{\alpha\in \Phi_{\GL_r}: \alpha>0 \text{ and } \w(\alpha)<0}.$$
Then the Gindikin-Karpelevich  coefficient $\gk(\w, \chi)$ associated with $T_{\w, \chi}$ is
$$\gk(\w, \chi)=\prod_{\alpha\in \Phi_\w} \gk(\w_\alpha, \chi)$$
such that $T_{\w, \chi} (v_0) = \gk(\w, \chi) \cdot v_0'$.

\vskip 5pt

In general, let $J(\w, \chi)$ be the image of $T_{\w, \chi}$. The operator $T_{\w, \chi}$ induces a homomorphism 
$$ T_{\w, \chi}^*: \Wh(I({}^{\w}\chi))\to  \Wh(I(\chi)),$$
whose image is $\Wh(J(\w, \chi))$. We note that $T_{\w,\chi}^*$ is given by 
$$\angb{\lambda_\cc^{{}^{\w}\chi} }{-} \mapsto \angb{\lambda_\cc^{{}^{\w}\chi} }{T_{\w,\chi}(-)}$$
for any $\cc \in \Ftn(i({}^{\w}\chi))$. Let $\set{\lambda_{\gamma}^{^{\w}\chi}}_{\gamma \in \wt{T}/\wt{A}}$ be a basis for  $\Wh(I({}^{\w}\chi))$, and $\set{ \lambda_{\gamma'}^{\chi} }_{\gamma'\in \wt{T}/\wt{A}}$ a basis for $\Wh(I(\chi))$. The map $T_{\w,\chi}^*$ is then determined by the matrix
$[\tau(\chi, \w, \gamma, \gamma')]_{\gamma, \gamma'\in \wt{T}/\wt{A}}$ of size $\val{Y_{\GL_r}/Y_{\GL_r,Q,n}}$
such that
$$T_{\w, \chi}^*(\lambda_{\gamma}^{^{\w}\chi}) = \sum_{\gamma'\in \wt{T}/\wt{A}} \tau(\chi, \w, \gamma, \gamma') \cdot \lambda_{\gamma'}^{\chi}.$$

\subsection{The Shahidi local coefficient matrix}
We will need some properties of the matrix $\left[\tau(\chi, \w, \gamma, \gamma')\right]_{\gamma, \gamma'}$ for an unramified character $\chi$.  The matrix is the analogue of Shahidi's local coefficient in the linear algebraic case, see \cite[Chapter 5]{Sha}.  We will recall the matrix for simple reflection $\w=\w_\alpha$ for $\alpha \in \Delta_{\GL_r}$.

Let $du$ be the normalized Haar measure of $F$ such that $du(O_F)=1$; thus, $du(O_F^\times)=1-1/q$. The Gauss sum is defined by
$$G_\psi(a, b)=\int_{O^\times_F} (u, \varpi)_n^a \cdot \psi(\varpi^b u) du, \quad a, b\in \Z.$$
It is known that
\begin{equation} \label{F:gauss}
G_\psi(a, b)=
\begin{cases}
0 & \text{ if } b<-1, \\
1-1/q & \text{ if } n| a \text{ and } b\ge 0,\\
0 &\text{ if } n\nmid a \text{ and } b\ge 0, \\
-1/q &\text{ if } n|a \text{ and } b=-1,\\
G_\psi(a, -1) \text{ with } |G_\psi(a,-1)|=q^{-1/2} &\text{ if } n\nmid a \text{ and } b=-1.
\end{cases}
\end{equation}
One has 
\begin{equation} \label{F:Gneg}
\overline{G_\psi(a, b)}= G_\psi(-a, b),
\end{equation}
where the left hand side denotes the conjugation of $G_\psi(a, b)$. For any $k\in \Z$, we write
$$\mathbf{g}_{\psi}(k):=G_{\psi}(k, -1).$$

For any real number $x$, we write $\ceil{x}\in \Z$ for the minimum integer such that $\ceil{x}\ge x$. To summarize properties of $[\tau(\chi, \w_\alpha, \gamma, \gamma')]_{\gamma, \gamma'}$, we state the following theorem by McNamara (with refinement from \cite{Ga2}) which generalizes \cite[Lemma I.3.3]{KP}.
\begin{thm}[{\cite[Theorem 13.1]{Mc2}}] \label{T:SLCM}
Suppose that $\gamma=\s_{y_1}$ is represented by $y_1$ and $\gamma'=\s_y$ by $y$. Then we can write $\tau(\chi, \w_\alpha, \gamma, \gamma')=\tau^1(\chi, \w_\alpha, \gamma, \gamma') + \tau^2(\chi, \w_\alpha, \gamma, \gamma')$ with the following properties:
\begin{enumerate}
\item[$\bullet$] $\tau^i(\chi, \w_\alpha,\gamma \cdot \wt{z}, \gamma' \cdot \wt{z}')=({}^{\w_\alpha} \chi)^{-1}(\wt{z}) \cdot \tau^i(\chi, \w_\alpha, \gamma, \gamma') \cdot \chi(\wt{z}'), \quad \wt{z}, \wt{z}'\in \wt{A}$;
\item[$\bullet$] $\tau^1(\chi, \w_\alpha, \gamma, \gamma')=0$  unless  $y_1 \equiv y \mod Y_{\GL_r,Q,n}$;
\item[$\bullet$] $\tau^2(\chi, \w_\alpha, \gamma, \gamma')=0$   unless $y_1 \equiv \w_\alpha[y] \mod Y_{\GL_r, Q,n}$.
\end{enumerate}
Moreover, 
\begin{enumerate}
\item[$\bullet$] If $y_1= y$, then 
$$\tau^1(\chi, \w_\alpha, \gamma, \gamma')=(1-q^{-1}) \frac{\chi (\wt{h}_\alpha(\varpi^{n_\alpha}))^{k_{y,\alpha}}}{1-\chi (\wt{h}_\alpha(\varpi^{n_\alpha}))}, \text{ where } k_{y,\alpha}=\ceil{\frac{\angb{y}{\alpha}}{n_\alpha}}.$$
\item[$\bullet$] If $y_1=\w_\alpha[y]$, then
$$\tau^2(\chi, \w_\alpha, \gamma, \gamma') =  \g(\angb{y_\rho}{\alpha}Q(\alpha^\vee)).$$
\end{enumerate}
\end{thm}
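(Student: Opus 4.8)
The plan is to follow the strategy McNamara uses for proving the analogous result for $\mbf{GL}_r$-type objects, carefully keeping track of the covering-theoretic corrections, and then to invoke the $\bbmu_{2n}$-hypothesis (via Lemma \ref{L:2n}) to simplify the resulting cocycles. The starting point is the explicit integral formula for the Whittaker functional: for $f \in I(\chi)$ one has $l_\lambda(f) = \lambda^\chi\!\left(\int_{U^-} f(u w_0)\,\psi(u)\,du\right)$, and the intertwining operator $T_{\w_\alpha,\chi}$ for a simple reflection is given by integration over the one-dimensional unipotent $U_{-\alpha}$. Composing, the matrix entry $\tau(\chi,\w_\alpha,\gamma,\gamma')$ is expressed as an integral over $F$ (or rather over the relevant coset space) of a product of the cocycle coming from (\ref{F:s}), the Hilbert symbol, and $\psi$. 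First I would reduce, using the $\wt{A}$-equivariance in the first bullet of the theorem statement, to the case where $\gamma = \s_{y_1}$ and $\gamma' = \s_y$ are torus elements represented by lattice vectors; this is exactly the normalization already built into the statement.

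Next I would break the integral into the part over $O_F^\times \cdot \varpi^k$ for each $k \in \Z$, which is where the Gauss sums $G_\psi(a,b)$ enter: each annular piece contributes a factor of the shape $G_\psi(\text{something linear in }\angb{y}{\alpha}, \text{something in }k)$, and the vanishing properties (\ref{F:gauss}) collapse the sum over $k$ to at most the two surviving terms. The term coming from $k$ large (the "constant term" region, $b \ge 0$) produces the geometric-series expression and forces $y_1 \equiv y \bmod Y_{\GL_r,Q,n}$: this is $\tau^1$, and tracking the ceiling function $k_{y,\alpha} = \ceil{\angb{y}{\alpha}/n_\alpha}$ requires identifying precisely which power of $\varpi$ first lands in the kernel of $\chi$ on $\wt h_\alpha(\varpi^{n_\alpha})$-translates — here the $n_\alpha$ appears because $\wt h_\alpha(\varpi^{n_\alpha})$ is the generator of the relevant subgroup on which $\chi$ is unramified-trivial, cf. the formula for $\gk(\w_\alpha,\chi)$. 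The term coming from $k = -1$ (the $b = -1$ region) produces a single genuine Gauss sum $\mathbf{g}_{\psi^{-1}}(\cdot)$ and forces $y_1 \equiv \w_\alpha[y] \bmod Y_{\GL_r,Q,n}$: this is $\tau^2$. The main computation is to check that the argument of this Gauss sum is exactly $\angb{y_\rho}{\alpha} Q(\alpha^\vee)$; this comes from combining the exponent $D(y,\alpha^\vee)$-type contribution in the twisted group law (\ref{F:s}), the conjugation rule (\ref{F:W-act}) applied to move $w_\alpha$ past $\s_y$, and the shift by $\rho$ that converts the bare $\angb{y}{\alpha}$ into $\angb{y_\rho}{\alpha} = \angb{y}{\alpha} - 1$; that the net coefficient is $B_Q(y,\alpha^\vee) - Q(\alpha^\vee) = \angb{y}{\alpha}Q(\alpha^\vee) - Q(\alpha^\vee)$, i.e.\ $\angb{y_\rho}{\alpha}Q(\alpha^\vee)$, is the crux.

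The step I expect to be the main obstacle is precisely this bookkeeping of cocycles in the $\tau^2$-term: one has to be scrupulous about which section ($\s$ versus the Weyl representatives $w_\alpha \in K$ versus the liftings $\wt h_\alpha$) is being used at each stage, since the various natural lifts differ by Hilbert symbols, and it is easy to be off by a sign or by a factor of $(\varpi,\varpi)_n$. It is exactly here that the blanket assumption $\bbmu_{2n} \subseteq F^\times$ pays off: by Lemma \ref{L:2n}, $w_\alpha \cdot \s_y \cdot w_\alpha^{-1} = \s_{\w_\alpha(y)}$ on the nose, so all the would-be correction terms $(\varpi,\varpi)_n^{\,\ast}$ disappear and the conjugation is clean. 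After that simplification the identification of the Gauss-sum argument is a direct linear-algebra computation using $B_Q(e_i,\alpha^\vee)$ and $Q(\alpha^\vee) = 2\p - \bq$, and the remaining claims — the vanishing statements modulo $Y_{\GL_r,Q,n}$ and the shape of $\tau^1$ — follow from the support analysis of the relevant Fourier-type integral together with (\ref{F:dim}). Since the result is quoted from \cite[Theorem 13.1]{Mc2} with a refinement from \cite{Ga2}, in the write-up I would cite McNamara for the structural decomposition $\tau = \tau^1 + \tau^2$ and the equivariance, and supply only the explicit evaluation of the two surviving terms under our running hypotheses.
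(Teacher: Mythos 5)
The paper does not actually prove this statement: it is imported wholesale from McNamara \cite[Theorem 13.1]{Mc2} (with the refinement recorded in \cite{Ga2}, generalizing \cite[Lemma I.3.3]{KP}), so there is no internal argument to compare against, and your plan --- cite McNamara for the decomposition $\tau=\tau^1+\tau^2$ and the $\wt{A}$-equivariance, and only verify the explicit rank-one evaluation under the running hypotheses --- is exactly what the paper itself does. Your sketch of that evaluation (reduction to the one-parameter unipotent integral, splitting over valuations into Gauss sums $G_\psi(a,b)$, using Lemma \ref{L:2n} to kill the $(\varpi,\varpi)_n$ corrections, and identifying the Gauss-sum argument as $B_Q(y,\alpha^\vee)-Q(\alpha^\vee)=\angb{y_\rho}{\alpha}Q(\alpha^\vee)$) is consistent with the cited computation. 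One minor imprecision: the contribution with $b=-1$ and $n\mid a$ (the $-q^{-1}$ term) also feeds into $\tau^1$, completing the geometric series that produces $(1-q^{-1})\chi(\wt{h}_\alpha(\varpi^{n_\alpha}))^{k_{y,\alpha}}/(1-\chi(\wt{h}_\alpha(\varpi^{n_\alpha})))$, so the split is governed by the congruence $n\mid a$ versus $n\nmid a$ rather than purely by $b\ge 0$ versus $b=-1$; this does not affect the overall structure of your argument.
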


\subsection{Whittaker function}
Now for any $\lambda^\chi \in i(\chi)^\vee$, we define the Whittaker function $\mca{W}_\lambda^{\GL_r}$ by
$$\mca{W}_\lambda^{\GL_r}(g):=\lambda^\chi ( I(\chi)(g)v_0 ),$$
where $v_0 \in I(\chi)$ is the normalized unramified vector. We also write
$$\mca{W}_\cc^{\GL_r}:=\mca{W}_{\lambda_\cc}^{\GL_r} \text{ for } \cc \in \Ftn(i(\chi)), \text{ and } \mca{W}_\gamma^{\GL_r}:=\mca{W}_{\lambda_{\cc_\gamma}}^{\GL_r} \text{ for } \gamma\in \wt{T}$$
for the arising Whittaker models for $I(\chi)$.

For any lifting $\wt{t}\in \wt{T}$ of $t\in T$, write
$$\delta_{B}^{1/2}(\wt{t}):=\delta_{B}^{1/2}(t),$$
where $\delta_B$ is the modular character of $B$. An element $\wt{t}\in \wt{T}$ is called dominant if $\wt{t} \cdot (U\cap K) \cdot \wt{t}^{-1} \subseteq K$.  The following result generalizes \cite[Theorem 4.2]{Pat} for Kazhdan-Patterson coverings. In fact, the proof shows that it holds for coverings of general reductive groups besides $\wt{\GL}_r$ as well.

\begin{prop} \label{P:Pat}
Consider an $n$-fold covering group $\wt{\GL}_r$. Let $I(\chi)$ be an unramified principal series  of $\wt{\GL}_r$ and $\gamma \in \wt{T}$. Let $\mca{W}_\gamma^{\GL_r}$ be the Whittaker model of $I(\chi)$ associated to $\gamma$. Then, $\mca{W}_\gamma^{\GL_r}(\wt{t})=0$ unless $\wt{t}$ is dominant. Moreover, for dominant $\wt{t}\in \wt{T}$, we have
$$ \mca{W}_\gamma^{\GL_r}(\wt{t})=\delta_B^{1/2}( \wt{t} ) \cdot \sum_{\w \in W_{\GL_r}} \gk(\w_{0,r} \w^{-1}, \chi)  \tau(\w, {}^{\w^{-1}}\chi, \gamma, w_{0,r} \cdot \wt{t} \cdot w_{0,r}^{-1}) .$$
\end{prop}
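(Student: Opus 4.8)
The plan is to compute the Whittaker function $\mca{W}_\gamma^{\GL_r}(\wt{t})$ by unfolding the Jacquet integral and decomposing the opposite unipotent $U^-$ according to the Bruhat cells, exactly as in the classical (linear) case treated in \cite{Pat}. First I would recall that $\mca{W}_\gamma^{\GL_r}(g) = \lambda_{\cc_\gamma}^\chi\big(\int_{U^-} I(\chi)(g) v_0 (u w_{0,r})\, \psi(u)\, du\big)$, so that evaluating at $\wt{t}$ amounts to moving $\wt{t}$ past $w_{0,r}$ and across $U^-$. The vanishing for non-dominant $\wt{t}$ is the standard support computation: conjugating $\wt{t}$ through $U^-$ changes the $\psi$-character and forces the integral to vanish unless $\wt{t}\cdot(U\cap K)\cdot\wt{t}^{-1}\subseteq K$, i.e.\ unless $\wt{t}$ is dominant; this part goes through verbatim in the covering setting since the canonical unipotent splitting is $\wt{\GL}_r$-equivariant and $\psi$ is defined via that splitting.

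For dominant $\wt{t}$, the main step is the Bruhat/Iwasawa decomposition of $U^-$. Writing $U^- = \bigsqcup_{\w \in W_{\GL_r}} (U^- \cap w^{-1} U^- w)\cdot(\text{cell data})$, each Weyl cell contributes an integral that, after applying the Iwasawa decomposition $g = u t' k$, splits into (i) a torus part $\delta_B^{1/2}(\wt t)$ together with the action of $\wt t$ on $v_0$, (ii) a Gindikin–Karpelevich type factor coming from integrating over the ``large'' part of the cell — this is where $\gk(\w_{0,r}\w^{-1}, \chi)$ enters, since $\Phi_{\w_{0,r}\w^{-1}}$ is precisely the set of positive roots sent negative by the relevant sub-Weyl element — and (iii) a Jacquet-integral-type factor over the ``small'' part of the cell, which upon pairing with $\lambda_{\cc_\gamma}^\chi$ and tracking the effect of conjugating $\wt t$ (hence the appearance of $w_{0,r}\cdot\wt t\cdot w_{0,r}^{-1}$ as the second torus argument) produces exactly the local coefficient entry $\tau(\w, {}^{\w^{-1}}\chi, \gamma, w_{0,r}\cdot\wt t\cdot w_{0,r}^{-1})$. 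Summing over $\w\in W_{\GL_r}$ gives the stated formula. I would organize the bookkeeping using Lemma \ref{L:2n}, which guarantees that the chosen representatives $w$ in $K$ conjugate the section $\s_y$ to $\s_{\w(y)}$ without spurious Hilbert-symbol cocycle terms — this is exactly what makes the clean ``linear-looking'' formula hold for the covering.

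The hard part will be step (ii)/(iii): correctly matching the cell integrals with the intertwining-operator normalizations so that the Gindikin–Karpelevich coefficient and the local coefficient matrix $\tau$ come out with the right Weyl elements (${}^{\w^{-1}}\chi$ inside $\tau$, and $\w_{0,r}\w^{-1}$ inside $\gk$) rather than some conjugate or inverse thereof. This requires carefully composing the isomorphisms $\Ftn(i(\chi))\simeq i(\chi)^\vee\simeq \Wh(I(\chi))$ with the decomposition $T_{\w_{0,r},\chi} = T_{\w_{0,r}\w^{-1},\, {}^{\w^{-1}}\chi}\circ T_{\w^{-1},\chi}$ of the long intertwining operator through the cell stratification, and tracking how $\lambda_{\cc_\gamma}^\chi$ transforms under $T_{\w,\chi}^*$ — which is governed precisely by the matrix $[\tau(\chi,\w,\gamma,\gamma')]$. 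Once the normalizations are pinned down, the rest is the routine measure computation and an application of \eqref{F:gauss}; I do not expect genuine difficulty beyond the covering-specific cocycle tracking, which Lemma \ref{L:2n} and the formula \eqref{F:W-act} handle. The argument is essentially that of \cite[Theorem 4.2]{Pat}, adapted to arbitrary $\wt{\GL}_r$ (indeed to general reductive covers), so I would present it by indicating the modifications rather than reproducing the full computation.
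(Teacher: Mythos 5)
Your overall strategy (unfold the Jacquet integral, stratify by Weyl cells, extract a Gindikin--Karpelevich factor and a local-coefficient entry from each cell) is, in spirit, the machinery \emph{behind} this formula, but as written your proposal leaves exactly the substantive step unproved. The paper's own proof is short because it does not redo any cell computation: it quotes McNamara's identity \cite[Theorem 8.1]{Mc2}, which expresses the spherical Whittaker value $\mca{W}_\gamma^{\chi}(\wt{t})$ as $\sum_{\w}\gk(\w_{0,r},{}^{\w^{-1}}\chi)$ times the value of $\lambda_\gamma$ on the \emph{normalized} intertwiner $\gk(\w,{}^{\w^{-1}}\chi)^{-1}T_{\w,{}^{\w^{-1}}\chi}$ applied to the right-translate of the Iwahori-fixed vector $\phi_{\w_{0,r}}$ supported on the big cell; it then transports $\lambda_\gamma$ across these operators using the defining property of the matrix $[\tau]$, evaluates the big-cell contribution by \cite[Lemma 6.3]{Mc2} (this is where the factor $\delta_B^{1/2}(\wt{t})$, the argument $w_{0,r}\wt{t}w_{0,r}^{-1}$, and the collapse of the $\gamma'$-sum to a single coset come from), and finally converts $\gk(\w_{0,r},{}^{\w^{-1}}\chi)\cdot\gk(\w,{}^{\w^{-1}}\chi)^{-1}$ into $\gk(\w_{0,r}\w^{-1},\chi)$ by the cocycle relation. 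In your proposal, the assertion that ``each Weyl cell contributes a GK factor times exactly the entry $\tau(\w,{}^{\w^{-1}}\chi,\gamma,w_{0,r}\wt{t}w_{0,r}^{-1})$'' is essentially the statement of the proposition itself; deferring it as routine bookkeeping (``I do not expect genuine difficulty'') hides the analogue of Theorem 8.1 and Lemma 6.3 of \cite{Mc2}, which is where all the work lives. Either cite those results, as the paper does, or actually carry out the stratified computation --- but then you must prove, not assume, the matching of cell integrals with the normalized intertwiners and with the action of $T^*_{\w,\chi}$ on $\lambda_\gamma$.

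Two smaller points. The vanishing off dominant $\wt{t}$ is indeed standard, but the clean argument is right invariance of the spherical vector under $U\cap K$ (so $\mca{W}_\gamma(\wt{t})=\psi(\wt{t}u\wt{t}^{-1})\mca{W}_\gamma(\wt{t})$ for $u\in U\cap K$), not a manipulation of the $U^-$-integral as you describe. Also, Lemma \ref{L:2n} is not needed for this proposition at all: the formula holds for arbitrary dominant $\wt{t}$ with no choice of section, and the lemma only enters later (in Proposition \ref{P:Wh-Th}) when one specializes to $\wt{t}=\s_y$ and rewrites $w_{0,r}\cdot\s_y\cdot w_{0,r}^{-1}$ as $\s_{\w_{0,r}(y)}$.
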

\begin{proof}
We will change the notation slightly only for the proof: denote by $\mca{W}_\gamma^{\chi}$ the Whittaker function of $I(\chi)$ associated with $\gamma \in \wt{T}$. For convenience, we also introduce the temporary notation
$$\breve{T}_{\w, \chi}:=\gk(\w, \chi)^{-1} \cdot T_{\w, \chi}, \text{ and } \breve{\tau}(\chi, \w, \gamma, \gamma'):=\gk(\w, \chi)^{-1} \cdot \tau(\chi, \w, \gamma, \gamma').$$
By the definition of $\tau(\chi, \w, \gamma, \gamma')$, one has
$$\lambda_\gamma^{\chi}\circ \breve{T}_{\w, \wchi} = \sum_{\gamma'\in \wt{T}/\wt{A}} \breve{\tau}({}^{\w^{-1}}\chi, \w, \gamma, \gamma') \cdot \lambda_{\gamma'}^{{}^{\w^{-1}}\chi}.$$

Write $\w_0:=\w_{0,r}$. As in \cite[\S 5]{Mc2}, let $\phi_{\w_0}^{\chi} \in I(\chi)$ be the element supported on $\wt{B}w_0 I$ and $\phi_{\w_0}^{\chi}(w_0)=\phi_0$ is the unramified vector in $i(\chi)$, where $I \subset K$ is the standard Iwahori subgroup associated to $\Delta_{\GL_r}$. Denote by $R(\wt{g})$ the right translation action of $\wt{g}$ on the space of $I(\chi)$. Then, by \cite[Theorem 8.1]{Mc2}, it suffices to compute for dominant $\wt{t} \in \wt{T}$ that
\begin{align*}
& \mca{W}_\gamma^{\chi}(\wt{t}) \\
=& \sum_{\w \in W} \gk(\w_0, {}^{\w^{-1} }\chi) \cdot \mca{W}_\gamma^{\chi}\left( \breve{T}_{\w, {}^{\w^{-1}}\chi} \big(R(\wt{t})\phi_{\w_0}^{ {}^{\w^{-1}}\chi  } \big) \right) \\
=&  \sum_{\w \in W} \gk(\w_0, {}^{\w^{-1} }\chi) \sum_{\gamma'\in \wt{T}/\wt{A}} \breve{\tau}(\w, {}^{\w^{-1}}\chi, \gamma, \gamma') \cdot \mca{W}_{\gamma'}^{ {}^{\w^{-1}}\chi } \left( R(\wt{t})\phi_{\w_0}^{ {}^{\w^{-1}}\chi  } \right) \\
=& \sum_{\w \in W} \gk(\w_0, {}^{\w^{-1} }\chi) \sum_{\gamma'\in \wt{T}/\wt{A}} \breve{\tau}(\w, {}^{\w^{-1}}\chi, \gamma, \gamma') \cdot \delta_B^{1/2}(t) \cdot \lambda_{\gamma'}^{ {}^{\w^{-1}}\chi } \left( i({}^{\w^{-1}}\chi)(w_0\cdot \wt{t} \cdot w_0^{-1}) \phi_0 \right),
\end{align*} 
where the last equality follows from \cite[Lemma 6.3]{Mc2}. However, 
$$\lambda_{\gamma'}^{ {}^{\w^{-1}}\chi } \left( i({}^{\w^{-1}}\chi)(w_0\cdot \wt{t} \cdot w_0^{-1}) \phi_0 \right)\ne 0$$
if and only if 
$$\gamma' \in w_0 \wt{t} w_0^{-1} \cdot \wt{A}.$$
Therefore,
$$ \mca{W}_\gamma^{\chi}(\wt{t})=\delta_B^{1/2}(t) \cdot \sum_{\w \in W} \gk(\w_0, {}^{\w^{-1} }\chi)\cdot  \breve{\tau}(\w, {}^{\w^{-1}}\chi, \gamma, w_0 \cdot \wt{t} \cdot w_0^{-1}) .$$
The result follows from the cocycle relation of the Gindikin-Karpelevich coefficients.
\end{proof}

\subsection{Theta representation and its Whittaker functions}
An unramified genuine character $\chi$ of $Z(\wt{T}) \subseteq \wt{\GL}_r$ is called exceptional if $$\chi(\wt{h}_\alpha(\varpi^{n_\alpha}))=q^{-1}$$
 for all $\alpha\in \Delta_{\GL_r}$. The theta representation $\Theta(\wt{\GL}_r, \chi)$ associated to an exceptional character $\chi$ is the unique Langlands quotient (cf. \cite{BJ}) of $I(\chi)$, which is also equal to the image of the intertwining operator $T_{\w_{0,r}, \chi}: I(\chi) \to I({}^{\w_{0,r}}\chi)$. Therefore, we could identify $\Wh (\Theta(\wt{\GL}_r, \chi))$ as a subspace of $\Wh (I(\chi))$. Following \cite{Ga2}, we call $\Theta(\wt{\GL}_r, \chi)$ distinguished if $\dim \Wh (\Theta(\wt{\GL}_r, \chi))=1$.

\begin{prop} \label{P:Wh-Th}
Assume that $\cc \in \Ftn(i(\chi))$ gives rise to a normalized Whittaker model $\mca{W}_\cc^{\GL_r}$ of $\Theta(\wt{\GL}_r, \chi)$. Then for any dominant $\wt{t}$, one has
$$\mca{W}_\cc^{\GL_r}(\wt{t})=\gk(\w_{0,r}, \chi) \cdot \delta_B^{1/2}(\wt{t}) \cdot \cc (w_{0,r} \cdot \wt{t} \cdot w_{0,r}^{-1}). $$
If $\cc(1_{\wt{T}})=1$, then $\mca{W}_\cc^{\GL_r}(\wt{t})= \delta_B^{1/2}(\wt{t}) \cdot \cc (w_{0,r} \cdot \wt{t} \cdot w_{0,r}^{-1}) \cdot \mca{W}_\cc^{\GL_r}(1)$; in particular,
$$\mca{W}_\cc^{\GL_r}(\s_y)= \delta_B^{1/2}(\s_y) \cdot \cc (\s_{\w_{0,r}(y)}) \cdot \mca{W}_\cc^{\GL_r}(1)$$
for any $\s_y$ dominant. 
% \textcolor{red}{SANITY CHECK FOR ALL STATEMENTS AFFECTED IN THIS PAPER}
\end{prop}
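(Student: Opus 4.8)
The plan is to play off the two descriptions of $\Theta(\wt{\GL}_r,\chi)$ recalled in \S 3.4: it is the Langlands quotient of $I(\chi)$, and it is the submodule $\mathrm{Im}(T_{\w_{0,r},\chi})\subseteq I({}^{\w_{0,r}}\chi)$. Since $\mca{W}_\cc^{\GL_r}$ is a Whittaker model of the quotient, $\lambda_\cc^\chi$ is the pull-back of a Whittaker functional of $\Theta$; as $\Wh(\Theta)$ coincides with the image of $T_{\w_{0,r},\chi}^*$ inside $\Wh(I(\chi))$, I would write $\lambda_\cc^\chi=T_{\w_{0,r},\chi}^*(\lambda_{\cc'}^{{}^{\w_{0,r}}\chi})$ for a suitable $\cc'\in\Ftn(i({}^{\w_{0,r}}\chi))$. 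By the definition of the local coefficient matrix this says precisely that $\cc(\gamma')=\sum_\gamma\cc'(\gamma)\,\tau(\chi,\w_{0,r},\gamma,\gamma')$. Combining this with $T_{\w_{0,r},\chi}(v_0)=\gk(\w_{0,r},\chi)\,v_0'$ (where $\gk(\w_{0,r},\chi)\neq0$ for exceptional $\chi$) and the fact that the unramified vector of $I(\chi)$ maps to that of $\Theta$, one unwinds the definitions to obtain $\mca{W}_\cc^{\GL_r}=\gk(\w_{0,r},\chi)\cdot\mca{W}_{\cc'}^{{}^{\w_{0,r}}\chi}$, the right-hand side being the Whittaker function of the full principal series $I({}^{\w_{0,r}}\chi)$ attached to $\cc'$.

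The crux is that ${}^{\w_{0,r}}\chi$ is \emph{anti-exceptional}, i.e. $({}^{\w_{0,r}}\chi)(\wt{h}_\alpha(\varpi^{n_\alpha}))=q$ for every simple $\alpha$. To see this I would conjugate $\wt{h}_{\alpha_i}(\varpi^{n_\alpha})=\s_{n_\alpha\alpha_i^\vee}$ by $w_{0,r}$ and use $\w_{0,r}(\alpha_i^\vee)=-\alpha_{r-i}^\vee$ together with $(\varpi,\varpi)_n=1$ (Lemma \ref{L:2n}), obtaining $({}^{\w_{0,r}}\chi)(\wt{h}_{\alpha_i}(\varpi^{n_\alpha}))=\chi(\wt{h}_{\alpha_{r-i}}(\varpi^{n_\alpha}))^{-1}=q$. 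Then $\gk(\w_\beta,{}^{\w_{0,r}}\chi)=(1-q^{-1}q)/(1-q)=0$ for every simple $\beta$; and writing any non-identity Weyl element $\w'$ as $\w'=\w_1\w_\beta$ with $\beta$ a right descent of $\w'$ and $\ell(\w')=\ell(\w_1)+1$, the cocycle relation for the Gindikin--Karpelevich coefficients gives $\gk(\w',{}^{\w_{0,r}}\chi)=\gk(\w_1,{}^{\w_\beta}({}^{\w_{0,r}}\chi))\,\gk(\w_\beta,{}^{\w_{0,r}}\chi)=0$.

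Now I would apply Proposition \ref{P:Pat} to $I({}^{\w_{0,r}}\chi)$ and the datum $\cc'$: the coefficient of the $\w$-term is $\gk(\w_{0,r}\w^{-1},{}^{\w_{0,r}}\chi)$, which by the previous step vanishes unless $\w_{0,r}\w^{-1}=e$, i.e. $\w=\w_{0,r}$. For that single surviving term the Gindikin--Karpelevich factor is $\gk(e,{}^{\w_{0,r}}\chi)=1$ and the matrix factor is $\tau(\w_{0,r},{}^{\w_{0,r}^{-1}}({}^{\w_{0,r}}\chi),\cc',w_{0,r}\wt{t}w_{0,r}^{-1})=\tau(\chi,\w_{0,r},\cc',w_{0,r}\wt{t}w_{0,r}^{-1})=\cc(w_{0,r}\wt{t}w_{0,r}^{-1})$, using the identity from the first paragraph. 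Hence $\mca{W}_{\cc'}^{{}^{\w_{0,r}}\chi}(\wt{t})=\delta_B^{1/2}(\wt{t})\,\cc(w_{0,r}\wt{t}w_{0,r}^{-1})$, and multiplying by $\gk(\w_{0,r},\chi)$ gives the first displayed formula. Putting $\wt{t}=1_{\wt{T}}$ yields $\mca{W}_\cc^{\GL_r}(1)=\gk(\w_{0,r},\chi)\,\cc(1_{\wt{T}})$, so under $\cc(1_{\wt{T}})=1$ one may substitute $\gk(\w_{0,r},\chi)=\mca{W}_\cc^{\GL_r}(1)$; and the final identity follows from $w_{0,r}\s_y w_{0,r}^{-1}=\s_{\w_{0,r}(y)}$, again by Lemma \ref{L:2n}.

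The main obstacle, and the real content, is the anti-exceptionality of ${}^{\w_{0,r}}\chi$ and the resulting vanishing $\gk(\w',{}^{\w_{0,r}}\chi)=0$ for all $\w'\neq e$, which is exactly what collapses the intertwining-operator sum of Proposition \ref{P:Pat} to one term; the rest is routine bookkeeping of which twist and which side the intertwining operators act on, together with the standard fact that $\gk(\w_{0,r},\chi)\neq0$ for exceptional characters.
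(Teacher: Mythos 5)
Your argument is correct, but it takes a genuinely different route from the paper's. The paper expands $\mca{W}_\cc^{\GL_r}=\sum_{\gamma}\cc(\gamma)\mca{W}_\gamma^{\GL_r}$, applies Proposition \ref{P:Pat} directly to $I(\chi)$, and kills every term with $\w\neq\id$ by the identity $\sum_{\gamma}\cc(\gamma)\tau(\w_\alpha,{}^{\w_\alpha^{-1}}\chi,\gamma,\gamma')=0$ from \cite[Corollary 3.5]{Ga2} (propagated to general $\w$ via the cocycle relation \cite[Lemma 3.2]{Ga2}); the surviving $\w=\id$ term carries the factor $\gk(\w_{0,r},\chi)$ and the indicator behaviour of $\tau(\id,\chi,\cdot,\cdot)$ yields $\cc(w_{0,r}\wt{t}w_{0,r}^{-1})$. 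You instead transfer everything through $T_{\w_{0,r},\chi}$, writing $\lambda_\cc^\chi=T_{\w_{0,r},\chi}^*(\lambda_{\cc'}^{{}^{\w_{0,r}}\chi})$, so that $\mca{W}_\cc^{\GL_r}=\gk(\w_{0,r},\chi)\,\mca{W}_{\cc'}^{{}^{\w_{0,r}}\chi}$, and then collapse the sum in Proposition \ref{P:Pat} for $I({}^{\w_{0,r}}\chi)$ to the single term $\w=\w_{0,r}$ by the anti-exceptionality of ${}^{\w_{0,r}}\chi$ and the resulting vanishing of the Gindikin--Karpelevich coefficients; unwinding the defining relation $\cc(\gamma')=\sum_\gamma\cc'(\gamma)\tau(\chi,\w_{0,r},\gamma,\gamma')$ (extended from coset representatives to all of $\wt{T}$ by $\wt{A}$-equivariance) recovers $\cc(w_{0,r}\wt{t}w_{0,r}^{-1})$. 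Your route is more self-contained within this paper, since it replaces the external input \cite[Corollary 3.5]{Ga2} by an elementary computation of $\gk$ at the twisted character, whereas the paper's route avoids two small points you should still make explicit: (a) in the product over $\Phi_{\w'}$ (or in your descent/cocycle factorization) one must note that for positive $\alpha$ of coroot height $h$ one has ${}^{\w_{0,r}}\chi(\wt{h}_\alpha(\varpi^{n_\alpha}))=q^{h}\neq 1$, so every factor is finite and the single zero factor coming from a simple root in $\Phi_{\w'}$ really forces $\gk(\w',{}^{\w_{0,r}}\chi)=0$ rather than an indeterminate product; and (b) for the same reason the entries $\tau(\w,{}^{\w^{-1}}({}^{\w_{0,r}}\chi),\gamma,\gamma')$ have no pole at this character, so the vanishing Gindikin--Karpelevich coefficients genuinely annihilate those terms in Proposition \ref{P:Pat}. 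With these regularity remarks added, your proof is complete and gives the same formulas, including the normalization step via $\mca{W}_\cc^{\GL_r}(1)=\gk(\w_{0,r},\chi)\cdot\cc(1_{\wt{T}})$ and the identity $w_{0,r}\s_y w_{0,r}^{-1}=\s_{\w_{0,r}(y)}$ from Lemma \ref{L:2n}.
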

\begin{proof}
It follows from (\ref{F:cWh}) that for any $\cc \in \Ftn(i(\chi))$ we have
\begin{align*}
\mca{W}_\cc^{\GL_r}(\wt{t}) = & \sum_{\gamma \in \wt{T}/\wt{A}} \cc(\gamma) \cdot \mca{W}_\gamma^{\GL_r}( \wt{t} ) \\
=& \delta_B^{1/2}(\wt{t}) \cdot \sum_{\w \in W_{\GL_r}} \gk(\w_{0,r} \w^{-1}, \chi) \left(  \sum_{\gamma\in \wt{T}/\wt{A}} \cc(\gamma)\cdot \tau(\w, {}^{\w^{-1}}\chi, \gamma, w_{0,r} \cdot \wt{t} \cdot w_{0,r}^{-1}) \right).
\end{align*}

We first show that for any $\w\in W_{\GL_r}$ with $\w\ne \id$, one has
$$\sum_{\gamma\in \wt{T}/\wt{A}} \cc(\gamma)\cdot \tau(\w, {}^{\w^{-1}}\chi, \gamma, \gamma')=0 \text{ for any } \gamma' \in \wt{T}.$$
For $\w=\w_\alpha$ with $\alpha\in \Delta_{\GL_r}$, this is just the equality in \cite[Corollary 3.5]{Ga2}. In general, write $\w=\w_\alpha \w_1$ with $\alpha\in \Delta_{\GL_r}$ and $l(\w_1)=l(\w)-1$. Then the cocycle relation in \cite[Lemma 3.2]{Ga2} yields
\begin{align*}
& \sum_{\gamma\in \wt{T}/\wt{A}} \cc(\gamma)\cdot \tau(\w, {}^{\w^{-1}}\chi, \gamma, \gamma') \\
=& \sum_{\gamma\in \wt{T}/\wt{A}} \cc(\gamma) \left( \sum_{\gamma'' \in \wt{T}/\wt{A}} \tau(\w_\alpha, {}^{\w_\alpha^{-1}}\chi, \gamma, \gamma'') \cdot \tau(\w_1, {}^{\w^{-1}}\chi, \gamma'', \gamma') \right) \\
=& \sum_{\gamma'' \in \wt{T}/\wt{A}}  \left( \sum_{\gamma \in \wt{T}/\wt{A}} \cc(\gamma) \tau(\w_\alpha, {}^{\w_\alpha^{-1}}\chi, \gamma, \gamma'')  \right) \cdot \tau(\w_1, {}^{\w^{-1}}\chi, \gamma'', \gamma') \\
=&0,
\end{align*}
where the last equality again follows from \cite[Corollary 3.5]{Ga2}. Therefore
$$\mca{W}_\cc^{\GL_r}(\wt{t})=\delta_B^{1/2}( \wt{t} ) \cdot \gk(\w_{0,r}, \chi) \left(  \sum_{\gamma\in \wt{T}/\wt{A}} \cc(\gamma)\cdot \tau(\id, \chi, \gamma, w_{0,r} \cdot \wt{t} \cdot w_{0,r}^{-1}) \right).$$
Since $\tau(\id, \chi, \gamma, \gamma')=0$ if $\gamma\notin \gamma'\cdot \wt{A}$, the result follows. The last claim regarding the case $\wt{t}=\s_y$ follows from Lemma \ref{L:2n}. This completes the proof.
\end{proof}

More importantly, the dimension of Whittaker functionals for a theta representation is determined as follows.
\begin{prop} \label{P:dim}
For a covering group $\wt{\GL}_r^{(n)}$, the equality $Y_{\GL_r, Q,n}^{sc}= Y_{\GL_r, Q,n} \cap Y^{sc}_{\GL_r}$ always holds. It follows that $\OF_{Q,n} =\OF_{Q,n,sc}$ and therefore
\begin{equation} \label{F:dim}
\dim \Wh (\Theta(\wt{\GL}_r^{(n)}, \chi))=\val{ \wp(\OF_{Q,n}) }=\val{ \wp(\OF_{Q,n,sc}) }.
\end{equation}
If $n_\alpha \ge r$, then the Weyl-orbit $\mca{O}_0 \subset Y_{\GL_r}$ lies in $\OF_{Q,n}$ and therefore $\Theta(\wt{\GL}_r^{(n)}, \chi)$ is generic.
Moreover, if $Y_{\GL_r, Q, n}=n_\alpha \cdot Y_{\GL_r}$, then
$$\dim \Wh (\Theta(\wt{\GL}_r^{(n)}, \chi))=\val{ \wp(\OF_{Q,n}) }=\binom{n_\alpha}{r}.$$
\end{prop}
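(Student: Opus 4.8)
The plan is to establish the four assertions in order, with the lattice identity as the combinatorial input on which the rest rests.

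\emph{The identity $Y_{\GL_r,Q,n}^{sc}=Y_{\GL_r,Q,n}\cap Y_{\GL_r}^{sc}$.} The inclusion ``$\subseteq$'' is immediate: each generator $\alpha_{Q,n}^\vee=n_\alpha\alpha^\vee$ of $Y_{\GL_r,Q,n}^{sc}$ lies in the coroot lattice $Y_{\GL_r}^{sc}$, and since $B_Q(n_\alpha\alpha^\vee,e_i)\in\set{0,\,\pm n_\alpha Q(\alpha^\vee)}\subseteq n\Z$ it also lies in $Y_{\GL_r,Q,n}$. For ``$\supseteq$'' I would use that every coroot of $\GL_r$ carries the same value $Q(\alpha^\vee)=2\p-\bq$, so $n_\alpha$ is independent of $\alpha$ and $Y_{\GL_r,Q,n}^{sc}=n_\alpha\cdot Y_{\GL_r}^{sc}$; given $y=\sum_i k_i e_i$ in $Y_{\GL_r,Q,n}\cap Y_{\GL_r}^{sc}$, the relation $\sum_i k_i=0$ reduces the congruences of (\ref{YQn}) to $Q(\alpha^\vee)k_j\in n\Z$ for all $j$, which by $n_\alpha=n/\gcd(n,Q(\alpha^\vee))$ forces $n_\alpha\mid k_j$ for every $j$, whence $y\in n_\alpha Y_{\GL_r}^{sc}$.

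\emph{Free orbits and the dimension formula.} For $\w\in W_{\GL_r}$ and $y\in Y_{\GL_r}$ one has $\w[y]-y=(\w(y)-y)-(\w(\rho_{\GL_r})-\rho_{\GL_r})\in Y_{\GL_r}^{sc}$, so any two points of a single twisted Weyl orbit $\mca{O}_y$ differ by an element of $Y_{\GL_r}^{sc}$; hence they are congruent modulo $Y_{\GL_r,Q,n}$ exactly when they are congruent modulo $Y_{\GL_r,Q,n}\cap Y_{\GL_r}^{sc}=Y_{\GL_r,Q,n}^{sc}$. Thus $|\wp(\mca{O}_y)|=|\wp^{sc}(\mca{O}_y)|$ for every orbit and $\OF_{Q,n}=\OF_{Q,n,sc}$ as sets of orbits, which makes $\val{\wp(\OF_{Q,n})}=\val{\wp(\OF_{Q,n,sc})}$ formal; the equality of these with $\dim\Wh(\Theta(\wt{\GL}_r^{(n)},\chi))$ I would quote from \cite{Ga2}.

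\emph{Genericity and the count.} Since $\rho_{\GL_r}$ differs from the integral vector $\rho'=(r-1,r-2,\dots,1,0)$ by the $W_{\GL_r}$-invariant vector $\frac{r-1}{2}\sum_i e_i$, replacing $\rho_{\GL_r}$ by $\rho'$ leaves the twisted action unchanged and one computes $(\w[y])_i=y_{\w^{-1}(i)}+\w^{-1}(i)-i$. The key move is to pass to the coordinates $z_i:=y_i+i$ on $Y_{\GL_r}$: this affine isomorphism turns the twisted $W_{\GL_r}$-action into the ordinary permutation action of $S_r$ on $(z_1,\dots,z_r)$, carries $\mca{O}_0$ to the $S_r$-orbit of $(1,2,\dots,r)$ (so $|\mca{O}_0|=r!$ and $\mca{O}_0\in\OF$), and — when $Y_{\GL_r,Q,n}=n_\alpha Y_{\GL_r}$ — identifies $\wp$ with componentwise reduction $Y_{\GL_r}\to(\Z/n_\alpha)^r$, compatibly with the $S_r$-actions. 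Combined with the previous paragraph, a free orbit lies in $\OF_{Q,n}$ precisely when the residues $\bar z_1,\dots,\bar z_r\in\Z/n_\alpha$ are pairwise distinct; for $\mca{O}_0$ this holds as soon as $n_\alpha\ge r$, so $\mca{O}_0\in\OF_{Q,n}$ and $\Theta(\wt{\GL}_r^{(n)},\chi)$ is generic. Finally, sending such an orbit to the $r$-element set $\set{\bar z_1,\dots,\bar z_r}\subseteq\Z/n_\alpha$ factors through $\wp$ and gives a bijection between $\wp(\OF_{Q,n})$ and the family of all $r$-subsets of $\Z/n_\alpha$ (surjective by lifting a given subset to distinct integer representatives, injective since the $\wp$-image orbit recovers the subset), so $\val{\wp(\OF_{Q,n})}=\binom{n_\alpha}{r}$.

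\emph{Main obstacle.} The only genuinely new idea is the substitution $z_i=y_i+i$, which linearizes the twisted Weyl action and, under $Y_{\GL_r,Q,n}=n_\alpha Y_{\GL_r}$, simultaneously linearizes $\wp$; after that both statements reduce to elementary counting of $r$-subsets of $\Z/n_\alpha$. The points requiring care are the bookkeeping in the lattice identity and checking that the dimension formula of \cite{Ga2} is available in precisely the form $\val{\wp(\OF_{Q,n})}$ (equivalently $\val{\wp^{sc}(\OF_{Q,n,sc})}$) used here.
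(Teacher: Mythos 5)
Your proposal is correct and follows essentially the same route as the paper: the same computation establishing $Y_{\GL_r,Q,n}^{sc}=Y_{\GL_r,Q,n}\cap Y_{\GL_r}^{sc}$, the same within-orbit argument giving $\OF_{Q,n}=\OF_{Q,n,sc}$, the dimension formula quoted from \cite{Ga2}, and then the counting that the paper dismisses as ``a simple combinatorial computation'' made explicit via the substitution $z_i=y_i+i$. One small wording point: the genericity assertion assumes only $n_\alpha\ge r$, not $Y_{\GL_r,Q,n}=n_\alpha\cdot Y_{\GL_r}$, but your first two paragraphs already cover it --- $\mca{O}_0$ is $Y_{\GL_r,Q,n}$-free if and only if it is $Y_{\GL_r,Q,n}^{sc}$-free with $Y_{\GL_r,Q,n}^{sc}=n_\alpha\cdot Y_{\GL_r}^{sc}$, and congruence modulo $n_\alpha\cdot Y_{\GL_r}^{sc}$ forces componentwise congruence modulo $n_\alpha$, which the distinct residues of $(1,\dots,r)$ rule out when $n_\alpha\ge r$ --- so you should only detach the distinct-residue criterion from the extra hypothesis $Y_{\GL_r,Q,n}=n_\alpha\cdot Y_{\GL_r}$, which is needed solely for the final count $\binom{n_\alpha}{r}$.
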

\begin{proof} For simplicity, we omit the subscript $\GL_r$ for the notations in the proof. It is clear that $Y_{Q,n}^{sc} \subset Y_{Q,n} \cap Y^{sc}$. Conversely, $y=\sum_i k_i e_i$ belongs to $Y^{sc}$ if and only if $\sum_i k_i=0$. If furthermore $y\in Y_{Q,n}$, it follows from (\ref{YQn}) that $n_\alpha |k_i$ for all $i$. That is, $y$ lies in $n_\alpha \cdot Y^{sc}$, which is exactly $Y_{Q,n}^{sc}$.

Now we show $\OF_{Q,n} =\OF_{Q,n,sc}$. Clearly, $\OF_{Q,n} \subseteq \OF_{Q,n, sc}$. On the other hand, if $\mca{O}_y$ is not $Y_{Q,n}$-free, then there exists $z \in \mca{O}_y$ and $\w \in W_{\GL_r}$ with $\w\ne \text{id}$ such that $\w [z]-z \in Y_{Q,n}$. Since $Y_{Q,n}\cap Y^{sc} =Y_{Q,n}^{sc}$, we see that $\mca{O}_y$ is not $Y_{Q,n}^{sc}$-free as well.

The equality (\ref{F:dim}) follows from \cite[Theorem 3.14]{Ga2}, since $\OF_{Q,n}=\OF_{Q,n,sc}$ in this case. Moreover, the last two assertions including $\val{ \wp(\OF_{Q,n}) }=\binom{n_\alpha}{r}$ follow from a simple combinatorial computation.
\end{proof}

\begin{cor} Let $(\wt{\GL}_r^{(n)}, \wt{\GL}_R^{(n)})$ be a fundamental pair. Then the theta representation $\Theta(\wt{\GL}_R^{(n)}, \chi)$ for any exceptional $\chi$ for $\wt{\GL}_R^{(n)}$ is distinguished, i.e., it has uniqueness of Whittaker functionals. Moreover, $\Theta(\wt{\GL}_r^{(n)}, \mu)$ is generic in this case.
\end{cor}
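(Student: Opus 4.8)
The plan is to deduce the corollary directly from Proposition \ref{P:dim} together with the two defining conditions of a fundamental pair in Definition \ref{D:FP}. The whole point is that the combinatorial hypotheses built into (\textsf{FP2}) are exactly what is needed to force the Whittaker dimension for the larger group down to $1$, and the hypotheses of Proposition \ref{P:dim} to apply to both groups.

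First I would treat $\wt{\GL}_R^{(n)}$. By (\textsf{FP2}), we have $n_\alpha = R$ and $Y_{\GL_R, Q,n} = n_\alpha \cdot Y_{\GL_R}$. In particular $n_\alpha = R \ge R$ (indeed $\ge r$), so the genericity clause of Proposition \ref{P:dim} applies, and moreover the last assertion of that proposition gives
$$\dim \Wh(\Theta(\wt{\GL}_R^{(n)}, \chi)) = \binom{n_\alpha}{R} = \binom{R}{R} = 1.$$
Hence $\Theta(\wt{\GL}_R^{(n)}, \chi)$ is distinguished for every exceptional $\chi$ of $\wt{\GL}_R^{(n)}$; note here that $R \ge 1$ guarantees an exceptional character exists (it is unramified and determined on the relevant generators by $\chi(\wt{h}_\alpha(\varpi^{n_\alpha})) = q^{-1}$).

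For the genericity of $\Theta(\wt{\GL}_r^{(n)}, \mu)$, I would invoke Proposition \ref{P:key-d}: since $(\wt{\GL}_r^{(n)}, \wt{\GL}_R^{(n)})$ is a fundamental pair, the characterization there gives $n_\alpha > r$ (for $\wt{\GL}_r^{(n)}$, whose parameters $\p,\bq$ agree with those of $\wt{\GL}_R^{(n)}$ by (\textsf{FP1}), so $n_\alpha$ is the same integer $R$). In particular $n_\alpha \ge r$, so the genericity clause of Proposition \ref{P:dim} applied to $\wt{\GL}_r^{(n)}$ shows $\mca{O}_0 \in \OF_{Q,n}$, hence $\dim \Wh(\Theta(\wt{\GL}_r^{(n)}, \mu)) \ge 1$, i.e. $\Theta(\wt{\GL}_r^{(n)}, \mu)$ is generic.

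I do not anticipate a serious obstacle here: the statement is essentially a packaging of Proposition \ref{P:dim} under the fundamental-pair hypotheses. The only mild point to be careful about is bookkeeping between the integer $n_\alpha$ for $\wt{\GL}_r^{(n)}$ and for $\wt{\GL}_R^{(n)}$ — but these coincide because $n_\alpha$ depends only on $n$ and $Q(\alpha^\vee) = 2\p - \bq$, which is unchanged between the two groups by (\textsf{FP1}). Everything else is a direct substitution into the displayed formulas of Proposition \ref{P:dim}.
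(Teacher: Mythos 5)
Your proposal is correct and follows essentially the same route as the paper: the paper's proof is exactly the observation that Proposition \ref{P:dim} combined with $n_\alpha = R > r$ and $Y_{\GL_R,Q,n}=n_\alpha\cdot Y_{\GL_R}$ from (\textsf{FP2}) yields $\binom{R}{R}=1$ for the larger group and genericity for the smaller one. Your extra appeal to Proposition \ref{P:key-d} is harmless but unnecessary, since $n_\alpha=R>r$ already comes directly from (\textsf{FP2}) and the definition of a pair.
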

\begin{proof}
In view of Proposition \ref{P:dim}, it follows from the condition $n_\alpha=R >r$ in (\textsf{FP2}) for a fundamental pair, see Definition \ref{D:FP}.
\end{proof}

\begin{rmk}
Our convention of Whittaker functions agrees with \cite{Mc1}, \cite{Mc2}, \cite{Suz1} and \cite{Ga2}, and is slightly different from that in \cite{KP} and \cite{Suz3}. One main difference is that in \cite{KP} and \cite{Suz3}, the authors do not fix the central character of $\wt{\GL}_r^{(n)}$, and therefore for them Whittaker functionals are considered simultaneously  for all possible central characters, see \cite[\S I.3]{KP} and \cite[\S 3.1]{Suz3} for details. This explains the difference of formulas for the dimension of Whittaker functionals as in \cite[pg.735]{Suz3} compared to Proposition \ref{P:dim} above.
\end{rmk}

\vskip 5pt
Now we give an explicit description of elements of $\Wh (\Theta(\wt{\GL}_r^{(n)}, \chi))$.  Recall that in \cite{Ga2}, for any $y\in Y_{\GL_r}$ and $\alpha\in \Delta_{\GL_r}$, we write
$$\mathbf{t}_{\GL_r}(\w_\alpha, y):=q^{k_{y,\alpha}-1} \cdot \g(\angb{y_\rho}{\alpha}Q(\alpha^\vee))^{-1},$$
where
$$k_{y,\alpha}=\ceil{\frac{\angb{y}{\alpha}}{n_\alpha}}.$$
Consider any $Y_{\GL_r, Q,n}$-free orbit $\mca{O}_y \in \OF_{Q,n}$. For $\w=\w_k ...\w_2 \w_1\in W_{\GL_r}$ in a minimum decomposition, consider
$$\mathbf{t}_{\GL_r}(\w, y):=\prod_{i=1}^k \mathbf{t}_{\GL_r}(\w_i, \w_{i-1}...\w_1[y]),$$
which is well-defined and independent of the minimum expansion of $\w$ (see \cite[Proposition 3.10]{Ga2}).

Any $\mca{O}_y \in \OF_{Q,n}$ gives rise to an element $\cc_{\mca{O}_y} \in \Ftn(i(\chi))$ as follows (cf. \cite[\S 3.4]{Ga2}):
\begin{enumerate}
\item[$\bullet$] First, let $\cc_{\mca{O}_y}(\s_y)=1$. For any $\alpha\in \Delta_{\GL_r}$, define
$$\cc_{\mca{O}_y}(\s_{\w_\alpha[y]}):= \mathbf{t}_{\GL_r}(\w_\alpha, y) \cdot \cc_{\mca{O}_y}(\s_y)=\mathbf{t}_{\GL_r}(\w_\alpha, y).$$
Inductively, one can define 
\begin{equation} \label{F:cc}
\cc_{\mca{O}_y}(\s_{\w[y]}):=\mathbf{t}_{\GL_r}(\w, y)
\end{equation}
for any $\w\in W_{\GL_r}$. It is well-defined and independent of the minimum decomposition of $\w$. 
\item[$\bullet$] Second, extend $\cc_{\mca{O}_y}$ to a function on $\wt{T}$ by
$$\cc_{\mca{O}_y}(\s_{\w[y]} \cdot \wt{z}) =  \cc_{\mca{O}_y}(\s_{\w[y]}) \cdot \chi(\wt{z}), \ \wt{z} \in \wt{A}.$$
and
$$\cc_{\mca{O}_y}(\wt{t})=0 \text{ if } \wt{t} \notin \bigcup_{\w\in W_{\GL_r}} \s_{\w[y]}  \cdot \wt{A}.$$
\end{enumerate}
Following this, denote
$$\mca{W}_{\mca{O}_y}^{\GL_r}:=\mca{W}_{\cc_{\mca{O}_y}}^{\GL_r} \in  \Wh (\Theta(\wt{\GL}_r^{(n)}, \chi)).$$  

Let $\set{\mca{O}_{y_i} \in \OF_{Q,n}: 1\le i\le \val{\wp(\OF_{Q,n})} }$
be a set of representatives for $\wp(\OF_{Q,n})$. The set $\set{\cc_{\mca{O}_{y_i}}}_i$ gives rises to a basis $\set{\mca{W}^{\GL_r}_{\mca{O}_{y_i}}}_i$ for $\Wh(\Theta(\wt{\GL}_r^{(n)}, \chi))$. Assume that $n_\alpha\ge r$, we will concentrate on $\mca{O}_0 \in \OF_{Q,n}$, and consider properties of $\cc_{\mca{O}_0}$, which determines $\mca{W}^{\GL_r}_{\mca{O}_0}$ by Proposition \ref{P:Wh-Th}.
\vskip 5pt

The following result is an analogue of \cite[Proposition 4]{Suz1} for Kazhdan-Patterson coverings.
\begin{prop} \label{P:cc-val}
Assume that $n_\alpha \ge r$. Let $\w\in W_{\GL_r}$ be any element. Then $\cc_{\mca{O}_0}(\s_{\w[0]}) = \cc_{\mca{O}_0}(\s_{\widehat{\w}[0]})$ with $\val{\cc_{\mca{O}_0}(\s_{\w[0]})}=q^{-l(\w)/2}$.
\end{prop}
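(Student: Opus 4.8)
The plan is to compute $\cc_{\mca{O}_0}(\s_{\w[0]})$ directly from the identity $\cc_{\mca{O}_0}(\s_{\w[0]})=\mathbf{t}_{\GL_r}(\w,0)$ of (\ref{F:cc}), using a minimal decomposition $\w=\w_k\cdots\w_1$ with $\w_i=\w_{\alpha_i}$, $\alpha_i\in\Delta_{\GL_r}$, so that $\mathbf{t}_{\GL_r}(\w,0)=\prod_{i=1}^{k}\mathbf{t}_{\GL_r}(\w_i,z_i)$ with $z_i:=\w_{i-1}\cdots\w_1[0]$. Since $n_\alpha\ge r$, Proposition \ref{P:dim} shows $\mca{O}_0\in\OF_{Q,n}$, so this product is well defined, all $z_i$ lie in the free orbit $\mca{O}_0$, and in particular $\angb{(z_i)_\rho}{\alpha_i}\ne 0$ for every $i$.

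The first step is to analyze a single factor $\mathbf{t}_{\GL_r}(\w_i,z_i)=q^{k_{z_i,\alpha_i}-1}\cdot\g(\angb{(z_i)_\rho}{\alpha_i}Q(\alpha^\vee))^{-1}$. Because an initial segment of a minimal decomposition is again minimal, $\w_{\alpha_i}$ strictly increases the length of $g_i:=\w_{i-1}\cdots\w_1$, which forces $g_i^{-1}(\alpha_i)$ to be a positive root of $\GL_r$; hence $\angb{(z_i)_\rho}{\alpha_i}=-\angb{\rho_{\GL_r}}{g_i^{-1}(\alpha_i)}$. For $\GL_r$ every positive root pairs with $\rho_{\GL_r}$ to an integer in $\{1,\dots,r-1\}$, so $\angb{(z_i)_\rho}{\alpha_i}\in\{-(r-1),\dots,-1\}$ and therefore $\angb{z_i}{\alpha_i}=\angb{(z_i)_\rho}{\alpha_i}+1\in\{-(r-2),\dots,0\}$, which gives $k_{z_i,\alpha_i}=\ceil{\angb{z_i}{\alpha_i}/n_\alpha}=0$ since $r-2<r\le n_\alpha$. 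Moreover $0<|\angb{(z_i)_\rho}{\alpha_i}|<n_\alpha$, so $n_\alpha\nmid\angb{(z_i)_\rho}{\alpha_i}$; writing $d=\gcd(n,Q(\alpha^\vee))$ and using $\gcd(n/d,Q(\alpha^\vee)/d)=1$, this is equivalent to $n\nmid\angb{(z_i)_\rho}{\alpha_i}Q(\alpha^\vee)$. Consequently $\mathbf{t}_{\GL_r}(\w_i,z_i)=q^{-1}\g(\angb{(z_i)_\rho}{\alpha_i}Q(\alpha^\vee))^{-1}$ with $|\g(\angb{(z_i)_\rho}{\alpha_i}Q(\alpha^\vee))|=q^{-1/2}$ by (\ref{F:gauss}); hence each factor has absolute value $q^{-1/2}$, and multiplying the $l(\w)=k$ factors yields $|\cc_{\mca{O}_0}(\s_{\w[0]})|=q^{-l(\w)/2}$.

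For the identity $\cc_{\mca{O}_0}(\s_{\w[0]})=\cc_{\mca{O}_0}(\s_{\widehat{\w}[0]})$ the plan is to exploit Lemma \ref{L:01}: the word $\widehat{\w}=\widehat{\w_k}\cdots\widehat{\w_1}$ with $\widehat{\w_i}=\w_{\beta_i}$, $\beta_i^\vee=-\w_{0,r}(\alpha_i^\vee)$, is a minimal decomposition of $\widehat{\w}$, since conjugation by $\w_{0,r}$ is a length-preserving automorphism of $W_{\GL_r}$. Its intermediate orbit points $z_i':=\widehat{\w_{i-1}}\cdots\widehat{\w_1}[0]$ satisfy $z_i'=-\w_{0,r}(z_i)$, by a short computation using $\w_{0,r}(\rho_{\GL_r})=-\rho_{\GL_r}$ and $\w_{0,r}^2=\id$. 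Then Weyl-invariance of the pairing $\angb{-}{-}$ and of $Q$ (again with $\w_{0,r}(\rho_{\GL_r})=-\rho_{\GL_r}$) give $\angb{z_i'}{\beta_i}=\angb{z_i}{\alpha_i}$, $\angb{(z_i')_\rho}{\beta_i}=\angb{(z_i)_\rho}{\alpha_i}$, $Q(\beta_i^\vee)=Q(\alpha_i^\vee)$ and $n_{\beta_i}=n_\alpha$, whence $\mathbf{t}_{\GL_r}(\widehat{\w_i},z_i')=\mathbf{t}_{\GL_r}(\w_i,z_i)$ term by term; taking the product gives $\mathbf{t}_{\GL_r}(\widehat{\w},0)=\mathbf{t}_{\GL_r}(\w,0)$, i.e.\ $\cc_{\mca{O}_0}(\s_{\widehat{\w}[0]})=\cc_{\mca{O}_0}(\s_{\w[0]})$.

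The main obstacle is the step-by-step control in the second paragraph: establishing $k_{z_i,\alpha_i}=0$ and $n\nmid\angb{(z_i)_\rho}{\alpha_i}Q(\alpha^\vee)$ all along a minimal decomposition. This is precisely where the hypothesis $n_\alpha\ge r$ and the type-$A$ feature $|\angb{\rho_{\GL_r}}{\beta}|\le r-1$ for every root $\beta$ enter in an essential way; once these are in hand, the symmetry under $\w\mapsto\widehat{\w}$ is formal, depending only on Lemma \ref{L:01} and $\w_{0,r}(\rho_{\GL_r})=-\rho_{\GL_r}$.
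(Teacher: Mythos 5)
Your proposal is correct and follows essentially the same route as the paper: the paper argues by induction on $l(\w)$, at each step using that $\w_1^{-1}(\alpha)$ is positive so that $\angb{\rho_{\GL_r}}{\w_1^{-1}(\alpha)}\in[1,r-1]$, forcing $k_{\w_1[0],\alpha}=0$ and the Gauss-sum factor to have modulus $q^{-1/2}$, and matching factors under $\w\mapsto\widehat{\w}$ via $\beta^\vee=-\w_{0,r}(\alpha^\vee)$ and $\w_{0,r}(\rho_{\GL_r})=-\rho_{\GL_r}$, exactly the ingredients you use. Your version merely unrolls the induction into a term-by-term product along a reduced word (with the explicit relation $z_i'=-\w_{0,r}(z_i)$) and spells out the gcd step $n_\alpha\nmid a \Rightarrow n\nmid aQ(\alpha^\vee)$, which the paper leaves implicit.
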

\begin{proof}
By definition (\ref{F:cc}), we have $\cc_{\mca{O}_0}(\s_{\w[0]})=\mathbf{t}(\w, 0)$ for any $\w\in W_{\GL_r}$. Thus, it suffices to show the statement for $\mathbf{t}(\w,0)$, and we do this by induction.

The statement is clearly true for $\w=\text{id}$. Let $\w=\w_\alpha \w_1$ be a decomposition with $\alpha\in \Delta_{\GL_r}$ and $l(\w)= 1 + l(\w_1)$. We assume that the result holds for $\w_1\in W_{\GL_r}$, i.e. $\mathbf{t}(\w_1,0) = \mathbf{t}(\widehat{\w_1}, 0)$ with absolute value $q^{-l(\w_1)/2}$.

We first show 
$$\mathbf{t}_{\GL_r}(\w_\alpha, \w_1[0])=\mathbf{t}_{\GL_r}(\widehat{\w_\alpha}, \widehat{\w_1}[0]).$$
Recall that for any $\alpha\in \Delta_{\GL_r}$ and $y\in Y_{\GL_r}$, one has
$$\mathbf{t}_{\GL_r}(\w_\alpha, y)=q^{k_{y,\alpha}-1} \cdot \g(\angb{y_\rho}{\alpha}Q(\alpha^\vee))^{-1},$$
where
$$k_{y,\alpha}=\ceil{\frac{\angb{y}{\alpha}}{n_\alpha}}.$$

By Lemma \ref{L:01}, we have $\widehat{\w_\alpha}=\w_{\beta}$ with $\beta^\vee=-\w_{0,r}(\alpha^\vee)$. Therefore, we show the equality $\mathbf{t}_{\GL_r}(\w_\alpha, \w_1[0])=\mathbf{t}_{\GL_r}(\widehat{\w_\alpha}, \widehat{\w_1}[0])$ in two steps:
\begin{enumerate}
\item[$\bullet$] First, we have
\begin{align*}
& \angb{ \widehat{\w_1}[0]  }{\beta} \\
=& \angb{ \w_{0,r} \w_1 \w_{0,r}^{-1}[0]  }{ -\w_{0,r}(\alpha)} \\
=& \angb{ \w_{0,r} \w_1 [2\rho_{\GL_r}]  }{ -\w_{0,r}(\alpha)} \\
=& - \angb{ \w_{0,r} \w_1(\rho_{\GL_r}) + \rho_{\GL_r}  }{ \w_{0,r}(\alpha)} \\
=& - \angb{ \w_0 \w_0 \w_1(\rho_{\GL_r}) + \w_0(\rho_{\GL_r})  }{ \alpha} \\
=& \angb{ \w_1(0-\rho_{\GL_r}) + \rho_{\GL_r}  }{ \alpha} \\
=& \angb{ \w_1[0]  }{\alpha}.
\end{align*}
This gives $k_{\w_1[0], \alpha}= k_{\widehat{\w_1}[0], \beta}$.
\item[$\bullet$] Second, it can be checked easily that
$$\angb{ \rho_{\GL_r} }{ \beta } = -\angb{ \w_0(\rho_{\GL_r})  }{\alpha}=\angb{\rho_{\GL_r} }{\alpha}.$$
It follows $\angb{ \w_1[0]_{\rho} }{ \alpha } = \angb{ \widehat{\w_1}[0]_\rho }{ \beta }$ and therefore
$$\g(\angb{ \w_1[0]_{\rho} }{ \alpha }Q(\alpha^\vee))=\g(\angb{ \widehat{\w_1}[0]_\rho }{ \beta }Q(\beta^\vee)),$$
since $Q(\alpha^\vee)=Q(\beta^\vee)$.
\end{enumerate}
This completes the proof that $\mathbf{t}_{\GL_r}(\w_\alpha, \w_1[0])=\mathbf{t}_{\GL_r}(\widehat{\w_\alpha}, \widehat{\w_1}[0])$.

\vskip 5pt
It remains to show $\val{ \mathbf{t}_{\GL_r}(\w_\alpha, \w_1[0]) }=q^{-1/2}$. For this, by induction, it suffices to show that
$$\ceil{ \frac{ \angb{\w_1[0] }{\alpha} }{n_\alpha} }=0 \text{ and } \val{ \g( \angb{\w_1[0]_\rho}{\alpha} \cdot Q(\alpha^\vee)) }= q^{-1/2}.$$
Since $\w_1[0]=\rho - \w_1(\rho)$, one has 
$$\angb{\w_1[0]}{\alpha} = 1- \angb{\rho}{\w_1^{-1}(\alpha)}.$$
Now $l(\w_\alpha \cdot \w_1)=1 + l(\w_1)$ implies  that $\w_1^{-1}(\alpha) \in \Phi_{\GL_r}^+$ (cf. \cite[p. 170, Corollary 2]{Bou}). Thus,
$$\angb{\rho}{\w_1^{-1}(\alpha)} \in [1, r-1].$$
It follows from $r\le n_\alpha$ that
$$\angb{\w_1[0]}{\alpha} \in [-(n_\alpha -2), 0]$$
and therefore
$$\ceil{ \frac{ \angb{\w_1[0] }{\alpha} }{n_\alpha} }=0.$$
Moreover, since $\angb{\w_1[0]_\rho}{\alpha} \in [-(n_\alpha -1), -1]$, it follows from (\ref{F:gauss}) that 
$$\val{ \g( \angb{\w_1[0]_\rho}{\alpha} \cdot Q(\alpha^\vee)) }= q^{-1/2}.$$
Therefore, $\val{ \mathbf{t}_{\GL_r} (\w_\alpha, \w_1[0]) }=q^{-1/2}$. The proof  is completed.
\end{proof}

Now we consider a fundamental pair $(\wt{\GL}_r^{(n)}, \wt{\GL}_R^{(n)})$. To differentiate the groups under consideration, we use subscript or superscript for the notations, for example, $\wp_{\GL_R}$ and $\OF_{\GL_R, Q,n}$ etc. By Proposition \ref{P:dim}, we have
$$\wp_{\GL_R}({\OF_{\GL_R, Q,n}}) = \set{\wp_{\GL_R} (\mca{O}_0^{\GL_R})},$$
where $\mca{O}_0^{\GL_R}$ is the $W_{\GL_R}$-orbit of $0$, which is $Y_{\GL_R, Q,n}$-free in this case. It gives rise to the unique Whittaker model $\mca{W}_{\mca{O}_0}^{\GL_R}$ of $\Theta(\wt{\GL}_R, \chi)$ associated to $\cc_{\mca{O}_0}^{\GL_R}$.

\begin{cor} \label{C:key1}
For any $\w\in W_{\GL_r}$, one has
$$\cc^{\GL_r}_{\mca{O}_0}(\s_{\widehat{\w}[0]})=\cc^{\GL_R}_{\mca{O}_0}(\s_{\widehat{\phi(\w)}[0]}) \text{ and } \val{ \cc^{\GL_r}_{\mca{O}_0}(\s_{\widehat{\w}[0]}) }=q^{-l(\w)/2}.$$
In particular, $\overline{\cc^{\GL_R}_{\mca{O}_0}(\s_{\widehat{\phi(\w)}[0]})}\cdot \cc^{\GL_r}_{\mca{O}_0}(\s_{\widehat{\w}[0]})=q^{-l(\w)}$, where $\overline{f}$ denotes the complex conjugate of any complex-valued function $f$.
\end{cor}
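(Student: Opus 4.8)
The plan is to deduce Corollary~\ref{C:key1} from the two ingredients already available: Proposition~\ref{P:cc-val}, which handles the single group $\wt{\GL}_r^{(n)}$ and provides both the symmetry under $\w \mapsto \widehat{\w}$ and the absolute value $q^{-l(\w)/2}$, and the combinatorial compatibility of the twisted Weyl actions under the embedding $\phi$ established in \S\ref{S:W-act}. First I would apply Proposition~\ref{P:cc-val} to $\wt{\GL}_r^{(n)}$: since $(\wt{\GL}_r^{(n)}, \wt{\GL}_R^{(n)})$ is a fundamental pair we have $n_\alpha = R > r$, so the hypothesis $n_\alpha \ge r$ is satisfied, giving $\cc^{\GL_r}_{\mca{O}_0}(\s_{\widehat{\w}[0]}) = \cc^{\GL_r}_{\mca{O}_0}(\s_{\w[0]})$ with absolute value $q^{-l(\w)/2}$. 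The same proposition applies verbatim to $\wt{\GL}_R^{(n)}$ (again $n_\alpha = R \ge R$), so $\cc^{\GL_R}_{\mca{O}_0}(\s_{\widehat{\phi(\w)}[0]}) = \cc^{\GL_R}_{\mca{O}_0}(\s_{\phi(\w)[0]})$ with absolute value $q^{-l(\phi(\w))/2} = q^{-l(\w)/2}$, using that $\phi\colon W_{\GL_r} \to W_{\GL_R}$ preserves length (it sends a minimal decomposition of $\w$ into simple reflections to a minimal decomposition of $\phi(\w)$, since the corresponding roots remain simple in $\GL_R$).

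The heart of the matter is then the equality $\cc^{\GL_r}_{\mca{O}_0}(\s_{\w[0]}) = \cc^{\GL_R}_{\mca{O}_0}(\s_{\phi(\w)[0]})$. I would prove this by induction on $l(\w)$ along a minimal decomposition $\w = \w_\alpha \w_1$ with $\alpha \in \Delta_{\GL_r}$, using the multiplicative formula $\cc_{\mca{O}_0}(\s_{\w[0]}) = \mathbf{t}(\w,0) = \prod_i \mathbf{t}(\w_i, \w_{i-1}\cdots\w_1[0])$ from (\ref{F:cc}). For the inductive step it suffices to show, for $\alpha \in \Delta_{\GL_r}$ and $z := \w_1[0] \in Y_{\GL_r}$,
\[
\mathbf{t}_{\GL_r}(\w_\alpha, z) = \mathbf{t}_{\GL_R}(\phi(\w_\alpha), \phi(z)).
\]
By the definition $\mathbf{t}(\w_\alpha, y) = q^{k_{y,\alpha}-1}\cdot \g(\angb{y_\rho}{\alpha}Q(\alpha^\vee))^{-1}$, one has to match three pieces. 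The Gauss-sum factor matches because the pairing $\angb{\phi(z)_\rho}{\alpha}$ computed in $\GL_R$ equals $\angb{z_\rho}{\alpha}$ computed in $\GL_r$ — indeed $\rho_{\GL_R} - \rho_{\GL_r}$ is orthogonal to every $\alpha \in \Delta_{\GL_r}$ (each such $\alpha$ pairs to zero with $e_j$ for $j > r$ and with $e_j^*$ for $j > r$), and $Q(\alpha^\vee) = 2\p - \bq$ is the same for both groups by (\textsf{FP1}). The factor $q^{k-1}$ matches because $n_\alpha$ is the same integer $R$ for both groups (by (\textsf{FP2}) on the $\GL_R$ side and the fundamental-pair hypothesis that both arise from the same $\p,\bq$), and $\angb{\phi(z)}{\alpha} = \angb{z}{\alpha}$ for the same orthogonality reason, hence $k_{z,\alpha} = k_{\phi(z),\alpha}$. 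Finally, one needs $\phi(\w_{i-1}\cdots\w_1[0]) = \phi(\w_{i-1})\cdots\phi(\w_1)[\phi(0)]$, i.e. that $\phi$ intertwines the twisted actions; but this is exactly the equivariance $\w[y] = \phi(\w)[y]$ proven in \S\ref{S:W-act}, applied with $y = 0$ along the partial products. Assembling the three matches gives the inductive step.

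Combining, $\cc^{\GL_r}_{\mca{O}_0}(\s_{\widehat{\w}[0]}) = \cc^{\GL_r}_{\mca{O}_0}(\s_{\w[0]}) = \cc^{\GL_R}_{\mca{O}_0}(\s_{\phi(\w)[0]}) = \cc^{\GL_R}_{\mca{O}_0}(\s_{\widehat{\phi(\w)}[0]})$, which is the first asserted equality; the absolute value $q^{-l(\w)/2}$ was already recorded. The last sentence is then immediate: since $|\cc^{\GL_R}_{\mca{O}_0}(\s_{\widehat{\phi(\w)}[0]})| = q^{-l(\w)/2}$ and $\cc^{\GL_r}_{\mca{O}_0}(\s_{\widehat{\w}[0]}) = \cc^{\GL_R}_{\mca{O}_0}(\s_{\widehat{\phi(\w)}[0]})$, the product $\overline{\cc^{\GL_R}_{\mca{O}_0}(\s_{\widehat{\phi(\w)}[0]})}\cdot \cc^{\GL_r}_{\mca{O}_0}(\s_{\widehat{\w}[0]}) = |\cc^{\GL_R}_{\mca{O}_0}(\s_{\widehat{\phi(\w)}[0]})|^2 = q^{-l(\w)}$. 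I expect the main obstacle to be purely bookkeeping: carefully verifying that the quantities $n_\alpha$, $Q(\alpha^\vee)$, and the relevant pairings genuinely do not change when one passes from $\GL_r$ to $\GL_R$ via $\phi$ — in particular making sure the orthogonality of $\rho_{\GL_R}-\rho_{\GL_r}$ to $\Delta_{\GL_r}$ is invoked correctly and that (\textsf{FP1}), (\textsf{FP2}) are used to pin down the shared parameters. None of these steps is deep, but the index-chasing must be done with care.
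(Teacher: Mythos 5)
Your proposal is correct and follows essentially the same route as the paper: reduce via Proposition~\ref{P:cc-val} to the equality $\cc^{\GL_r}_{\mca{O}_0}(\s_{\w[0]})=\cc^{\GL_R}_{\mca{O}_0}(\s_{\phi(\w)[0]})$ and then use the compatibility $\w[0]=\phi(\w)[0]$ under $\phi$ from \S\ref{S:W-act}. Your explicit matching of the factors $q^{k-1}$, $n_\alpha$, $Q(\alpha^\vee)$ and the pairings (using that $\rho_{\GL_R}-\rho_{\GL_r}$ is orthogonal to $\Delta_{\GL_r}$) is exactly the verification the paper leaves implicit in its one-line reduction, so no further changes are needed.
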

\begin{proof}
By Proposition \ref{P:cc-val}, it suffices to show
$$\cc^{\GL_r}_{\mca{O}_0}(\s_{\w[0]})=\cc^{\GL_R}_{\mca{O}_0}(\s_{\phi(\w)[0])}).$$
For this, one is reduced to show $\w[0] = \phi(\w)[0]$, which clearly holds as noted in \S \ref{S:W-act}.
\end{proof}

\subsection{Generalized Bump-Hoffstein conjecture} 
In this subsection, we first consider a covering roup $\wt{\GL}_r^{(n)}$ satisfying both
\begin{equation} \label{F:C1}
 n_\alpha\cdot  Y_{\GL_r} \subset Y_{\GL_r, Q,n}
\end{equation}
and 
\begin{equation} \label{F:C2}
n|(\p \cdot n_\alpha).
\end{equation}
With (\ref{F:C1}), we have the inclusions $Y_{\GL_r,Q,n}^{sc} \subseteq n_\alpha\cdot  Y_{\GL_r} \subseteq Y_{\GL_r, Q,n}$. Denote by $\mbf{T}_{Q,n, sc}$, $\mbf{T}_{n_\alpha}$ and $\mbf{T}_{Q,n}$ respectively the split torus over $F$ associated to these lattices. We also denote by $T_{Q,n, sc}$, $T_{n_\alpha}$ and $T_{Q,n}$ their $F$-rational points.

Note that $T=Y_{\GL_r}\otimes F^\times$. Let $T_{Q,n}^\dag$ be the image of the isogeny $T_{Q,n} \to T$, induced from the injection $Y_{\GL_r, Q,n} \to Y_{\GL_r}$. It is known that $Z(\wt{T}) \subset \wt{T}$ is the preimage of $T_{Q,n}^\dag$ with respect to the quotient map $\wt{T} \to T$ (cf. \cite{We1}). We also denote by $T_{n_\alpha}^\dag \subset T$ the image of $T_{n_\alpha} \to T$, induced from the embedding $n_\alpha Y_{\GL_r} \to Y_{\GL_r}$. 

\begin{lm} \label{L:split}
Assume that $\wt{\GL}_r^{(n)}$ satisfies (\ref{F:C1}) and (\ref{F:C2}). Then
$$D(y, z) \equiv D(z, y) \equiv 0 \mod n$$
for any $y\in n_\alpha Y_{\GL_r}$ and $z\in Y_{\GL_r}$. Therefore, the map given by
$$\bbmu_n \times T_{n_\alpha}^\dag \simeq \wt{T_{n_\alpha}^\dag}, \quad (\zeta, a^{n_\alpha y}) \mapsto \zeta \cdot \s(a^{n_\alpha y})$$
for any $a\in F^\times$ and $y\in Y_{\GL_r}$ is a group isomorphism.
\end{lm}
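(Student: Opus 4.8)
The plan is to prove the two assertions in the order stated, since the isomorphism rests on the congruence for $D$. For the congruence I would expand in the standard basis: write $y=n_\alpha\sum_i a_i e_i$ with $a_i\in\Z$ and $z=\sum_j b_j e_j$ with $b_j\in\Z$. Bilinearity of $D$ together with the explicit shape of the bisector in (\ref{F:D}) --- so that $D(e_i,e_i)=Q(e_i)=\p$, $D(e_i,e_j)=\bq$ for $i>j$, and $D(e_i,e_j)=0$ for $i<j$ --- gives
\[
D(y,z)=n_\alpha\Big(\p\sum_i a_i b_i+\bq\sum_{i>j}a_i b_j\Big),
\]
and the symmetric computation (equivalently, the bisector identity $D(y,z)+D(z,y)=B_Q(y,z)$ combined with $B_Q(y,z)=n_\alpha\big(2\p\sum_i a_i b_i+\bq\sum_{i\ne j}a_i b_j\big)$) yields the analogous expression for $D(z,y)$. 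Hence both lie in $n\Z$ as soon as $n\mid n_\alpha\p$ and $n\mid n_\alpha\bq$. The first divisibility is exactly the hypothesis (\ref{F:C2}). The second is not displayed among the hypotheses but is forced by (\ref{F:C1}): since $n_\alpha e_1\in n_\alpha Y_{\GL_r}\subseteq Y_{\GL_r,Q,n}$, evaluating the defining condition of (\ref{YQn}) at any index $j\neq 1$ gives $\bq\cdot n_\alpha\in n\Z$ (here $r\ge 2$; the case $r=1$ is immediate, as there are no off-diagonal terms in $D$). This completes the first part.

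For the isomorphism, I would first observe that (\ref{F:C1}) gives $T_{n_\alpha}^\dag\subseteq T_{Q,n}^\dag$, so the preimage $\wt{T_{n_\alpha}^\dag}$ of $T_{n_\alpha}^\dag$ under $\wt{T}\to T$ lies inside $Z(\wt{T})$; in particular it is an abelian central extension of $T_{n_\alpha}^\dag$ by $\bbmu_n$, and it suffices to exhibit a group-theoretic splitting. I claim $\s$ restricted to $T_{n_\alpha}^\dag$ is one. Since $T_{n_\alpha}^\dag$ is generated by the elements $(n_\alpha e_i)(c)$, $c\in F^\times$, it is enough to check that (\ref{F:s}) applied to any two such factors has trivial cocycle: for $(n_\alpha e_i)(c)$ and $(n_\alpha e_j)(c')$ the cocycle is $(c,c')_n^{\,D(n_\alpha e_i,\,n_\alpha e_j)}$, and by the first part the exponent $D(n_\alpha e_i,n_\alpha e_j)$ lies in $n\Z$ (apply it with $y=n_\alpha e_i$ and $z=n_\alpha e_j$), so this equals $1$ because $(-,-)_n$ is valued in $\bbmu_n$. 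Hence $\s$ is multiplicative on $T_{n_\alpha}^\dag$, and $(\zeta,t)\mapsto\zeta\cdot\s(t)$ is then a group isomorphism $\bbmu_n\times T_{n_\alpha}^\dag\to\wt{T_{n_\alpha}^\dag}$, with inverse $\wt{t}\mapsto(\wt{t}\cdot\s(\bar t)^{-1},\,\bar t)$, where $\bar t$ denotes the image of $\wt{t}$ in $T$.

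The only genuine obstacle is surfacing the divisibility $n\mid n_\alpha\bq$ out of (\ref{F:C1}): it is this, rather than (\ref{F:C2}), that kills the $\bq$-part of $D$ modulo $n$, and the statement fails without it. Everything else is routine bookkeeping; the one point to handle with a little care is passing from (\ref{F:s}), which is stated for one-parameter subgroups, to products of the generators $(n_\alpha e_i)(c)$, i.e.\ verifying that the cocycle of $\s$ on $T_{n_\alpha}^\dag$ is built out of precisely the symbols $(c,c')_n^{\,D(n_\alpha e_i,\,n_\alpha e_j)}$, together with the harmless degenerate case $r=1$.
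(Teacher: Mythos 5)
Your proof is correct and follows essentially the same route as the paper: reduce the congruence to the two divisibilities $n\mid \p\cdot n_\alpha$ and $n\mid \bq\cdot n_\alpha$, then use (\ref{F:s}) to see that the trivialized cocycle makes $\s$ multiplicative on $T_{n_\alpha}^\dag$, giving the splitting. The only divergence is where $n\mid \bq\cdot n_\alpha$ comes from: you extract it from (\ref{F:C1}) by testing $n_\alpha e_1$ against the description (\ref{YQn}), whereas the paper obtains it from (\ref{F:C2}) alone, combining $n\mid \p\cdot n_\alpha$ with the automatic divisibility $n\mid (2\p-\bq)\cdot n_\alpha$ (which holds by the very definition of $n_\alpha$). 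Both derivations are valid, but your closing aside that it is (\ref{F:C1}) ``rather than (\ref{F:C2})'' that kills the $\bq$-part of $D$ is not accurate, since (\ref{F:C2}) already forces $n\mid \bq\cdot n_\alpha$; apart from that remark, your bookkeeping (the $r=1$ case, centrality via $T_{n_\alpha}^\dag\subseteq T_{Q,n}^\dag$, and extending the cocycle computation to products of the generators) is consistent with, and slightly more detailed than, the paper's argument.
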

\begin{proof}
To prove the first assertion, by the definition of $D$ in (\ref{F:D}), it suffices to show that the numbers
$$n_\alpha\cdot D(e_i, e_i) \text{ and } n_\alpha\cdot D(e_i, e_j) \text{ for } i\ne j$$
are divisible by $n$. Note that $n| (2\p-\bq)\cdot n_\alpha$ always holds. Since $n|(\p\cdot n_\alpha)$, we have $n|(\bq\cdot n_\alpha)$ as well. The first assertion follows.

Now it follows from (\ref{F:s}) that 
$$\s(a^y)\cdot \s(b^z)= (a, b)_n^{D(y,z )}\cdot \s(a^y \cdot b^z)=\s(a^y \cdot b^z),$$
where $y, z\in n_\alpha\cdot Y_{\GL_r}$. Therefore, the second statement also holds.
\end{proof}

In fact, we have similar $T_{Q,n, sc}^\dag \subset T$ associated with $T_{Q,n,sc}$. By \cite{GG}, we have for general $\wt{\GL}_r^{(n)}$ (which does not necessarily satisfy (\ref{F:C1}) and (\ref{F:C2}) above) an isomorphism
$${\bbmu}_n \times T_{Q,n, sc}^\dag \simeq \wt{ T_{Q,n,sc}^\dag }$$
which is given by
$$(\zeta, \alpha^\vee(a^{n_\alpha})) \mapsto \zeta \cdot \wt{h}_\alpha(a^{n_\alpha}).$$
\vskip 5pt

In any case, we have $\wt{T_{n_\alpha}^\dag} \subset Z(\wt{T})$. For a genuine character $\chi: Z(\wt{T}) \to \C^\times$, we consider the linear character
$$\chi_{n_\alpha}: T_{n_\alpha} \to \C^\times$$
given by
\begin{equation} \label{F:lin}
\chi_{n_\alpha}(a^{n_\alpha y}):=\chi(\s(a^{n_\alpha y})) \text{ for all } y\in Y_{\GL_r}.
\end{equation}
By Lemma \ref{L:split}, $\chi_{n_\alpha}$ is well-defined.

\begin{lm} \label{L:exp-ch}
Assume that $\wt{\GL}_r^{(n)}$ satisfies (\ref{F:C1}) and (\ref{F:C2}). If $\chi \in \Hom(Z(\wt{T}), \C^\times)$ is an unramified exceptional character (in particular, $\chi(\wt{h}_\alpha(\varpi^{n_\alpha}))=q^{-1}$ for all $\alpha \in \Delta_{\GL_r}$), then we have
$$\chi_{n_\alpha}=\bigotimes_{i=1}^r |\cdot |^{\frac{r+1}{2} + \nu -i},$$
where $\nu \in \C$ is some complex number. Here $|\cdot |^s: (n_\alpha \Z e_i)\otimes F^\times \to \C^\times$ is the character sending $(n_\alpha e_i)(a)$ to $|a|^s$. 
\end{lm}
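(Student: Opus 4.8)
The plan is to exploit that $\chi_{n_\alpha}$, being unramified, is completely determined by the scalars $c_i := \chi_{n_\alpha}\big((n_\alpha e_i)(\varpi)\big) = \chi(\s_{n_\alpha e_i})$ for $1\le i\le r$, and then to read off the ratios $c_i/c_{i+1}$ from the exceptional condition. First I would check that $\chi_{n_\alpha}$ is unramified: for $u\in O_F^\times$ the element $\s\big((n_\alpha e_i)(u)\big)$ lies in $\wt{T_{n_\alpha}^\dag}\cap \wt{K}\subseteq Z(\wt{T})\cap K$ — here we use (\ref{F:C1}) so that $n_\alpha Y_{\GL_r}\subseteq Y_{\GL_r,Q,n}$ (hence $T_{n_\alpha}^\dag\subseteq T_{Q,n}^\dag$), together with the fact that $\s$ restricts to the chosen splitting over $T\cap K$ — and therefore $\chi$ kills it. Thus, identifying $T_{n_\alpha}$ with $(F^\times)^r$ via the basis $n_\alpha e_1,\dots,n_\alpha e_r$ (legitimate by Lemma \ref{L:split}), the character $\chi_{n_\alpha}$ is a product of unramified characters of $F^\times$, so $\chi_{n_\alpha}=\bigotimes_{i=1}^r |\cdot|^{s_i}$ with $c_i=q^{-s_i}$ for suitable $s_i\in\C$ (each defined modulo $2\pi\sqrt{-1}/\log q$).

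Next I would relate consecutive $c_i$. Since $\alpha_i^\vee=e_i-e_{i+1}$, in $T$ one has $h_{\alpha_i}(\varpi^{n_\alpha})=(n_\alpha e_i)(\varpi)\cdot(n_\alpha e_{i+1})(\varpi)^{-1}\in T_{n_\alpha}^\dag$; since $\s$ is a homomorphism on $T_{n_\alpha}^\dag$ by Lemma \ref{L:split} and $\wt{h}_\alpha(a)=\s(h_\alpha(a))$, this gives $\wt{h}_{\alpha_i}(\varpi^{n_\alpha})=\s_{n_\alpha e_i}\cdot\s_{n_\alpha e_{i+1}}^{-1}$. Evaluating $\chi$ and invoking the exceptional condition $\chi(\wt{h}_{\alpha_i}(\varpi^{n_\alpha}))=q^{-1}$ yields $c_i/c_{i+1}=q^{-1}$, that is, $s_{i+1}=s_i-1$ for $1\le i\le r-1$. (Note $n_\alpha$ is independent of $\alpha$ for $\GL_r$ since all coroots satisfy $Q(\alpha^\vee)=2\p-\bq$.)

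Finally I would set $\nu:=s_1-\tfrac{r-1}{2}$; the recursion then forces $s_i=s_1-(i-1)=\tfrac{r+1}{2}+\nu-i$ for every $i$, which is exactly the claimed form of $\chi_{n_\alpha}$. The whole argument is a one-parameter linear recursion once the problem is translated into the numbers $c_i$; the only steps requiring care are the reduction to each coordinate $F^\times$ (resting on Lemma \ref{L:split} and the compatibility of $\s$ with the splitting over $K$) and the identity $\wt{h}_{\alpha_i}(\varpi^{n_\alpha})=\s_{n_\alpha e_i}\cdot\s_{n_\alpha e_{i+1}}^{-1}$. I do not expect a genuine obstacle here; the conceptual content is merely that $\nu$ is the single free ``overall twist'' parameter and the exceptional condition rigidifies every remaining degree of freedom.
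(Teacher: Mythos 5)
Your proposal is correct and follows essentially the same route as the paper: the key step in both is that, thanks to Lemma \ref{L:split} (i.e.\ $D\equiv 0 \bmod n$ on $n_\alpha Y_{\GL_r}\times Y_{\GL_r}$), one has $\wt{h}_{\alpha_i}(\varpi^{n_\alpha})=\s_{n_\alpha e_i}\cdot \s_{n_\alpha e_{i+1}}^{-1}$, so the exceptional condition forces $\chi_{n_\alpha}$ on consecutive coordinates to differ by $|\cdot|$, and unramifiedness leaves only the single twist parameter $\nu$. Your added verification that $\chi_{n_\alpha}$ is unramified (via (\ref{F:C1}) and the compatibility of $\s$ with the splitting over $T\cap K$) is a point the paper simply asserts, but it is the same argument in substance.
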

\begin{proof}
Let $\alpha^\vee=e_i - e_{i+1} \in \Delta_{\GL_r}^\vee$ be a simple coroot for some $1\le i \le r-1$. For any $a\in F^\times$, we have
\begin{align*}
& \wt{h}_\alpha(a^{n_\alpha}) \cdot \s(a^{n_\alpha e_{i+1}}) \\
=& \s(a^{n_\alpha \alpha^\vee}) \cdot \s(a^{n_\alpha e_{i+1}}) \\
=& \s(a^{n_\alpha e_i}) \cdot (a, a)_n^{D(n_\alpha \alpha^\vee, n_\alpha e_{i+1})} \\
=& \s(a^{n_\alpha e_i}).
\end{align*}
Therefore, if $\chi$ is exceptional, we have 
$$\chi_{n_\alpha}(a^{n_\alpha \alpha^\vee})=|a|$$
for all $\alpha \in \Delta_{\GL_r}$.
As $\chi_{n_\alpha}$ is unramified, the statement follows.
\end{proof}

By abuse of notation, we also denote by $\nu$ the character
$$\nu: (n_\alpha \Z e)\otimes F^\times \to \C^\times$$
which sends $(n_\alpha e)(a)$ to $|a|^{\nu}$.

\vskip 10pt
From now, we will consider a fundamental pair of $n$-fold covers $(\wt{\GL}_r, \wt{\GL}_R)$ satisfying $n|(\p\cdot n_\alpha)$. Denote by
$$\phi: \wt{\GL}_r \into \wt{\GL}_R$$
the natural inclusion.

Let $\pi$ be a generic unramified representation of $\wt{\GL}_r$. Let $\mu$ be an unramified character of $Z(\wt{T}_r) \subset \wt{\GL}_r$ such that $I(\mu) \onto \pi$. For $\wt{\GL}_R$, let $\chi$ be an exceptional character of $Z(\wt{T}_R) \subset \wt{\GL}_R$. We may write
$$\chi_{n_\alpha}[\nu_R], \nu_R \in \C$$
for the linear character $\chi_{n_\alpha}$ given in Lemma \ref{L:exp-ch}.
\vskip 5pt

Let $\mca{W}^{\GL_r}$ be any Whittaker model of $\pi$, and $\mca{W}^{\GL_R}_{\mca{O}_0}$ the unique Whittaker model of the distinguished theta representation $\Theta(\wt{\GL}_R, \chi)$. Consider the Rankin-Selberg integral
$$\msc{Z}(s, \overline{\mca{W}_{\mca{O}_0}^{\GL_R}} \times \mca{W}^{\GL_r})=\int_{U_r \backslash \GL_r} \overline{\mca{W}_{\mca{O}_0}^{\GL_R}}( \phi(\wt{g})) \cdot \mca{W}^{\GL_r}(\wt{g}) \cdot  |\det(g)|^{s-\frac{R-r}{2}} dg,$$
where $\overline{\mca{W}_{\mca{O}_0}^{\GL_R}}$ is the complex conjugate of $\mca{W}_{\mca{O}_0}^{\GL_R}$. Note that the integrand is a well-defined function on $U_r\backslash \GL_r$.

\vskip 5pt
\begin{conj}[Bump-Hoffstein] \label{C:GBH}
Let $(\wt{\GL}_r, \wt{\GL}_R)$ be a fundamental pair of $n$-fold covering groups satisfying $n|(\p\cdot n_\alpha)$. Let $\pi$ be a generic unramified representation of $\wt{\GL}_r$ such that $I(\mu) \onto \pi$ for some $\mu \in \Hom(Z(\wt{T}_r), \C)$. Let $\Theta(\wt{\GL}_R, \chi)$ be the distinguished theta representation associated to an exceptional character $\chi\in \Hom(Z(\wt{T}_R), \C)$. Let $\mca{W}^{\GL_R}_{\mca{O}_0}$ be the unique Whittaker model of $\Theta(\wt{\GL}_R, \chi)$. Then for any Whittaker model $\mca{W}^{\GL_r}$ of $\pi$, the following equality holds:
\begin{equation} \label{F:BH00}
\msc{Z} (s, \mca{W}^{\GL_r} \times \overline{\mca{W}_{\mca{O}_0}^{\GL_R}} )= L(n_\alpha s -\frac{R-1}{2}, \mu_{n_\alpha} \times \overline{\nu_R}) \cdot \mca{W}^{\GL_r}(1) \cdot \overline{ \mca{W}^{\GL_R}_{\mca{O}_0} }(1).
\end{equation}
Here $\mu_{n_\alpha}$ is the linear character associated to $\mu$ as in (\ref{F:lin}), and $\chi_{n_\alpha}[\nu_R]$ the linear character to $\chi$ in Lemma \ref{L:exp-ch}.
\end{conj}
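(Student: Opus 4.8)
The plan is to prove (\ref{F:BH00}) by unfolding the Rankin--Selberg integral over the torus and reducing it to a Weyl-group combinatorial identity, with the explicit Whittaker formulas of \S3 as the essential local input; this runs in parallel with Suzuki's treatment \cite{Suz1} of the Kazhdan--Patterson case, and the notion of fundamental pair is tailored so that the argument goes through. Work with $\re(s)$ large enough for absolute convergence. Since the integrand of $\msc{Z}$ is right $K_r$-invariant, the Iwasawa decomposition $\GL_r = U_r\wt{T}_r K_r$ turns $\msc{Z}$ into a sum over cosets $\s_y\cdot\wt{A}_r$ in $\wt{T}_r/\wt{A}_r$; only dominant $\s_y$ contribute by Proposition \ref{P:Pat}. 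Into the summand substitute, via Proposition \ref{P:Wh-Th} for $\wt{\GL}_R$, the value $\overline{\mca{W}_{\mca{O}_0}^{\GL_R}}(\phi(\s_y)) = \delta_{B_R}^{1/2}(\phi(\s_y))\cdot\overline{\cc_{\mca{O}_0}^{\GL_R}}(\s_{\widehat{\phi(y)}})\cdot\overline{\mca{W}_{\mca{O}_0}^{\GL_R}}(1)$, which is nonzero only when $\widehat{\phi(y)}$ lies in the support of $\cc_{\mca{O}_0}^{\GL_R}$; the $\delta_B^{1/2}$-factors of the two Whittaker functions, the Iwasawa Jacobian, and $|\det(\varpi^y)|^{s-(R-r)/2}$ together collapse to the single factor $|\det(\varpi^y)|^s$.

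The second step is purely combinatorial, and Lemma \ref{L:02} is exactly what is needed: using that $n_\alpha = R$ is large relative to the entries of $\rho_{\GL_R}$, the support condition on $\widehat{\phi(y)}$ is equivalent to condition (i) of Lemma \ref{L:02}, hence to $y = -\w[0] + y_1$ for some $\w\in W_{\GL_r}$ and $y_1\in n_\alpha\cdot Y_{\GL_r}$, with $\widehat{\phi(y)} = \widehat{\phi(\w)}[0] + \widehat{\phi(y_1)}$ and $\widehat{y} = \widehat{\w}[0] + \widehat{y_1}$. By the multiplicative structure of $\cc_{\mca{O}_0}^{\GL_R}$ along minimal decompositions, Corollary \ref{C:key1}, and Lemma \ref{L:split} (which lets the $\widehat{y_1}$-parts be absorbed into $\wt{A}$-twists), the $\w[0]$-part contributes the factor $q^{-l(\w)}$ after pairing the two theta-type values, while the $y_1$-part, together with Lemma \ref{L:exp-ch} and the shift $s-\tfrac{R-r}{2}$, assembles into a geometric series in $q^{-n_\alpha s}$ attached to $\mu_{n_\alpha}$ and $\chi_{n_\alpha}[\nu_R]$ whose sum is $L(n_\alpha s - \tfrac{R-1}{2},\mu_{n_\alpha}\times\overline{\nu_R})$. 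Thus $\msc{Z}$ equals this Euler factor times $\overline{\mca{W}_{\mca{O}_0}^{\GL_R}}(1)$ times $\sum_{\w\in W_{\GL_r}} q^{-l(\w)}\cdot\big(\text{contribution of }\mca{W}^{\GL_r}\text{ at }\widehat{\w}[0]\big)$, and it remains to show this last sum equals $\mca{W}^{\GL_r}(1)$.

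When $\pi = \Theta(\wt{\GL}_r^{(n)},\mu)$ is itself a theta representation --- the case of Theorem \ref{T:theta} --- the contribution of $\mca{W}^{\GL_r}$ at $\widehat{\w}[0]$ is again explicit by Proposition \ref{P:Wh-Th}, being $\cc_{\mca{O}_0}^{\GL_r}(\s_{\widehat{\w}[0]})\cdot\mca{W}^{\GL_r}(1)$ of absolute value $q^{-l(\w)/2}$ by Proposition \ref{P:cc-val}, and the pairing $\overline{\cc_{\mca{O}_0}^{\GL_R}}(\s_{\widehat{\phi(\w)}[0]})\cdot\cc_{\mca{O}_0}^{\GL_r}(\s_{\widehat{\w}[0]}) = q^{-l(\w)}$ is precisely Corollary \ref{C:key1}; the reduction of $\sum_{\w\in W_{\GL_r}} q^{-l(\w)}(\cdots)$ to its $\w = \id$ term is then a Weyl-group identity carried out as in \cite{Suz1}, yielding (\ref{F:BH00}). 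For a general generic unramified $\pi$ one instead expands $\mca{W}^{\GL_r}$ through Proposition \ref{P:Pat} into Gindikin--Karpelevich coefficients and entries of the local coefficient matrix for $\mu$, split into $\tau^1$ and $\tau^2$ parts as in Theorem \ref{T:SLCM}; one must then show the off-diagonal $\tau^2$-contributions cancel across $W_{\GL_r}$ by the vanishing identities \cite[Corollary 3.5]{Ga2} --- the mechanism already used in the proof of Proposition \ref{P:Wh-Th} --- so that only the diagonal $\w = \id$ piece, proportional to $\mca{W}^{\GL_r}(1)$, survives. That $\mca{W}^{\GL_r}(1)$ and no finer invariant occurs is forced by these identities, independently of which (possibly non-unique) Whittaker model $\mca{W}^{\GL_r}$ was chosen.

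The main obstacle is exactly this final step for a general $\pi$. In the theta case the Whittaker values are Gauss sums with the multiplicativity of Proposition \ref{P:cc-val} and the clean pairing of Corollary \ref{C:key1}, which keeps the Weyl-group bookkeeping under control; for $r = 2$ the Weyl group is small enough that the $\tau^2$-cancellation can be verified directly, giving Theorem \ref{T:rank2}. For a general $\pi$ the local coefficient matrix of $\mu$ carries no such structure and the cancellation is opaque, so one would rather recast (\ref{F:BH00}) as an integral representation of Whittaker values in the spirit of Bump--Friedberg \cite{BF} and adapt Ginzburg's argument \cite{Gin}; the last section of the paper indicates, without full details, how this should go for a fundamental pair.
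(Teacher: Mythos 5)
You should first note that the statement you are proving is Conjecture \ref{C:GBH}: the paper itself offers no proof of it in general, only the two special cases recorded as Theorem \ref{T:theta} (both representations theta) and Theorem \ref{T:rank2} ($r=2$), with the general case deferred to the generating-function approach of \cite{BF} and \cite{Gin}. Your proposal honestly does the same, and its overall framework for the theta case is indeed the paper's: unfold over the torus via Iwasawa and Proposition \ref{P:Pat}, substitute the theta Whittaker values from Proposition \ref{P:Wh-Th}, reduce to the orbit $\mca{O}_0$ (Lemma \ref{L:O-0}), and restrict the support to $\w\in W_{\GL_r}$ (Lemma \ref{L:crucial}, Lemma \ref{L:02}). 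Up to that point your sketch matches the paper.

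However, the steps that actually close the computation are misstated, and this is a genuine gap rather than a stylistic difference. The coset $Y_{\GL_r,\w}$ has base point $y_\w=-\w[0]+z_\w$ with $z_\w\neq 0$ whenever $\w\neq\id$, so after pairing the two $\cc$-values the $\w$-term is $q^{-\tau(\w)}$ with $\tau(\w)=l(\w)+\sum_{k\in I_\w}k(\cdots)$ (Lemma \ref{L:key2}, Corollary \ref{C:key-mul}), not $q^{-l(\w)}$; the lattice sum over $Y_{\GL_r}^\dag$ equals $\prod_{j}\zeta(j\cdot\tau(\sigma_j))$ (Proposition \ref{P:sum-dag}) and is \emph{not} the $L$-function; and the Weyl sum equals $\prod_{j\ge 2}\zeta(\tau(\sigma_j))/\zeta(j\cdot\tau(\sigma_j))$ (Proposition \ref{P:sumW}, via Suzuki's additivity Lemma \ref{L:suz}) --- it neither collapses to its $\w=\id$ term nor equals $\mca{W}^{\GL_r}(1)$, which has already been factored out by Proposition \ref{P:Wh-Th}. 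The $L$-factor $L(n_\alpha s-\tfrac{R-1}{2},\mu_{n_\alpha}\times\overline{\nu_R})$ only emerges from multiplying these two products; your factorization assigns it to the geometric series alone and then asks the remaining Weyl sum to be $\mca{W}^{\GL_r}(1)$, which is false. Likewise, your proposed mechanism for general $\pi$ --- cancellation of the $\tau^2$-contributions across $W_{\GL_r}$ by \cite[Corollary 3.5]{Ga2}, leaving only a diagonal piece proportional to $\mca{W}^{\GL_r}(1)$ --- does not work: those vanishing identities hold only for the special functions $\cc_{\mca{O}_y}$ attached to exceptional characters (that is precisely how Proposition \ref{P:Wh-Th} is proved), not for an arbitrary $\cc\in\Ftn(i(\mu))$. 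The paper's own rank-two computation shows no such cancellation: for each $\gamma$ both the $Y_{\GL_2,\id}$- and $Y_{\GL_2,\w_1}$-sums contribute, $\mca{W}^{\GL_2}_{\s_{\w_1[0]}}(1)$ is itself a Gauss sum, and the $L$-factor is obtained by explicitly combining the two geometric series using Theorem \ref{T:SLCM}. So while your plan points in the right direction, the bookkeeping that would turn it into the paper's proofs of Theorems \ref{T:theta} and \ref{T:rank2} is missing, and for general $\pi$ the conjecture remains open exactly as the paper states.
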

\vskip 5pt

\begin{rmk}
A fundamental pair always satisfies $n|(2\p \cdot n_\alpha)$. The stronger assumption $n|(\p\cdot n_\alpha)$ is for technical reasons of the well-definedness of $\mu_{n_\alpha}$ as in (\ref{F:lin}). We believe that for general fundamental pair, an analogous Bump-Hoffstein conjecture still holds, though with the right hand side of (\ref{F:BH00}) presumably much involved. 

For a fundamental pair of Kazhdan-Patterson covering groups, the condition $n|(\p\cdot n_\alpha)$ is automatically satisfied since $n_\alpha=n$ in this case. The Bump-Hoffstein conjecture above is proven for theta representations in \cite{Suz1}. It is proven for arbitrary generic unramified representation $\pi$ of $\wt{\GL}_r$ by \cite{BF} and \cite{Gin}. Also, the $L$-function that appears in Conjecture \ref{C:GBH} is conjecturally equal to $L(n_\alpha s -\frac{R-1}{2}, \text{Sh}(\pi) \times \overline{\nu_R})$ where Sh($\pi$) is the sought Shimura lifting of $\pi$. 

For a fundamental pair of special Savin coverings satisfying $n|(\p\cdot n_\alpha)$, which is equivalent to the oddness of $n$ in this case, we have that $L(s, \mu_{n_\alpha} \times \overline{\nu_R})$ is nothing but $L(s, \pi \times \overline{\nu_R})$.
\end{rmk}

In the following section, we will show that Conjecture \ref{C:GBH} holds when $\pi$ is a generic theta representation of $\wt{\GL}_r$.
\vskip 10pt

%%%
\section{The case of theta representations}
We retain the notations in the previous section. Let $(\wt{\GL}_r^{(n)}, \wt{\GL}_R^{(n)})$ be a fundamental pair satisfying $n|(\p\cdot n_\alpha)$. In this section, $\pi=\Theta(\wt{\GL}_r, \mu)$ is a generic theta representation, where $\mu$ is an exceptional character for $\wt{\GL}_r$. We have
\begin{equation} \label{F:mu}
\mu_{n_\alpha}[\nu_r]=\bigotimes_{i=1}^r |\cdot |^{\frac{r+1}{2} + \nu_r -i}
\end{equation}
for some $\nu_r\in \C$. We also have the distinguished theta representation $\Theta(\wt{\GL}_R, \chi)$ with
\begin{equation} \label{F:chi}
\chi_{n_\alpha}[\nu_R]=\bigotimes_{i=1}^R |\cdot |^{\frac{R+1}{2} + \nu_R -i},
\end{equation}
where $\nu_R \in \C$.
\vskip 5pt

First, note that
\begin{align} \label{F:BH}
& \msc{Z}(s, \overline{\mca{W}_{\mca{O}_0}^{\GL_R}} \times \mca{W}_\cc^{\GL_r}) \\
=&\int_{U_r \backslash \GL_r} \overline{\mca{W}_{\mca{O}_0}^{\GL_R}}( \phi(\wt{g})) \cdot \mca{W}_\cc^{\GL_r}(\wt{g}) \cdot  |\det(g)|^{s-\frac{R-r}{2}} dg \nonumber \\
=&\int_{T_r} \overline{\mca{W}_{\mca{O}_0}^{\GL_R}}( \phi(\wt{t})) \cdot \mca{W}_\cc^{\GL_r}(\wt{t}) \cdot  |\det(t)|^s \cdot \delta^{1/2}_{B_r}(\wt{t})^{-1} \cdot \delta^{1/2}_{B_R}(\phi(\wt{t}))^{-1}  dt  \nonumber \\
=&\sum_{y\in Y_{\GL_r}} \overline{\mca{W}_{\mca{O}_0}^{\GL_R}}(\s_{\phi(y)}) \cdot \mca{W}_\cc^{\GL_r}(\s_y) \cdot  |\det(\s_y)|^s \cdot \delta^{1/2}_{B_r}(\s_y)^{-1} \cdot \delta^{1/2}_{B_R}(\s_{\phi(y)})^{-1}. \nonumber
\end{align}
As discussed before Proposition \ref{P:cc-val}, any Whittaker model $\mca{W}_\cc^{\GL_r}$ of $\Theta(\wt{\GL}_r, \mu)$  arises from an element
$$\cc=\sum_{i=1}^{ \val{\wp(\OF_{Q,n})} } a_i \cdot \cc^{\GL_r}_{\mca{O}_{y_i}} \in \Ftn(i(\mu)),$$
where $a_i \in \C$ and the $\cc_{\mca{O}_{y_i}}^{\GL_r}$'s are given in (\ref{F:cc}). Here $\set{\mca{O}_{y_i} } \subseteq \OF_{Q,n}$ is a set of representatives for $\wp(\OF_{Q,n})$. It follows that
$$\msc{Z}(s, \overline{\mca{W}_{\mca{O}_0}^{\GL_R}} \times \mca{W}_\cc^{\GL_r}) =
\sum_{i=1}^{ \val{\wp(\OF_{Q,n})} } a_i \cdot \msc{Z}(s, \overline{\mca{W}_{\mca{O}_0}^{\GL_R}} \times \mca{W}_{\mca{O}_{y_i}}^{\GL_r}).$$

\begin{lm} \label{L:O-0}
If $\wp_{\GL_r}(\mca{O}_{y_i}) \neq \wp_{\GL_r}(\mca{O}_0)$, then $\mca{W}^{\GL_r}_{\mca{O}_{y_i}}(1)=0$. Moreover in this case,
$$\msc{Z}(s, \overline{\mca{W}_{\mca{O}_0}^{\GL_R}} \times \mca{W}_{\mca{O}_{y_i}}^{\GL_r})=0.$$
\end{lm}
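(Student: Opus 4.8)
The plan is to deduce both statements from a single combinatorial fact: for two $Y_{\GL_r,Q,n}$-free twisted Weyl orbits, the images $\wp_{\GL_r}(\mca{O})$ and $\wp_{\GL_r}(\mca{O}')$ under the quotient map $\wp_{\GL_r}$ of (\ref{F:p-map}) are either equal or disjoint. I would record this first: since $Y_{\GL_r,Q,n}$ is Weyl-invariant and the twisted action satisfies $\w[a]-\w[b]=\w(a-b)$, any congruence $z\equiv z'\bmod Y_{\GL_r,Q,n}$ with $z\in\mca{O}$ and $z'\in\mca{O}'$ propagates to $\w[z]\equiv\w[z']$ for all $\w\in W_{\GL_r}$, forcing $\wp_{\GL_r}(\mca{O})=\wp_{\GL_r}(\mca{O}')$. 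In particular, the hypothesis $\wp_{\GL_r}(\mca{O}_{y_i})\ne\wp_{\GL_r}(\mca{O}_0)$ means these two images are disjoint, and since $\wp_{\GL_r}(0)\in\wp_{\GL_r}(\mca{O}_0)$ we obtain $\wp_{\GL_r}(0)\notin\wp_{\GL_r}(\mca{O}_{y_i})$.

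For the first assertion, evaluate Proposition \ref{P:Wh-Th} at $\wt{t}=1_{\wt{T}}=\s_0$ to obtain $\mca{W}^{\GL_r}_{\mca{O}_{y_i}}(1)=\gk(\w_{0,r},\mu)\cdot\cc^{\GL_r}_{\mca{O}_{y_i}}(\s_0)$. By construction $\cc^{\GL_r}_{\mca{O}_{y_i}}$ is supported on $\bigcup_{\w\in W_{\GL_r}}\s_{\w[y_i]}\cdot\wt{A}$, so $\cc^{\GL_r}_{\mca{O}_{y_i}}(\s_0)\ne 0$ would force $\wp_{\GL_r}(0)\in\wp_{\GL_r}(\mca{O}_{y_i})$, which we have excluded. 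Hence $\mca{W}^{\GL_r}_{\mca{O}_{y_i}}(1)=0$.

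For the integral, I would argue term by term in the last expression of (\ref{F:BH}) with $\cc=\cc^{\GL_r}_{\mca{O}_{y_i}}$; since $|\det(\s_y)|^s$ and the modular characters never vanish, it suffices to show that for every $y\in Y_{\GL_r}$ at least one of $\mca{W}^{\GL_r}_{\mca{O}_{y_i}}(\s_y)$ and $\overline{\mca{W}^{\GL_R}_{\mca{O}_0}}(\s_{\phi(y)})$ is zero. Suppose both are nonzero. By Propositions \ref{P:Pat} and \ref{P:Wh-Th} applied to $\wt{\GL}_R$ — using that for a fundamental pair $Y_{\GL_R,Q,n}=n_\alpha Y_{\GL_R}$ and $\cc^{\GL_R}_{\mca{O}_0}$ is nonzero on every coset $\s_{\w'[0]}\wt{A}$, $\w'\in W_{\GL_R}$ — nonvanishing of the second factor forces $\widehat{\phi(y)}\equiv\w'[0]\bmod n_\alpha Y_{\GL_R}$ for some $\w'\in W_{\GL_R}$. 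Writing $y=\sum_{i=1}^r k_i e_i$, the vector $\widehat{\phi(y)}$ vanishes in the first $R-r$ coordinates, whereas the $j$-th coordinate of $\w'[0]$ equals $\sigma^{-1}(j)-j$ for the underlying permutation $\sigma$, of absolute value at most $R-1<R=n_\alpha$; hence $\sigma$ fixes $1,\dots,R-r$, i.e. $\w'=\widehat{\phi(\w)}$ for some $\w\in W_{\GL_r}$. This is precisely condition (i) of Lemma \ref{L:02}, so condition (iii) of that lemma gives $\widehat{y}=\widehat{\w}[0]+y_2$ with $y_2\in n_\alpha Y_{\GL_r}\subseteq Y_{\GL_r,Q,n}$ by (\ref{F:nY}); thus $\wp_{\GL_r}(\widehat{y})=\wp_{\GL_r}(\widehat{\w}[0])\in\wp_{\GL_r}(\mca{O}_0)$. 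On the other hand, by Proposition \ref{P:Wh-Th} and Lemma \ref{L:2n}, nonvanishing of the first factor forces $\cc^{\GL_r}_{\mca{O}_{y_i}}(\s_{\widehat{y}})\ne 0$, hence $\wp_{\GL_r}(\widehat{y})\in\wp_{\GL_r}(\mca{O}_{y_i})$. The two images then meet, so they are equal, contradicting the hypothesis; therefore every term vanishes and $\msc{Z}(s,\overline{\mca{W}^{\GL_R}_{\mca{O}_0}}\times\mca{W}^{\GL_r}_{\mca{O}_{y_i}})=0$.

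The main obstacle is the middle step of the integral argument: converting the support condition for the Whittaker function of $\Theta(\wt{\GL}_R,\chi)$, which a priori is a congruence modulo $n_\alpha Y_{\GL_R}$ involving the large Weyl group $W_{\GL_R}$, into a congruence for $\widehat{y}$ involving only $W_{\GL_r}$ that can be compared with the support of $\cc^{\GL_r}_{\mca{O}_{y_i}}$. This is exactly where the fundamental-pair identity $n_\alpha=R$ is used (to bound $|\sigma^{-1}(j)-j|<n_\alpha$) together with the equivalences in Lemma \ref{L:02}; the remaining ingredients — the reductions via Propositions \ref{P:Pat}, \ref{P:Wh-Th} and Lemma \ref{L:2n}, and the equal-or-disjoint dichotomy for orbit images — are routine.
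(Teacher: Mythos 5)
Your proposal is correct and follows essentially the same route as the paper: vanishing at $1$ via the support of $\cc^{\GL_r}_{\mca{O}_{y_i}}$, and disjointness of the supports of the two Whittaker functions on $\s_y$, $y\in Y_{\GL_r}$, obtained by comparing the congruence conditions modulo $Y_{\GL_r,Q,n}$ and $n_\alpha Y_{\GL_R}$ and using $n_\alpha=R$ together with $Y_{\GL_r}\cap n_\alpha Y_{\GL_R}=n_\alpha Y_{\GL_r}\subseteq Y_{\GL_r,Q,n}$. Your explicit coordinate argument forcing $\w'=\widehat{\phi(\w)}$ with $\w\in W_{\GL_r}$, and the equal-or-disjoint dichotomy for images of free orbits, are exactly the details the paper leaves as ``easy to check,'' so this is the same proof with the gaps filled in.
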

\begin{proof}
Assume $\wp_{\GL_r}(\mca{O}_{y_i}) \neq \wp_{\GL_r}(\mca{O}_0)$, it is easy to see that $\mca{W}_{\mca{O}_{y_i}}^{\GL_r}(1)=0$. For the second assertion, we will show that the supports of $\mca{W}_{\mca{O}_0}^{\GL_R}$ and $\mca{W}_{\mca{O}_{y_i}}^{\GL_r}$ are disjoint, and this will give the desired result. It suffices to consider $\wt{t}:=\s(\varpi^y) \in \wt{T}_r$ with $y\in Y_{\GL_r}$. Note that by Proposition \ref{P:Wh-Th}, $\mca{W}_{\mca{O}_{y_i}}^{\GL_r}(\wt{t})=0$ unless $\wt{t}$ is dominant and 
$$\w_{0,r}(y) \in \mca{O}_{y_i} + Y_{\GL_r, Q, n};$$ 
that is, there exists $\w\in W_{\GL_r}$ such that 
$$y \in \widehat{\w}(\widehat{y_i} + \rho_{\GL_r}) -\rho_{\GL_r} + Y_{\GL_r, Q,n}.$$
Now, suppose that $\phi(\wt{t})$ lies in the support of $\mca{W}_{\mca{O}_0}^{\GL_R}$, then $\w_{0,R}(\phi(y)) \in \mca{O}_0 + Y_{\GL_R, Q, n}$. That is, there exists some $\w'\in W_{\GL_R}$ such that
$$\phi(y) \in \widehat{\w'}(\rho_{\GL_R}) - \rho_{\GL_R} + Y_{\GL_R, Q,n},$$
where $Y_{\GL_R, Q, n}=n_\alpha \cdot Y_{\GL_R}$. Since $y\in Y_{\GL_r}$, it is easy to check that we have $\widehat{\w'} \in W_{\GL_r}$ and therefore
$$\phi(y) - (\widehat{\w'}(\rho_{\GL_R}) -\rho_{\GL_R})=\phi(y) - (\widehat{\w'}(\rho_{\GL_r}) -\rho_{\GL_r}) \in Y_{\GL_r} \cap Y_{\GL_R, Q,n}$$
with $Y_{\GL_r}\cap Y_{\GL_R,Q,n}=n_\alpha\cdot Y_{\GL_r} \subset Y_{\GL_r, Q,n}$ in this case. Hence, we have
$$\widehat{\w}(\widehat{y_i} + \rho_{\GL_r} ) -\rho_{\GL_r} \equiv \widehat{\w'}(\rho_{\GL_r}) -\rho_{\GL_r} \mod Y_{\GL_r,Q,n},$$
and this implies that
$$y_i \in \w''[0] + Y_{\GL_r,Q,n}$$
for some $\w'' \in W_{\GL_r}$, i.e., $\wp_{\GL_r}(\mca{O}_{y_i})=\wp_{\GL_r}(\mca{O}_0)$. This is a contradiction. Therefore we have shown that the supports of $\mca{W}_{\mca{O}_0}^{\GL_R}$ and $\mca{W}_{\mca{O}_{y_i}}^{\GL_r}$ are disjoint. The proof is completed.
\end{proof}

Immediately it follows from Lemma \ref{L:O-0} that for the Bump-Hoffstein conjecture it suffices to consider $\msc{Z}(s, \overline{\mca{W}_{\mca{O}_0}^{\GL_R}} \times \mca{W}_{\mca{O}_0}^{\GL_r})$.

For any $\w\in W_{\GL_R}$, define the set
\begin{equation} \label{dWh}
Y_{\GL_r, \w}:=
\left\{
\begin{array}{cc}
y=\sum_i y_i e_i \in Y_{\GL_r}: \\
\bullet \ \angb{\phi(y)}{\alpha}\ge 0 \text{ for all } \alpha\in \Delta_{\GL_R}, \\
\bullet \ \widehat{\phi(y)} \in \widehat{\w}[0] + Y_{\GL_R,Q,n}
\end{array}\right\}.
\end{equation}
The condition that $\angb{\phi(y)}{\alpha}\ge 0$ for all $\alpha\in \Delta_{\GL_R}$ is equivalent to 
$$y_1\ge y_2 \ge ... \ge y_r \ge 0.$$
Clearly, $\mca{W}_{\mca{O}_0}^{\GL_R}(\s_{\phi(y)})=0$ unless $y\in Y_{\GL_r,\w}$ for some $\w \in W_{\GL_R}$.
\vskip 10pt

The following lemma plays a crucial rule in the paper.
\begin{lm} \label{L:crucial}
With notations above, one has $Y_{\GL_r, \w}=\emptyset$ unless $\w\in W_{\GL_r}$. 
\end{lm}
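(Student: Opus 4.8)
The plan is to unwind the two defining conditions of $Y_{\GL_r,\w}$ and show that if this set is non-empty then $\w$ must already stabilize the sublattice $Y_{\GL_r}\subseteq Y_{\GL_R}$, forcing $\w\in W_{\GL_r}$. Suppose $y=\sum_{i=1}^r y_i e_i\in Y_{\GL_r,\w}$ is non-empty, so in particular $y_1\ge y_2\ge\dots\ge y_r\ge 0$ by the dominance condition, and $\widehat{\phi(y)}\in\widehat{\w}[0]+Y_{\GL_R,Q,n}$ with $Y_{\GL_R,Q,n}=n_\alpha\cdot Y_{\GL_R}$ by (\textsf{FP2}). First I would rewrite the membership condition: $\widehat{\phi(y)}=\w_{0,R}(\phi(y))$ and $\widehat{\w}[0]=\widehat{\w}(\rho_{\GL_R}^{})-\rho_{\GL_R}^{}$, where $\widehat{\w}=\w_{0,R}\,\w\,\w_{0,R}^{-1}\in W_{\GL_R}$; applying $\w_{0,R}^{-1}$ and using that $\rho_{\GL_R}-\w_{0,R}(\rho_{\GL_R})$-type corrections are themselves in $Y_{\GL_R}$, the condition becomes $\phi(y)\in \w(\rho_{\GL_R}^{})-\rho_{\GL_R}^{}+n_\alpha\cdot Y_{\GL_R}$, i.e. $\phi(y)\equiv \w[0]\pmod{n_\alpha Y_{\GL_R}}$ (compare the manipulation in the proof of Lemma~\ref{L:02}).

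The heart of the argument is then a coordinate computation in $Y_{\GL_R}=\bigoplus_{j=1}^R\Z e_j$. Write $\w[0]=\w(\rho_{\GL_R}^{})-\rho_{\GL_R}^{}$; since $\rho_{\GL_R}^{}=\tfrac12\sum_{j}(R+1-2j)e_j$ (up to the standard shift), the $j$-th coordinate of $\w(\rho_{\GL_R})$ is $\tfrac{R+1-2\w^{-1}(j)}{2}$ in the obvious identification of $W_{\GL_R}$ with $\mathfrak{S}_R$, so the $j$-th coordinate of $\w[0]$ is $\w^{-1}(j)^{-1}$... more precisely an integer of the form $j-\w^{-1}(j)$ lying in the interval $[-(R-1),R-1]$. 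Now I would look at the coordinates $j=r+1,\dots,R$ of the congruence $\phi(y)\equiv\w[0]\pmod{n_\alpha Y_{\GL_R}}$: the left side has coordinate $0$ there because $\phi(y)=\sum_{i=1}^r y_i e_i$, so $j-\w^{-1}(j)\equiv 0\pmod{n_\alpha}$ for each such $j$, and since $|j-\w^{-1}(j)|\le R-1<n_\alpha$ by condition (\textsf{FP2}) (namely $n_\alpha=R$), we must have $\w^{-1}(j)=j$ for all $j>r$. Hence $\w$ fixes $\{r+1,\dots,R\}$ pointwise, so $\w$ permutes $\{1,\dots,r\}$, i.e. $\w\in W_{\GL_r}\subseteq W_{\GL_R}$, which is exactly the claim; the contrapositive gives $Y_{\GL_r,\w}=\emptyset$ when $\w\notin W_{\GL_r}$.

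I expect the main obstacle to be bookkeeping rather than conceptual: keeping straight the twisted action $\w[\cdot]$ versus the linear action, correctly tracking which $\rho$-shift lives in which lattice under $\phi$, and confirming that the relevant coordinate differences of $\w[0]$ really are bounded in absolute value by $R-1=n_\alpha-1$ so that the congruence mod $n_\alpha$ pins them to zero. One should also double-check that only the condition $\widehat{\phi(y)}\in\widehat{\w}[0]+Y_{\GL_R,Q,n}$ is needed for the argument (the dominance inequalities $y_1\ge\dots\ge y_r\ge 0$ are not actually required to conclude $\w\in W_{\GL_r}$, though they are harmless and match the way $Y_{\GL_r,\w}$ is used later). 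Once the coordinate bound is in place, the conclusion is immediate, so no further estimates are needed.
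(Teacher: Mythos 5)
Your proof is correct and follows essentially the same route as the paper: both reduce the condition $\widehat{\phi(y)}\in\widehat{\w}[0]+Y_{\GL_R,Q,n}=\widehat{\w}[0]+n_\alpha Y_{\GL_R}$ to a congruence on the coordinates of $\w[0]$ in positions $j>r$, which are integers of absolute value at most $R-1<n_\alpha=R$, so the congruence forces $\w^{-1}(j)=j$ for all $j>r$ and hence $\w\in W_{\GL_r}$ (the paper phrases this via the function $f$ and a contrapositive, you via the explicit coordinate formula). The only blemish is a sign-convention slip: with the paper's definition $\w[0]=\rho_{\GL_R}-\w(\rho_{\GL_R})$ the condition reads $\phi(y)\equiv-\w[0]$ and the $j$-th coordinate is $\w^{-1}(j)-j$ rather than $j-\w^{-1}(j)$, but since you only use the bound $\val{j-\w^{-1}(j)}\le R-1$ together with divisibility by $n_\alpha$, the conclusion is unaffected.
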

\begin{proof}
The result relies on the crucial fact that $Y_{\GL_R, Q,n}=n_\alpha\cdot Y_{\GL_R}$ and $n_\alpha=R>r$ for a fundamental pair $(\wt{\GL}_r^{(n)}, \wt{\GL}_R^{(n)})$. We outline the argument.

First, for $y\in Y_{\GL_r}$ and $\w \in W_{\GL_R}$, the assertion that $\widehat{\phi(y)} \in \widehat{\w}[0] + Y_{\GL_R,Q,n}$ is equivalent to $y \in \w(\rho_{\GL_R}) -\rho_{\GL_R} + Y_{\GL_R, Q, n}$, which is further equivalent to
$$-y \in \w[0] + R\cdot Y_{\GL_R}.$$

For our purpose, define a function $f: Y_{\GL_R} \to \N_{\ge 0}$ by
$$f\left( \sum_{i=1}^R y_i e_i \right):=\text{max} \set{\val{y_i}: r < i \le R }.$$
For any $\w\in W_{\GL_R}$, write $\w[0]=\sum_{i=1}^R y_i e_i$; it is easy to see that $\text{max} \set{\val{y_i}: 1\le i\le R} \le R-1$ in this case. Therefore, $f(\w[0]) \in [0, R-1]$ for any $\w\in W_{\GL_R}$.

To show the Lemma, it suffices to show that if $f(\w[0])=0$, then $\w \in W_{\GL_r}$. However, if $\w \notin W_{\GL_r}$, then there must exist $y_i$ with $i>r$ of $\w[0]=\sum_i y_i e_i$ such that $y_i < 0$. That is, $f(\w[0]) \ge 1$. This completes the proof.
\end{proof}

It follows immediately that
\begin{align*}
& \msc{Z}(s, \overline{\mca{W}_{\mca{O}_0}^{\GL_R}} \times \mca{W}_{\mca{O}_0}^{\GL_r})\\
=&\sum_{\w \in W_{\GL_r}} \sum_{y\in Y_{\GL_r,\w}} \overline{\mca{W}_{\mca{O}_0}^{\GL_R}}(\s_{\phi(y)}) \cdot \mca{W}_{\mca{O}_0}^{\GL_r}(\s_y) \cdot |\det(\s_y)|^s \cdot \delta_{B_r}^{-1/2}(\s_y) \cdot \delta_{B_R}^{-1/2}(\s_{\phi(y)})
\end{align*}

To proceed, we define
$$Y_{\GL_r}^\dag=\set{y\in n_\alpha \cdot Y_{\GL_r}: \angb{\phi(y)}{\alpha}\ge 0 \text{ for all } \alpha \in \Delta_{\GL_R}}.$$
The proof of the following lemma is elementary (see also \cite[\S 4]{Suz1}), and we omit the details.
\begin{lm} 
For every $\w\in W_{\GL_r}$, there exists a unique $y_\w \in Y_{\GL_r,\w}$ such that 
$$Y_{\GL_r,\w}= y_\w + Y_{\GL_r}^\dag.$$
\end{lm}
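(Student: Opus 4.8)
The plan is to give an explicit description of $Y_{\GL_r,\w}$ for each $\w\in W_{\GL_r}$ and read off both the existence and uniqueness of the base point $y_\w$ from the structure of the lattice $Y_{\GL_R,Q,n}=n_\alpha\cdot Y_{\GL_R}$. Recall from the proof of Lemma \ref{L:crucial} that, writing $\phi(y)=\sum_{i=1}^r y_i e_i$, the membership condition $\widehat{\phi(y)}\in\widehat{\w}[0]+Y_{\GL_R,Q,n}$ is equivalent to $-y\in\w[0]+R\cdot Y_{\GL_R}$, i.e. $y_i\equiv -(\w[0])_i \pmod{n_\alpha}$ for each $1\le i\le r$ (the coordinates of $\w[0]$ with index $>r$ are automatically $0$ since $\w\in W_{\GL_r}$). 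So $Y_{\GL_r,\w}$ is precisely the set of $y\in Y_{\GL_r}$ lying in a fixed coset of $n_\alpha\cdot Y_{\GL_r}$ and satisfying the dominance condition $y_1\ge y_2\ge\dots\ge y_r\ge 0$.

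First I would record that $Y_{\GL_r}^\dag$ is exactly the set of $z\in n_\alpha\cdot Y_{\GL_r}$ with $z_1\ge z_2\ge\dots\ge z_r\ge 0$, and that this is a (commutative, cancellative) monoid under addition: indeed if $z,z'$ both have weakly decreasing nonnegative coordinates divisible by $n_\alpha$, so does $z+z'$. Next, given $\w\in W_{\GL_r}$, let $c_i\in\{0,1,\dots,n_\alpha-1\}$ be the unique representative of $-(\w[0])_i \bmod n_\alpha$. Because $\w\in W_{\GL_r}$ and $r\le n_\alpha$, one checks directly that the string $(c_1,\dots,c_r)$ can be rearranged by a permutation $\sigma\in S_r$ (and only by such a permutation, up to equality of entries) into weakly decreasing order; more precisely I would show that there is a unique tuple $(b_1,\dots,b_r)$ with $b_1\ge b_2\ge\dots\ge b_r\ge 0$, each $b_i<n_\alpha$, and $b_i\equiv c_{\pi(i)}$ for a suitable bijection — but the cleanest route is to define $y_\w$ coordinatewise as the minimal solution: set $(y_\w)_i$ to be the smallest nonnegative integer that is $\equiv -(\w[0])_i \pmod{n_\alpha}$ and makes the whole tuple weakly decreasing, proceeding from $i=1$ to $i=r$ does not quite work because of the ordering, so instead I would argue combinatorially.

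Concretely, the existence argument: among all $y\in Y_{\GL_r,\w}$ (nonempty by Lemma \ref{L:crucial}'s converse, or by exhibiting one element directly), pick one minimizing $\sum_i y_i$; call it $y_\w$. If $y\in Y_{\GL_r,\w}$ is arbitrary, then $y-y_\w\in n_\alpha\cdot Y_{\GL_r}$, and I claim $y-y_\w$ has weakly decreasing nonnegative coordinates, i.e. lies in $Y_{\GL_r}^\dag$: if some coordinate $(y-y_\w)_j<0$ then $y_j<(y_\w)_j$, and subtracting $n_\alpha e_j$ appropriately from $y_\w$ — after checking this preserves the dominance and congruence conditions, which it does precisely because $y_j$ is itself a valid value — produces an element of $Y_{\GL_r,\w}$ with smaller coordinate sum, contradicting minimality; the weakly-decreasing property of $y-y_\w$ follows similarly by a swap argument using that both $y$ and $y_\w$ are dominant and congruent mod $n_\alpha$. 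Conversely $y_\w+Y_{\GL_r}^\dag\subseteq Y_{\GL_r,\w}$ is immediate since adding an element of $Y_{\GL_r}^\dag$ preserves both the congruence (it is in $n_\alpha Y_{\GL_r}$) and the dominance (sum of two dominant tuples is dominant). Uniqueness of $y_\w$ is then automatic: any two candidates differ by an element of $Y_{\GL_r}^\dag$ with nonnegative coordinate sum in both directions, forcing the difference to be $0$.

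The main obstacle is the swap/minimality argument showing that $y-y_\w\in Y_{\GL_r}^\dag$ rather than merely $y-y_\w\in n_\alpha Y_{\GL_r}$ — one must rule out the possibility that two dominant tuples, congruent to each other modulo $n_\alpha$ coordinatewise, differ by a tuple with a negative or non-monotone coordinate. This is where the hypothesis $r\le n_\alpha=R$ (so that the residues $c_i$ together with the forced ordering pin down a \emph{unique} dominant representative in each coset) is used crucially; without it the residue pattern could be realized by genuinely different dominant tuples of bounded size and the base point would not be unique. Since the paper explicitly says the proof is elementary and refers to \cite[\S 4]{Suz1}, I would present the monoid structure of $Y_{\GL_r}^\dag$, the coset description of $Y_{\GL_r,\w}$, and the minimality characterization of $y_\w$, and relegate the coordinate bookkeeping of the swap argument to a remark that it proceeds exactly as in \emph{loc. cit.}
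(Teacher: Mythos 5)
Your route differs from the paper only in that the paper omits the proof and instead simply records the explicit base point $y_\w=-\w[0]+z_\w$ with $z_\w=\sum_{k\in I_\w}n_\alpha(e_1+\cdots+e_k)$; your plan (coset description of $Y_{\GL_r,\w}$ modulo $n_\alpha\cdot Y_{\GL_r}$, the monoid $Y_{\GL_r}^\dag$, and a minimal-sum base point) is a legitimate elementary alternative. However, the one step that carries the whole content of the lemma --- that $y-y_\w$ is again weakly decreasing with nonnegative entries --- is precisely where your sketch does not work as written. Subtracting $n_\alpha e_j$ from $y_\w$, or replacing $(y_\w)_j$ by the ``valid value'' $y_j$, need not preserve dominance: with $n_\alpha=3$ and residues $(2,1)$, take $y_\w=(5,4)$ and $y=(2,1)$; then $y_1<(y_\w)_1$, but both proposed modifications produce $(2,4)$, which is not dominant, so ``because $y_j$ is itself a valid value'' is not a justification. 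The moves that do work are either the coordinatewise minimum $\min(y,y_\w)$ (which stays in $Y_{\GL_r,\w}$ and handles negativity of $y-y_\w$), or, when monotonicity of $y-y_\w$ fails at $j$, subtracting $n_\alpha(e_1+\cdots+e_j)$ from $y_\w$: the coordinatewise congruence forces $(y_\w)_j-(y_\w)_{j+1}\ge (y_j-y_{j+1})+n_\alpha\ge n_\alpha$, so the subtraction stays in $Y_{\GL_r,\w}$ and strictly lowers the coordinate sum. Simplest of all, you can bypass minimality entirely: define $y_\w$ greedily from the bottom, $(y_\w)_r\in[0,n_\alpha)$ the least nonnegative residue and $(y_\w)_j$ the least integer $\ge (y_\w)_{j+1}$ in its residue class, so that $(y_\w)_r$ and all gaps $(y_\w)_j-(y_\w)_{j+1}$ lie in $[0,n_\alpha)$; for any $y\in Y_{\GL_r,\w}$ the corresponding quantities are $\ge 0$ and congruent modulo $n_\alpha$, hence at least as large, which gives $y-y_\w\in Y_{\GL_r}^\dag$ in one line. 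Uniqueness then follows exactly as you say, since every element of $Y_{\GL_r}^\dag$ has nonnegative coordinates.

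Two smaller corrections. First, the abandoned musing about rearranging the residues $(c_1,\dots,c_r)$ by a permutation is genuinely wrong (the congruence condition is coordinatewise, so permuting residues changes the coset; cf.\ $\w=\w_{0,r}$ in $\GL_3$, where the residues are $(2,0,2)$ mod $4$ but $y_\w=(6,4,2)$), so it is good that your final argument does not use it. Second, your claim that $r\le n_\alpha$ is what makes the base point unique is off: existence and uniqueness of the minimal dominant representative of a coordinatewise congruence class holds for any modulus. What the fundamental-pair hypotheses actually buy here are ({\sf FP2}) via Lemma \ref{L:02} (converting $\widehat{\phi(y)}\in\widehat{\phi(\w)}[0]+Y_{\GL_R,Q,n}$ into the coset condition modulo $n_\alpha\cdot Y_{\GL_r}$), Lemma \ref{L:crucial} (discarding $\w\notin W_{\GL_r}$), and the validity of the explicit formula $y_\w=-\w[0]+z_\w$ with $z_\w$ given by the descent set $I_\w$, which is what the paper uses afterwards and which does require $n_\alpha\ge r$.
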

By Lemma \ref{L:02}, we see that $y_\w=-\w[0] + z_\w$ for some $z_\w \in n_\alpha \cdot Y_{\GL_r}$. Here $z_\w$ is described as follows (see \cite[pg. 164]{Suz1}):
\begin{enumerate}
\item[$\bullet$] First, identify $\w \in W_{\GL_r}$ with a permutation of $\set{1, 2, ..., r}$ such that
$$\w \Big( \sum_i k_i e_i \Big)= \sum_i k_{\w^{-1}(i)} e_i. $$
Consider the following set
$$I_\w=\set{k\in [1, r-1] \cap \Z: \ \w^{-1}(k) > \w^{-1}(k+1)}.$$
\item[$\bullet$] Then, we have 
\begin{equation} \label{F:z-w}
z_\w=\sum_{k\in I_\w} n_\alpha\cdot (e_1+ e_2 + ... + e_k).
\end{equation}
\end{enumerate}
Again by Lemma \ref{L:02}, one has 
$$\widehat{y_\w}=\widehat{\w}[0] + \widehat{z_\w} \in Y_{\GL_r}$$
and
$$\widehat{\phi(y_\w)}=\widehat{\phi(\w)}[0] + \widehat{\phi(z_\w)} \in Y_{\GL_R}.$$

\begin{lm} \label{L:key2}
For any $\w\in W_{\GL_r}$, we have
$$\cc_{\mca{O}_0}^{\GL_r}(\s_{\widehat{y_\w}})=\prod_{k\in I_\w} q^{k\big(\frac{r+1}{2}-\nu_r \big)-\frac{k(k+1)}{2}} \cdot \cc^{\GL_r}_{\mca{O}_0}(\s_{\widehat{\w}[0]})$$
and
$$\cc_{\mca{O}_0}^{\GL_R}(\s_{\widehat{\phi(y_\w)}})=\prod_{k\in I_\w} q^{k\big(\frac{R+1}{2}-\nu_R \big)-\frac{k(k+1)}{2}} \cdot \cc^{\GL_R}_{\mca{O}_0}(\s_{\widehat{\phi(\w)}[0]}).$$
\end{lm}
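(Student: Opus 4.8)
The plan is to reduce the statement to the recursive defining relation \eqref{F:cc} for $\cc_{\mca{O}_0}$, namely $\cc_{\mca{O}_0}(\s_{\w[0]})=\mathbf{t}(\w,0)$ with $\mathbf{t}$ multiplicative along minimal decompositions, and to track exactly how appending the $z_\w$-translation changes the value. Since the two displayed formulas are formally identical except for the substitution $r\leftrightarrow R$ and $\nu_r\leftrightarrow\nu_R$, and since $\widehat{\phi(y_\w)}=\widehat{\phi(\w)}[0]+\widehat{\phi(z_\w)}$ with $\phi(z_\w)$ of the same shape as $z_\w$ (only living in $Y_{\GL_R}$), it suffices to prove the first identity and then invoke verbatim the $\GL_R$-analogue. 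So first I would fix $\w\in W_{\GL_r}$ and recall from the preceding lemma that $\widehat{y_\w}=\widehat{\w}[0]+\widehat{z_\w}$ where $z_\w=\sum_{k\in I_\w} n_\alpha(e_1+\dots+e_k)$ and hence $\widehat{z_\w}=\w_{0,r}(z_\w)=\sum_{k\in I_\w} n_\alpha(e_{r}+e_{r-1}+\dots+e_{r-k+1})$, i.e. a sum of the "top" $k$ basis vectors scaled by $n_\alpha$.

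Next I would choose a convenient minimal decomposition realizing $\s_{\widehat{y_\w}}$ from $\s_{\widehat{\w}[0]}$. The key point is that $\widehat{z_\w}\in n_\alpha\cdot Y_{\GL_r}$, so by \eqref{F:nY} it lies in $Y_{\GL_r,Q,n}$, and translating by an element of $Y_{\GL_r,Q,n}$ twists $\cc_{\mca{O}_0}$ by the character $\chi$ (here $\mu$) evaluated on the corresponding central element. Concretely, using the $\wt{A}$-equivariance $\cc_{\mca{O}_0}(\s_{\w[0]}\cdot\wt z)=\cc_{\mca{O}_0}(\s_{\w[0]})\cdot\mu(\wt z)$, I would write $\s_{\widehat{y_\w}}=\s_{\widehat{\w}[0]}\cdot\wt z_0\cdot(\text{correction})$ where $\wt z_0$ represents $\widehat{z_\w}$; but one must be careful that $\s$ is not multiplicative, so the product $\s_{\widehat{\w}[0]+\widehat{z_\w}}$ differs from $\s_{\widehat{\w}[0]}\cdot\s_{\widehat{z_\w}}$ by a Hilbert symbol $(\varpi,\varpi)_n^{D(\widehat{\w}[0],\widehat{z_\w})}$, which is trivial by Lemma \ref{L:2n}'s mechanism ($(\varpi,\varpi)_n=1$ since $\bbmu_{2n}\subseteq F^\times$). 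Hence $\cc_{\mca{O}_0}(\s_{\widehat{y_\w}})=\cc_{\mca{O}_0}(\s_{\widehat{\w}[0]})\cdot\mu_{n_\alpha}(\varpi^{\widehat{z_\w}})$ where $\mu_{n_\alpha}$ is the linear character of Lemma \ref{L:exp-ch}, using that $\widehat{z_\w}\in n_\alpha Y_{\GL_r}$ so $\mu(\s(\varpi^{\widehat{z_\w}}))=\mu_{n_\alpha}(\varpi^{\widehat{z_\w}})$.

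Then I would compute $\mu_{n_\alpha}(\varpi^{\widehat{z_\w}})$ explicitly from \eqref{F:mu}: since $\mu_{n_\alpha}[\nu_r]=\bigotimes_{i=1}^r|\cdot|^{\frac{r+1}{2}+\nu_r-i}$ on $(n_\alpha\Z e_i)\otimes F^\times$, and $\widehat{z_\w}=\sum_{k\in I_\w} n_\alpha(e_r+\dots+e_{r-k+1})$, each summand $n_\alpha(e_r+\dots+e_{r-k+1})$ contributes $|\varpi|^{\sum_{j=r-k+1}^{r}(\frac{r+1}{2}+\nu_r-j)}=q^{-\sum_{j=r-k+1}^{r}(\frac{r+1}{2}+\nu_r-j)}$. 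A short arithmetic check gives $\sum_{j=r-k+1}^{r}\big(\tfrac{r+1}{2}+\nu_r-j\big)=k\big(\tfrac{r+1}{2}+\nu_r\big)-\big(rk-\tfrac{k(k-1)}{2}\big)=k\nu_r-\tfrac{k(k+1)}{2}+k$ — and I would re-derive the exact constant to match the claimed exponent $k(\tfrac{r+1}{2}-\nu_r)-\tfrac{k(k+1)}{2}$, which (I anticipate) comes out correctly once one is careful about the sign convention $|\varpi|=q^{-1}$ and about whether the relevant weights are $i$ or $r+1-i$ under the action of $\w_{0,r}$ on $z_\w$ versus $\widehat{z_\w}$; indeed the passage from $z_\w$ (bottom $k$ coordinates) to $\widehat{z_\w}$ (top $k$ coordinates) is precisely what flips $\nu_r\mapsto -\nu_r$ relative to a naive guess, and is the reason the formula has $-\nu_r$ rather than $+\nu_r$. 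Finally, taking the product over $k\in I_\w$ yields the first displayed identity; the second follows by replacing $r$ by $R$ throughout, using $\chi_{n_\alpha}[\nu_R]=\bigotimes_{i=1}^R|\cdot|^{\frac{R+1}{2}+\nu_R-i}$ from \eqref{F:chi} and $\widehat{\phi(z_\w)}=\sum_{k\in I_\w}n_\alpha(e_R+\dots+e_{R-k+1})$, noting that $\phi(z_\w)$ only involves $e_1,\dots,e_r$ but $\w_{0,R}$ sends these to $e_R,\dots,e_{R-r+1}$.

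The main obstacle I expect is bookkeeping the sign/weight conventions carefully enough to land on exactly $k(\tfrac{r+1}{2}-\nu_r)-\tfrac{k(k+1)}{2}$ rather than a variant: one has to be precise about (i) which coordinates of $Y$ the translation $\widehat{z_\w}$ touches after applying $\w_{0,r}$, (ii) the orientation of the character $\mu_{n_\alpha}[\nu_r]$ relative to the standard ordering $e_1,\dots,e_r$, and (iii) the normalization $\cc_{\mca{O}_0}(\s_0)=1$ so that no stray $\delta_B$ factor sneaks in (it does not, since $\cc_{\mca{O}_0}$ itself carries no modular twist — that lives in $\mca{W}$ via Proposition \ref{P:Wh-Th}). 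Once those conventions are pinned down, the computation is a one-line geometric-series-free evaluation of a linear character on an explicit lattice vector, and the $\GL_R$ case is literally the same computation with $r$ replaced by $R$.
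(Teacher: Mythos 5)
Your argument is essentially the paper's own proof: write $\widehat{y_\w}=\widehat{\w}[0]+\widehat{z_\w}$, use $(\varpi,\varpi)_n=1$ together with the $\wt{A}$-equivariance of $\cc_{\mca{O}_0}$ to factor out $\mu(\s_{\widehat{z_\w}})$, and then evaluate the linear character $\mu_{n_\alpha}[\nu_r]$ (resp.\ $\chi_{n_\alpha}[\nu_R]$) on $\widehat{z_\w}$ (resp.\ $\widehat{\phi(z_\w)}$) coordinate by coordinate, exactly as in the paper. The only blemish is your intermediate simplification $\sum_{j=r-k+1}^{r}\big(\tfrac{r+1}{2}+\nu_r-j\big)=k\nu_r-\tfrac{k(k+1)}{2}+k$, which is off: the sum equals $k\nu_r+\tfrac{k(k-r)}{2}$, whose negative is precisely the asserted exponent $k\big(\tfrac{r+1}{2}-\nu_r\big)-\tfrac{k(k+1)}{2}$, so the formula you anticipated does come out correctly.
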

\begin{proof}
We show the first equality, while the second is analogous. First, since $\widehat{y_\w}= \widehat{\w}[0] + \widehat{z_\w}$ with $\widehat{z_\w}\in n_\alpha \cdot Y_{\GL_r, Q,n}$, we have
$$\s_{\widehat{y_\w}}= \s_{\widehat{\w}[0]}\cdot  \s_{\widehat{z_\w}}.$$
and therefore
$$\cc_{\mca{O}_0}^{\GL_r}(\s_{\widehat{y_\w}})=\cc_{\mca{O}_0}^{\GL_r}(\s_{\widehat{\w}[0]}) \cdot \mu(\s_{\widehat{z_\w}}).$$
For any $y\in n_\alpha \cdot Y_{\GL_r}$, write $\mu(y):=\mu(\s_y)$ temporarily. By (\ref{F:mu}), we have
$$ \mu\left(\widehat{\sum_{i=1}^k n_\alpha e_i} \right) = \prod_{i=r+1-k}^r \mu(n_\alpha e_i) = \prod_{i=r+1-k}^r q^{-\big(\frac{r+1}{2} + \nu_r -i \big)} = \prod_{i=1}^k q^{\frac{r+1}{2} - \nu_r -i},$$
which is clearly equal to 
$$q^{k\big(\frac{r+1}{2}-\nu_r \big)-\frac{k(k+1)}{2}}.$$
In view of (\ref{F:z-w}), the proof is completed.
\end{proof}

To proceed, we define a function
$$\tau: W_{\GL_r} \to \C$$
by
$$\tau(\w):=l(\w) +  \sum_{k\in I_\w} k\cdot \left( n_\alpha s-\frac{R+r}{2} +\nu_r + \overline{\nu_R} +k \right).$$

\begin{cor} \label{C:key-mul}
For any $\w\in W_{\GL_r}$, one has
$$\overline{\cc^{\GL_R}_{\mca{O}_0}}(\s_{\widehat{\phi(y_\w)}}) \cdot \cc^{\GL_r}_{\mca{O}_0}(\s_{\widehat{y_\w}})  \cdot |\det(\s_{\widehat{y_\w}})|^s=q^{-\tau(\w)}.$$
\end{cor}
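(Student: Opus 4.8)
The statement is a bookkeeping consequence of the two immediately preceding results, so the plan is to assemble it from Lemma~\ref{L:key2}, Corollary~\ref{C:key1}, and an elementary computation of the remaining factor $|\det(\s_{\widehat{y_\w}})|^s$.

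First I would apply Lemma~\ref{L:key2} to rewrite both theta--Whittaker coefficients attached to $\widehat{y_\w}$ and $\widehat{\phi(y_\w)}$ in terms of the ``base'' values $\cc^{\GL_r}_{\mca{O}_0}(\s_{\widehat{\w}[0]})$ and $\cc^{\GL_R}_{\mca{O}_0}(\s_{\widehat{\phi(\w)}[0]})$, each multiplied by an explicit product over $k\in I_\w$ of powers of $q$. Since $q$ is a positive real, complex conjugation acts on these prefactors only through $\nu_R\mapsto\overline{\nu_R}$, and one gets
$$\cc^{\GL_r}_{\mca{O}_0}(\s_{\widehat{y_\w}})\cdot\overline{\cc^{\GL_R}_{\mca{O}_0}(\s_{\widehat{\phi(y_\w)}})} = \Big(\prod_{k\in I_\w} q^{k(\frac{r+1}{2}-\nu_r)-\frac{k(k+1)}{2}}\cdot q^{k(\frac{R+1}{2}-\overline{\nu_R})-\frac{k(k+1)}{2}}\Big)\cdot \cc^{\GL_r}_{\mca{O}_0}(\s_{\widehat{\w}[0]})\cdot\overline{\cc^{\GL_R}_{\mca{O}_0}(\s_{\widehat{\phi(\w)}[0]})}.$$
By Corollary~\ref{C:key1} the product of base values equals $q^{-l(\w)}$, and for each $k$ the two prefactors combine, using the identity $\tfrac{r+1}{2}+\tfrac{R+1}{2}-(k+1)=\tfrac{R+r}{2}-k$, to $q^{k(\frac{R+r}{2}-\nu_r-\overline{\nu_R}-k)}$.

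Next I would compute $|\det(\s_{\widehat{y_\w}})|^s$. For $y=\sum_i k_i e_i$ one has $|\det(\s_y)|=q^{-\sum_i k_i}$, so I only need the sum of the coordinates of $\widehat{y_\w}=\widehat{\w}[0]+\widehat{z_\w}$. Now $\w[0]=\rho_{\GL_r}-\w(\rho_{\GL_r})$ has coordinate sum zero, and $y\mapsto\widehat{y}=\w_{0,r}(y)$ merely permutes coordinates, so $\widehat{\w}[0]$ has coordinate sum zero as well; by (\ref{F:z-w}) the coordinate sum of $z_\w$, hence of $\widehat{z_\w}$, equals $n_\alpha\sum_{k\in I_\w}k$. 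Thus $|\det(\s_{\widehat{y_\w}})|^s=q^{-n_\alpha s\sum_{k\in I_\w}k}$. Multiplying this into the previous display and collecting the exponent of $q$ yields $q^{-l(\w)-\sum_{k\in I_\w}k(n_\alpha s-\frac{R+r}{2}+\nu_r+\overline{\nu_R}+k)}=q^{-\tau(\w)}$, which is the claim.

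I do not expect any real obstacle here; the only points requiring care are that conjugation touches only the prefactor built from $\nu_R$ (not $q$, and nothing on the $\GL_r$ side), and that the two ``hat'' operations in $\widehat{y_\w}$ are coordinate permutations, so that the determinant factor is exactly $q^{-n_\alpha s\sum_{k\in I_\w}k}$. Granting these observations, the identity follows by direct substitution from Lemma~\ref{L:key2} and Corollary~\ref{C:key1}.
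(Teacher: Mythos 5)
Your proof is correct and follows exactly the route the paper intends: the paper's own proof is simply the citation of Corollary \ref{C:key1} and Lemma \ref{L:key2}, and your write-up supplies the same assembly, together with the (correct) elementary observations that conjugation only replaces $\nu_R$ by $\overline{\nu_R}$ and that $|\det(\s_{\widehat{y_\w}})|^s=q^{-n_\alpha s\sum_{k\in I_\w}k}$ since $\w[0]$ has coordinate sum zero and the hat is a coordinate permutation.
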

\begin{proof}
This follows from Corollary \ref{C:key1} and Lemma \ref{L:key2}.
\end{proof}

For any $2\le i_0\le r$, we put $\sigma_{i_0}:=(1 2... i_0) \in W_{\GL_r}$ for the $i_0$-cycle. Then 
 \begin{equation} \label{F:tau-cyc}
\tau(\sigma_{i_0})=n_\alpha s-\frac{R+r}{2} +\nu_r + \overline{\nu_R} + i_0.
\end{equation}

\begin{lm}[{ \cite[Lemma 4]{Suz1} }]  \label{L:suz}
With the above notation, we have
$$\tau(\sigma_2^{i_2} \sigma_3^{i_3} ... \sigma_j^{i_j} ... \sigma_r^{i_r})=i_2 \tau(\sigma_2) + ... + i_j \tau(\sigma_j) + ...+ i_r \tau(\sigma_r),$$
where $0\le i_j \le j-1$ for any $2\le j\le r$.
\end{lm}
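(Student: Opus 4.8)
\textbf{Proof proposal for Lemma \ref{L:suz}.}

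The plan is to prove this by induction on $\sum_{j=2}^r i_j$, reducing the general case to a single step in which one multiplies a given element of the form $\w = \sigma_2^{i_2}\cdots \sigma_r^{i_r}$ on the left (or right) by one more copy of some cycle $\sigma_{i_0}$, and checking that $\tau$ behaves additively under this operation. Since $\tau(\w) = l(\w) + \sum_{k\in I_\w} k\cdot\big(n_\alpha s - \frac{R+r}{2} + \nu_r + \overline{\nu_R} + k\big)$, and the coefficient $n_\alpha s - \frac{R+r}{2} + \nu_r + \overline{\nu_R}$ is a fixed constant, it suffices to track two discrete quantities separately: the length $l(\w)$, and the multiset-sum $\sum_{k\in I_\w} k$ together with $\sum_{k\in I_\w} k^2$ (the latter coming from the $+k$ inside the parenthesis, i.e.\ from the $\sum_k k\cdot k$ term). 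Concretely, first I would record $\tau(\sigma_{i_0})$ from \eqref{F:tau-cyc}, namely $\tau(\sigma_{i_0}) = n_\alpha s - \frac{R+r}{2} + \nu_r + \overline{\nu_R} + i_0$, which corresponds to $I_{\sigma_{i_0}} = \{i_0-1\}$ and $l(\sigma_{i_0}) = i_0-1$: indeed $l(\sigma_{i_0}) + (i_0-1)\big(\text{const} + (i_0-1)\big) = (i_0-1) + (i_0-1)\,\text{const} + (i_0-1)^2 = (i_0-1)i_0 + (i_0-1)\,\text{const}$, wait — one must be careful, so the first real step is to re-derive \eqref{F:tau-cyc} cleanly and see exactly which $I_{\sigma_{i_0}}$ and length it encodes.

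The heart of the argument is a combinatorial claim about the specific word $\w = \sigma_2^{i_2}\sigma_3^{i_3}\cdots\sigma_r^{i_r}$ with $0\le i_j\le j-1$: namely that for this particular reduced-type expression, the length $l(\w)$ equals $\sum_j l(\sigma_j^{i_j})$ (so the word is in fact reduced, or at least length-additive across the blocks), and the descent-type set $I_\w$ decomposes compatibly so that $\sum_{k\in I_\w} k = \sum_j \big(\sum_{k\in I_{\sigma_j^{i_j}}} k\big)$ and likewise for $\sum k^2$. The cleanest route is to identify $\w$ explicitly as a permutation of $\{1,\dots,r\}$: the element $\sigma_2^{i_2}\cdots\sigma_r^{i_r}$, with the convention $\w(\sum k_i e_i) = \sum k_{\w^{-1}(i)} e_i$ from before \eqref{F:z-w}, is a well-known parametrization of $W_{\GL_r} = S_r$ by "inversion tables" / Lehmer codes, with $i_j$ recording the number of inversions involving position $j$. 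One then reads off $I_\w = \{k : \w^{-1}(k) > \w^{-1}(k+1)\}$ and the total inversion count $l(\w)$ directly from the $i_j$'s, and verifies that both $\sum_{k\in I_\w}k$ and $\sum_{k\in I_\w}k^2$ are additive over the blocks. Granting this, one concludes
\[
\tau(\w) = \sum_{j=2}^r \Big( l(\sigma_j^{i_j}) + \sum_{k\in I_{\sigma_j^{i_j}}} k\big(\text{const}+k\big)\Big) = \sum_{j=2}^r i_j\,\tau(\sigma_j),
\]
where the last equality uses that $\sigma_j^{i_j}$ has $I_{\sigma_j^{i_j}}$ and length making $\tau(\sigma_j^{i_j}) = i_j\tau(\sigma_j)$ — this sub-identity for a single block $\sigma_j^{i_j}$ is itself the $r=j$, single-index case and should be checked first as the base case.

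The main obstacle is the combinatorial bookkeeping in identifying $I_\w$ and $l(\w)$ for the word $\sigma_2^{i_2}\cdots\sigma_r^{i_r}$: one must pin down the exact permutation, confirm that the expression is length-additive across blocks (this is where the constraint $0\le i_j\le j-1$ is essential — it is precisely the range making the inversion-table parametrization a bijection onto $S_r$ with additive lengths), and then verify the two sum identities $\sum_{k\in I_\w}k = \sum_j(\cdots)$ and $\sum_{k\in I_\w}k^2 = \sum_j(\cdots)$. Since this is exactly the content of \cite[Lemma 4]{Suz1} in the Kazhdan--Patterson case and our $\tau$ differs from Suzuki's only by the value of the constant $n_\alpha s - \frac{R+r}{2} + \nu_r + \overline{\nu_R}$ (the permutation-combinatorics being identical), I would carry out the verification for small $j$ to fix conventions and then cite \cite[Lemma 4]{Suz1} for the general combinatorial identity, remarking that the argument is verbatim.
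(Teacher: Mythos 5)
The paper offers no argument for this lemma at all: it is quoted verbatim from \cite[Lemma 4]{Suz1}, the point being that the generalized $\tau$ differs from Suzuki's only in the value of the constant $c:=n_\alpha s-\frac{R+r}{2}+\nu_r+\overline{\nu_R}$, and since both sides of the asserted identity are affine functions of $c$ (and $s$ is a free variable in Suzuki's setting anyway), Suzuki's combinatorial proof carries over unchanged. Your concluding fallback --- fix conventions on small cases and then cite \cite[Lemma 4]{Suz1} as verbatim --- therefore coincides with what the paper does and is acceptable as far as it goes.

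However, the independent argument you sketch before that fallback would not survive being written out, for two concrete reasons. First, with the convention forced by (\ref{F:tau-cyc}) one has $I_{\sigma_{i_0}}=\{1\}$ and $l(\sigma_{i_0})=i_0-1$, so $\tau(\sigma_{i_0})=(i_0-1)+1\cdot(c+1)=c+i_0$; your tentative $I_{\sigma_{i_0}}=\{i_0-1\}$ is what made your arithmetic fail to close, and you left the point unresolved. Second, and more seriously, the claimed block-additivity of $l(\w)$, $\sum_{k\in I_\w}k$ and $\sum_{k\in I_\w}k^2$ over the factors $\sigma_j^{i_j}$ is false for the first and third quantities. Already in $W_{\GL_3}$ the element $\w=\sigma_2\sigma_3$ is the simple reflection exchanging $2$ and $3$, so $l(\w)=1$ while $l(\sigma_2)+l(\sigma_3)=1+2=3$, and $I_\w=\{2\}$ gives $\sum_{k\in I_\w}k^2=4$ while the blocks contribute $1+1=2$; the two failures cancel only in the combination. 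Even within a single block the separate quantities are not additive: $l(\sigma_j^{i})=i(j-i)\ne i\,l(\sigma_j)$ and $I_{\sigma_j^{i}}=\{i\}$, so $\tau(\sigma_j^{i})=i(j-i)+i(c+i)=i\tau(\sigma_j)$ holds, but not for the reason you give. What is actually true (and is the content of Suzuki's lemma) are the two identities
$$\sum_{k\in I_\w}k=\sum_{j=2}^r i_j, \qquad l(\w)+\sum_{k\in I_\w}k^2=\sum_{j=2}^r j\, i_j,$$
for $\w=\sigma_2^{i_2}\cdots\sigma_r^{i_r}$ with $0\le i_j\le j-1$, and these must be proved directly (e.g.\ from the explicit one-line form of $\w$ and its descent set), not deduced from termwise additivity; note also that an induction that multiplies by one more cycle must cope with the fact that multiplication by $\sigma_j$ can decrease length, as the same $S_3$ example shows. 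So the ``heart of the argument'' as you state it is a step that would fail; only the final citation to \cite{Suz1} rescues the proposal, and that is precisely the paper's own proof.
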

Note that $W_{\GL_r}=\set{\sigma_2^{i_2} \sigma_3^{i_3} ... \sigma_j^{i_j} ... \sigma_r^{i_r}: \ 0\le i_j \le j-1 \text{ for all } 2\le j \le r}$. The zeta function $\zeta(s)$ is defined as $\zeta(s):=(1-q^{-s})^{-1}, s\in \C$. From this we obtain the following result.

\begin{prop} \label{P:sumW}
With notations above, one has
$$\sum_{\w \in W_{\GL_r}}  \overline{\cc^{\GL_R}_{\mca{O}_0}}(\s_{\widehat{\phi(y_\w)}}) \cdot \cc^{\GL_r}_{\mca{O}_0}(\s_{\widehat{y_\w}})  \cdot |\det(\s_{\widehat{y_\w}})|^s=\prod_{j=2}^r \frac{\zeta(\tau(\sigma_j))}{\zeta(j\cdot \tau(\sigma_j))}.$$
\end{prop}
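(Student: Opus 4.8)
The plan is to compute the sum $\sum_{\w\in W_{\GL_r}} q^{-\tau(\w)}$ directly, since by Corollary~\ref{C:key-mul} the left-hand side equals exactly this quantity. First I would invoke the bijective parametrization $W_{\GL_r}=\set{\sigma_2^{i_2}\cdots\sigma_r^{i_r}:\ 0\le i_j\le j-1}$, so that summing over $\w$ becomes summing over all tuples $(i_2,\dots,i_r)$ in the product $\prod_{j=2}^r\set{0,1,\dots,j-1}$. By Lemma~\ref{L:suz}, $\tau$ is additive on this parametrization, namely $\tau(\sigma_2^{i_2}\cdots\sigma_r^{i_r})=\sum_{j=2}^r i_j\tau(\sigma_j)$. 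Hence the sum factors:
$$\sum_{\w\in W_{\GL_r}} q^{-\tau(\w)}=\prod_{j=2}^r\left(\sum_{i_j=0}^{j-1} q^{-i_j\tau(\sigma_j)}\right)=\prod_{j=2}^r\left(\sum_{i=0}^{j-1}\bigl(q^{-\tau(\sigma_j)}\bigr)^{i}\right).$$

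Next I would evaluate each inner geometric sum. Writing $x_j:=q^{-\tau(\sigma_j)}$, the finite geometric series gives
$$\sum_{i=0}^{j-1} x_j^{\,i}=\frac{1-x_j^{\,j}}{1-x_j}=\frac{1-q^{-j\tau(\sigma_j)}}{1-q^{-\tau(\sigma_j)}}.$$
In terms of the zeta function $\zeta(s)=(1-q^{-s})^{-1}$ this is precisely $\zeta(\tau(\sigma_j))/\zeta(j\cdot\tau(\sigma_j))$. Taking the product over $j=2,\dots,r$ yields the claimed identity
$$\sum_{\w\in W_{\GL_r}} \overline{\cc^{\GL_R}_{\mca{O}_0}}(\s_{\widehat{\phi(y_\w)}})\cdot\cc^{\GL_r}_{\mca{O}_0}(\s_{\widehat{y_\w}})\cdot|\det(\s_{\widehat{y_\w}})|^s=\prod_{j=2}^r\frac{\zeta(\tau(\sigma_j))}{\zeta(j\cdot\tau(\sigma_j))}.$$

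There is essentially no serious obstacle here: the two genuinely substantive inputs (the product decomposition of $W_{\GL_r}$ into powers of the cycles $\sigma_j$, and the additivity of $\tau$ under this decomposition) are supplied by Lemma~\ref{L:suz}, and the conversion from $\cc$-values to powers of $q$ is Corollary~\ref{C:key-mul}; what remains is the elementary factorization of a sum over a product set together with the closed form of a finite geometric series. The only point requiring a word of care is that the geometric-series collapse is purely formal and valid for all $s$ as an identity of rational functions in $q^{-s}$ (one does not need $|q^{-\tau(\sigma_j)}|<1$), so the equality should be read as one between meromorphic functions of $s$, matching the analytic-continuation conventions already in force for $\msc{Z}(s,-)$ and the intertwining operators. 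I would also remark in passing that this recovers, for a fundamental pair of Kazhdan--Patterson coverings where $n_\alpha=n$, the corresponding computation in \cite{Suz1}.
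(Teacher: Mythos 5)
Your proposal is correct and coincides with the paper's own argument: both reduce the left-hand side to $\sum_{\w} q^{-\tau(\w)}$ via Corollary \ref{C:key-mul}, sum over the parametrization $W_{\GL_r}=\set{\sigma_2^{i_2}\cdots\sigma_r^{i_r}}$ using the additivity of $\tau$ from Lemma \ref{L:suz}, and evaluate the resulting finite geometric series as $\zeta(\tau(\sigma_j))/\zeta(j\cdot\tau(\sigma_j))$. Your extra remark that the geometric-series identity is a formal identity of rational functions in $q^{-s}$ is a harmless clarification not spelled out in the paper.
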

\begin{proof}
By Corollary \ref{C:key-mul}, the left hand side is equal to
\begin{align*}
& \sum_{\w \in W_{\GL_r}} q^{-\tau(\w)} \\
=& \sum_{i_2, i_3, ..., i_r} q^{-\tau(\sigma_2^{i_2} \sigma_3^{i_3} ... \sigma_j^{i_j} ... \sigma_r^{i_r})} \\
=& \sum_{i_2, i_3, ..., i_r} q^{-(\sum_{2\le j\le r} i_j \tau(\sigma_j))} \text{ by Lemma \ref{L:suz}}\\
=& \prod_{j=2}^r \big(1+ q^{-\tau(\sigma_j)} + ... +  q^{-(j-1)\tau(\sigma_j)} \big),
\end{align*}
which is clearly equal to
$$\prod_{j=2}^r \frac{\zeta(\tau(\sigma_j))}{\zeta(j\cdot \tau(\sigma_j))}.$$
This completes the proof.
\end{proof}

It follows from Proposition \ref{P:Wh-Th} that
\begin{align*}
& \msc{Z}(s, \overline{\mca{W}_{\mca{O}_0}^{\GL_R}} \times \mca{W}_{\mca{O}_0}^{\GL_r})\\
=&\sum_{\w \in W_{\GL_r}} \sum_{y\in Y_{\GL_r,\w}} \overline{\mca{W}_{\mca{O}_0}^{\GL_R}}(\s_{\phi(y)}) \cdot \mca{W}_{\mca{O}_0}^{\GL_r}(\s_y) \cdot |\det(\s_y)|^s \cdot \delta_{B_r}^{-1/2}(\s_y) \cdot \delta_{B_R}^{-1/2}(\s_{\phi(y)}) \\
=&\overline{\mca{W}_{\mca{O}_0}^{\GL_R}}(1) \cdot \mca{W}_{\mca{O}_0}^{\GL_r}(1) \cdot \sum_{\w \in W_{\GL_r}} \sum_{y\in Y_{\GL_r,\w}} \overline{\cc_{\mca{O}_0}^{\GL_R} }(\s_{\widehat{\phi(y)}}) \cdot \cc_{\mca{O}_0}^{\GL_r}(\s_{\widehat{y}}) \cdot |\det(\s_y)|^s \\
=&\overline{\mca{W}_{\mca{O}_0}^{\GL_R}}(1) \cdot \mca{W}_{\mca{O}_0}^{\GL_r}(1) \cdot \sum_{\w \in W_{\GL_r}} \sum_{y^\dag \in Y_{\GL_r}^\dag} \overline{\cc^{\GL_R}_{\mca{O}_0}}(\s_{\widehat{\phi(y_\w)}+\widehat{\phi(y^\dag)}}) \cdot \cc^{\GL_r}_{\mca{O}_0}(\s_{\widehat{y_\w}+ \widehat{y^\dag}}) \cdot |\det(\s_{\widehat{y_\w}})|^s \cdot |\det(\s_{\widehat{y^\dag}})|^s \\
=& \overline{\mca{W}_{\mca{O}_0}^{\GL_R}}(1) \cdot \mca{W}_{\mca{O}_0}^{\GL_r}(1) \\
& \quad  \cdot \sum_{y^\dag \in Y_{\GL_r}^\dag} \sum_{\w \in W_{\GL_r}}  \overline{\cc^{\GL_R}_{\mca{O}_0}}(\s_{\widehat{\phi(y_\w)}}) \cdot \overline{\chi}(\s_{\widehat{\phi(y^\dag)}}) \cdot \cc_{\mca{O}_0}^{\GL_r}(\s_{\widehat{y_\w}}) \cdot \mu(\s_{\widehat{y^\dag}}) \cdot |\det(\s_{\widehat{y_\w}})|^s \cdot |\det(\s_{\widehat{y^\dag}})|^s  \\
=& \overline{\mca{W}_{\mca{O}_0}^{\GL_R}}(1) \cdot \mca{W}_{\mca{O}_0}^{\GL_r}(1) \cdot \Big( \sum_{y^\dag \in Y_{\GL_r}^\dag} \overline{\chi}(\s_{\widehat{\phi(y^\dag)}}) \cdot \mu(\s_{\widehat{y^\dag}}) \cdot |\det(\s_{\widehat{y^\dag}})|^s \Big) \\
& \quad \cdot \Big( \sum_{\w \in W_{\GL_r}}  \overline{ \cc^{\GL_R}_{\mca{O}_0}}(\s_{\widehat{\phi(y_\w)}}) \cdot \cc^{\GL_r}_{\mca{O}_0}(\s_{\widehat{y_\w}})  \cdot |\det(\s_{\widehat{y_\w}})|^s \Big).
\end{align*}
To complete the computation for $\msc{Z}(s, \overline{\mca{W}_{\mca{O}_0}^{\GL_R}} \times \mca{W}_{\mca{O}_0}^{\GL_r})$, we show the following result.

\begin{prop} \label{P:sum-dag}
Let $\chi$ (resp. $\mu$) be an unramified exceptional character of $\wt{\GL}_{R}$ (resp. $\wt{\GL}_r$). Then
$$\sum_{y^\dag \in Y_{\GL_r}^\dag} \overline{\chi}(\s_{\widehat{\phi(y^\dag)}}) \cdot \mu(\s_{\widehat{y^\dag}}) \cdot |\det(\s_{\widehat{y^\dag}})|^s=\prod_{j=1}^r \zeta(j\cdot \tau(\sigma_j)).$$
\end{prop}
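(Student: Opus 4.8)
The plan is to evaluate the sum term by term over an explicit parametrization of $Y_{\GL_r}^\dag$. Writing $y^\dag = n_\alpha \sum_{i=1}^r k_i e_i$, the condition $\angb{\phi(y^\dag)}{\alpha} \ge 0$ for all $\alpha \in \Delta_{\GL_R}$ amounts, as recorded right after (\ref{dWh}), to $k_1 \ge k_2 \ge \cdots \ge k_r \ge 0$, so that $Y_{\GL_r}^\dag$ is indexed by partitions of length at most $r$. Since the long Weyl elements act by $\w_{0,r}(e_i) = e_{r+1-i}$ and $\w_{0,R}(e_i) = e_{R+1-i}$, we have $\widehat{y^\dag} = n_\alpha \sum_i k_i e_{r+1-i}$ and $\widehat{\phi(y^\dag)} = n_\alpha \sum_i k_i e_{R+1-i}$; these lie in $n_\alpha Y_{\GL_r}$ and in $n_\alpha Y_{\GL_R} = Y_{\GL_R, Q, n}$ respectively, so $\mu$ and $\chi$ may be evaluated at the corresponding sections of the torus through the linear characters $\mu_{n_\alpha}$ and $\chi_{n_\alpha}$ of Lemma \ref{L:exp-ch} (the values being well defined by Lemma \ref{L:split}, both groups of a fundamental pair with $n\mid(\p\cdot n_\alpha)$ satisfying (\ref{F:C1}) and (\ref{F:C2})).

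Next I would substitute the explicit shapes (\ref{F:mu}) and (\ref{F:chi}). A direct computation, after the reindexing $i \mapsto r+1-i$ (resp. $i \mapsto R+1-i$) of the tensor factors, gives
\[
\mu(\s_{\widehat{y^\dag}}) = \prod_{i=1}^r q^{-k_i\left(i - \frac{r+1}{2} + \nu_r\right)}, \qquad \overline{\chi}(\s_{\widehat{\phi(y^\dag)}}) = \prod_{i=1}^r q^{-k_i\left(i - \frac{R+1}{2} + \overline{\nu_R}\right)},
\]
while $|\det(\s_{\widehat{y^\dag}})|^s = q^{-n_\alpha s \sum_i k_i}$ because $\w_{0,r}$ preserves the sum of coordinates. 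Multiplying the three, the summand attached to $(k_1 \ge \cdots \ge k_r \ge 0)$ is $q^{-\sum_{i=1}^r k_i c_i}$ with
\[
c_i = 2i - \frac{R+r}{2} - 1 + \nu_r + \overline{\nu_R} + n_\alpha s.
\]

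To carry out the sum over partitions, I would pass to the telescoping variables $m_j := k_j - k_{j+1}$ for $1 \le j \le r-1$ and $m_r := k_r$, a bijection between partitions of length $\le r$ and tuples $(m_1, \dots, m_r) \in \N_{\ge 0}^r$, under which $k_i = \sum_{j \ge i} m_j$. Then $\sum_i k_i c_i = \sum_{j=1}^r m_j \big(\sum_{i=1}^j c_i\big)$, and the crux is the identity
\[
\sum_{i=1}^j c_i = \sum_{i=1}^j (2i-1) + j\left(n_\alpha s - \frac{R+r}{2} + \nu_r + \overline{\nu_R}\right) = j^2 + j\left(n_\alpha s - \frac{R+r}{2} + \nu_r + \overline{\nu_R}\right) = j \cdot \tau(\sigma_j),
\]
valid for $1 \le j \le r$ with $\tau(\sigma_j)$ read off from (\ref{F:tau-cyc}) (which in particular fixes the meaning of $\tau(\sigma_1)$ in the statement), using $\sum_{i=1}^j (2i-1) = j^2$. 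Consequently, for $\re(s)$ large, the sum factors as
\[
\sum_{m_1, \dots, m_r \ge 0} \prod_{j=1}^r q^{-m_j\, j\, \tau(\sigma_j)} = \prod_{j=1}^r \frac{1}{1 - q^{-j \tau(\sigma_j)}} = \prod_{j=1}^r \zeta(j \cdot \tau(\sigma_j)),
\]
and the identity for all $s$ follows by analytic continuation.

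The only genuinely delicate ingredient is the bookkeeping: keeping track of the index reversals induced by $\w_{0,r}$ and $\w_{0,R}$ and verifying the shift identity $\sum_{i \le j} c_i = j \tau(\sigma_j)$; after that the computation is just a product of geometric series, parallel to the analogous step in \cite[\S 4]{Suz1}.
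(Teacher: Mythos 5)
Your proof is correct and follows essentially the same route as the paper: parametrize $Y_{\GL_r}^\dag$ by weakly decreasing nonnegative tuples, evaluate $\mu$, $\overline{\chi}$ and the determinant factor via the explicit linear characters (\ref{F:mu}), (\ref{F:chi}), pass to the difference variables (your $m_j$ are the paper's $l_j$), and identify the resulting exponent sums with $j\cdot\tau(\sigma_j)$ from (\ref{F:tau-cyc}) before summing geometric series. The bookkeeping, including the index reversals under $\w_{0,r}$, $\w_{0,R}$ and the reading of $\tau(\sigma_1)$ from (\ref{F:tau-cyc}), matches the paper's computation.
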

\begin{proof}
Any element of $Y_{\GL_r}^\dag$ is of the form $y^\dag=\sum_{i=1}^r (n_\alpha k_i) e_i$ with 
$$k_i=l_i+l_{i+1}+ ... + l_r$$
such that $l_i\ge 0$ for all $i$. We have $\widehat{\phi(y^\dag)}=\sum_{i=1}^r (n_\alpha \cdot k_{n+1-i})e_i$ and $\widehat{y^\dag}= \sum_{i=1}^r (n_\alpha \cdot k_{r+1-i}) e_i$. It follows that
\begin{align*}
& \sum_{y^\dag \in Y_{\GL_r}^\dag} \overline{\chi}(\s_{\widehat{\phi(y^\dag)}})\cdot \mu(\s_{\widehat{y^\dag}}) \cdot |\det(\s_{\widehat{y^\dag}})|^s \\
=&  \sum_{ k_i \text{ as above}} \prod_{i=1}^r \overline{\chi}(\s_{n_\alpha e_{R+1-i}})^{k_i} \cdot \mu(\s_{n_\alpha e_{r+1-i}})^{k_i} \cdot q^{-n_\alpha (\sum_{i=1}^r k_i)s} \\
=& \sum_{ k_i \text{ as above}}q^{-n_\alpha(\sum_{i=1}^r k_i)s}  \cdot  \prod_{i=1}^r  \left(  q^{\frac{R+1}{2}-i - \overline{\nu_R}} \cdot q^{\frac{r+1}{2}-i - \nu_r } \right)^{k_i}\\
=& \sum_{ k_i \text{ as above}}q^{-n_\alpha (\sum_{i=1}^r k_i)s}  \cdot  \prod_{i=1}^r  \left( q^{\frac{R+r+2}{2}-2i - \overline{\nu_R} - \nu_r } \right)^{k_i} \\
=& \sum_{ k_i \text{ as above}} \prod_{i=1}^r  \left( q^{-n_\alpha s + \frac{R+r+2}{2}-2i - \overline{\nu_R} - \nu_r } \right)^{k_i}.
\end{align*}
By substituting $k_i=l_i+ l_{i+1} + ... + l_r$, the above sum is equal to
\begin{align*}
& \sum_{ l_j\ge 0}  \prod_{i=1}^r \prod_{j\ge i}  \left( q^{-n_\alpha s + \frac{R+r+2}{2}-2i - \overline{\nu_R} - \nu_r } \right)^{l_j} \\
=& \sum_{ l_j\ge 0}   \prod_{j=1}^r \prod_{1\le i \le j} \left( q^{-n_\alpha s + \frac{R+r+2}{2}-2i - \overline{\nu_R} - \nu_r } \right)^{l_j}  \\
=&\sum_{ l_j\ge 0}   \prod_{j=1}^r \left( q^{j(-n_\alpha s + \frac{R+r}{2}-j - \overline{\nu_R} - \nu_r) } \right)^{l_j} \\
=&\sum_{ l_j\ge 0}   \prod_{j=1}^r \left(q^{-j\cdot \tau(\sigma_j)} \right)^{l_j} \\
=& \prod_{j=1}^r \zeta \left( j\cdot \tau(\sigma_j)  \right),
\end{align*}
where the second last equality follows from the fact that $\tau(\sigma_j)= n_\alpha s-\frac{R+r}{2}+ j+ \nu_r +\overline{\nu_R}$ as in (\ref{F:tau-cyc}). This completes the proof.
\end{proof}

We give a summary of the main result of this section.
\begin{thm}  \label{T:theta}
Let $(\wt{\GL}_r, \wt{\GL}_R)$ be a fundamental pair of $n$-fold coverings satisfying $n|(\p\cdot n_\alpha)$. Let $\Theta(\wt{\GL}_r, \mu)$ and $\Theta(\wt{\GL}_R, \chi)$ be theta representations of $\wt{\GL}_r$ and $\wt{\GL}_R$ respectively. Let $\mu_{n_\alpha}[\nu_r]$ and $\chi_{n_\alpha}[\nu_R]$ be the linear characters associated to $\mu$ and $\chi$ as in (\ref{F:mu}) and (\ref{F:chi}) respectively. Let $\mca{W}_{\mca{O}_0}^{\GL_r}$ and $\mca{W}^{\GL_R}_{\mca{O}_0}$ be the Whittaker models for the two theta representations. Then,
$$ \msc{Z}(s, \overline{\mca{W}_{\mca{O}_0}^{\GL_R}} \times \mca{W}_{\mca{O}_0}^{\GL_r})=L\big( n_\alpha s-\frac{R-1}{2}, \mu_{n_\alpha} \times \overline{\nu_R} \big) \cdot \overline{\mca{W}_{\mca{O}_0}^{\GL_R}}(1) \cdot \mca{W}_{\mca{O}_0}^{\GL_r}(1).$$
In view of Lemma \ref{L:O-0}, we see that the generalized Bump-Hoffstein conjecture \ref{C:GBH} holds for theta representations.
\end{thm}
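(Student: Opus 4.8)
The plan is to assemble Theorem \ref{T:theta} directly from the chain of reductions and computations already established. First I would observe that, since $\pi = \Theta(\wt{\GL}_r, \mu)$ is a theta representation, any Whittaker model of $\pi$ decomposes as a linear combination $\mca{W}_\cc^{\GL_r} = \sum_i a_i \mca{W}_{\mca{O}_{y_i}}^{\GL_r}$ over representatives $\mca{O}_{y_i}$ of $\wp(\OF_{Q,n})$, so by linearity of $\msc{Z}$ it suffices to treat each $\msc{Z}(s, \overline{\mca{W}_{\mca{O}_0}^{\GL_R}} \times \mca{W}_{\mca{O}_{y_i}}^{\GL_r})$. By Lemma \ref{L:O-0}, every term with $\wp_{\GL_r}(\mca{O}_{y_i}) \ne \wp_{\GL_r}(\mca{O}_0)$ vanishes identically (and also contributes $0$ to $\mca{W}^{\GL_r}(1)$), so the entire identity \eqref{F:BH00} reduces to the single case of the orbit $\mca{O}_0$ on both sides — exactly the statement displayed in Theorem \ref{T:theta}. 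Thus the theorem and the conjecture (for theta representations) are equivalent to the one displayed formula.

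Next I would expand $\msc{Z}(s, \overline{\mca{W}_{\mca{O}_0}^{\GL_R}} \times \mca{W}_{\mca{O}_0}^{\GL_r})$ as a sum over $y \in Y_{\GL_r}$ using \eqref{F:BH}, then use the support condition: $\mca{W}_{\mca{O}_0}^{\GL_R}(\s_{\phi(y)}) = 0$ unless $y \in Y_{\GL_r, \w}$ for some $\w \in W_{\GL_R}$, and by Lemma \ref{L:crucial} in fact only $\w \in W_{\GL_r}$ occur. This rewrites the sum as $\sum_{\w \in W_{\GL_r}} \sum_{y \in Y_{\GL_r, \w}}(\cdots)$, and by the lemma giving $Y_{\GL_r, \w} = y_\w + Y_{\GL_r}^\dag$, I would reparametrize as $\sum_{\w} \sum_{y^\dag \in Y_{\GL_r}^\dag}(\cdots)$. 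Applying Proposition \ref{P:Wh-Th} to both Whittaker functions converts the Whittaker values at $\s_{\phi(y)}$ and $\s_y$ into $\overline{\mca{W}_{\mca{O}_0}^{\GL_R}}(1) \cdot \mca{W}_{\mca{O}_0}^{\GL_r}(1)$ times products of $\cc$-values at $\s_{\widehat{\phi(y)}}$ and $\s_{\widehat y}$ (the modular factors $\delta_{B_r}^{1/2}, \delta_{B_R}^{1/2}$ cancelling against those in \eqref{F:BH}); writing $y = y_\w + y^\dag$ and using that $\s$ is multiplicative on $n_\alpha Y$ (Lemma \ref{L:split}, via $n|(\p n_\alpha)$) factors each summand into a $\w$-part and a $y^\dag$-part. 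This yields the product of two sums displayed just before Proposition \ref{P:sum-dag}.

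Then I would simply invoke Proposition \ref{P:sumW}, which evaluates the sum over $\w \in W_{\GL_r}$ as $\prod_{j=2}^r \zeta(\tau(\sigma_j))/\zeta(j \cdot \tau(\sigma_j))$, and Proposition \ref{P:sum-dag}, which evaluates the sum over $y^\dag \in Y_{\GL_r}^\dag$ as $\prod_{j=1}^r \zeta(j \cdot \tau(\sigma_j))$. Multiplying, the $\zeta(j \cdot \tau(\sigma_j))$ factors cancel for $2 \le j \le r$, leaving
\[
\zeta(1 \cdot \tau(\sigma_1)) \cdot \prod_{j=2}^r \zeta(\tau(\sigma_j)) = \prod_{j=1}^r \zeta(\tau(\sigma_j)),
\]
where I interpret $\tau(\sigma_1) := \tau(\id) = n_\alpha s - \frac{R+r}{2} + 1 + \nu_r + \overline{\nu_R}$ consistently with \eqref{F:tau-cyc}. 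Finally I would identify $\prod_{j=1}^r \zeta(\tau(\sigma_j)) = \prod_{j=1}^r \big(1 - q^{-(n_\alpha s - \frac{R+r}{2} + j + \nu_r + \overline{\nu_R})}\big)^{-1}$ with the unramified local $L$-factor $L(n_\alpha s - \frac{R-1}{2}, \mu_{n_\alpha} \times \overline{\nu_R})$: indeed $\mu_{n_\alpha}[\nu_r] = \bigotimes_{i=1}^r |\cdot|^{\frac{r+1}{2} + \nu_r - i}$ by \eqref{F:mu}, so its Satake parameters are $q^{-(\frac{r+1}{2} + \nu_r - i)}$ for $1 \le i \le r$, and the Rankin–Selberg $L$-factor against the character $\overline{\nu_R}$ (parameter $q^{\overline{\nu_R}}$, up to the shift encoded in the argument $n_\alpha s - \frac{R-1}{2}$) has Euler factor $\prod_{i=1}^r \big(1 - q^{-(n_\alpha s - \frac{R-1}{2})} \cdot q^{-(\frac{r+1}{2} + \nu_r - i)} \cdot q^{\overline{\nu_R}}\big)^{-1}$; after re-indexing $j = r + 1 - i$ one checks the exponent matches $\tau(\sigma_j)$ on the nose. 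This bookkeeping of shifts is the only place requiring care; everything else is assembly. I expect the main (though modest) obstacle to be exactly this last identification — getting the normalization of the $L$-factor argument, the $\rho$-shifts hidden in the $\widehat{\phantom{y}}$ and $[\,\cdot\,]$ operations, and the half-integral exponents to line up precisely — rather than any conceptual difficulty, since the structural input (Lemma \ref{L:crucial}, the two propositions, and Lemma \ref{L:O-0}) already does all the real work.
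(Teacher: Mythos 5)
Your proposal is correct and follows the paper's own proof essentially verbatim: the reduction via Lemma \ref{L:O-0}, the restriction to $\w\in W_{\GL_r}$ via Lemma \ref{L:crucial}, the reparametrization $Y_{\GL_r,\w}=y_\w+Y_{\GL_r}^\dag$, the factorization through Proposition \ref{P:Wh-Th} (with Lemma \ref{L:split} handling the section), and the multiplication of Propositions \ref{P:sumW} and \ref{P:sum-dag} with the telescoping cancellation are exactly the paper's argument. Two small bookkeeping corrections in your last step: $\tau(\sigma_1)$ must be taken as the extension of (\ref{F:tau-cyc}) to $j=1$ and is not $\tau(\id)$ (which is $0$ by the definition of $\tau$); and since the paper's character $\nu$ sends $(n_\alpha e)(a)$ to $|a|^{\nu}$, the twist $\overline{\nu_R}$ contributes $q^{-\overline{\nu_R}}$ (not $q^{+\overline{\nu_R}}$) to each Euler factor, and with that sign your exponent matching to $\tau(\sigma_j)$ under $j=r+1-i$ is indeed exact.
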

\begin{proof}
By Proposition \ref{P:sumW} and \ref{P:sum-dag}, we have
\begin{align*}
& \msc{Z}(s, \overline{\mca{W}_{\mca{O}_0}^{\GL_R}} \times \mca{W}_{\mca{O}_0}^{\GL_r}) \\
=& \overline{\mca{W}_{\mca{O}_0}^{\GL_R}}(1) \cdot \mca{W}_{\mca{O}_0}^{\GL_r}(1) \cdot \prod_{j=1}^r \zeta\left(\tau(\sigma_j)   \right) \\
=& \overline{\mca{W}_{\mca{O}_0}^{\GL_R}}(1) \cdot \mca{W}_{\mca{O}_0}^{\GL_r}(1) \cdot \prod_{j=1}^r \zeta \left( n_\alpha s-\frac{R+r}{2}+ j+ \nu_r +\overline{\nu_R}  \right) \\
=& \overline{\mca{W}_{\mca{O}_0}^{\GL_R}}(1) \cdot \mca{W}_{\mca{O}_0}^{\GL_r}(1) \cdot \prod_{i=1}^r \zeta \left( n_\alpha s-\frac{R-r}{2}+ 1-i+ \nu_r +\overline{\nu_R}  \right) \\
=& \overline{\mca{W}_{\mca{O}_0}^{\GL_R}}(1) \cdot \mca{W}_{\mca{O}_0}^{\GL_r}(1) \cdot \prod_{i=1}^r \zeta \left( n_\alpha s-\frac{R-1}{2}+ \left( \frac{r+1}{2} -i + \nu_r \right) +\overline{\nu_R}  \right) \\
=& \overline{\mca{W}_{\mca{O}_0}^{\GL_R}}(1) \cdot \mca{W}_{\mca{O}_0}^{\GL_r}(1) \cdot L(n_\alpha s -\frac{R-1}{2}, \mu_{n_\alpha}[\nu_r] \times \overline{\nu_R}).
\end{align*}
The proof is completed.
\end{proof}

%%%
\section{The rank two case}
In this section, let $(\wt{\GL}_2, \wt{\GL}_R)$ be a fundamental pair of $n$-fold covers satisfying $n|(\p \cdot n_\alpha)$, i.e., $r=2$. Let $\pi$ be any generic unramified representation of the $\wt{\GL}_2$, and $\Theta(\wt{\GL}_R, \chi)$ the theta representation associated to an exceptional character $\chi$. Let $\chi_{n_\alpha}[\nu]$ be the linear character associated to $\chi$, where $\nu\in \C$. We show in this section that the Bump-Hoffstein conjecture holds for $\pi$. To avoid confusion, we may continue to use the notation $\wt{T}_r, \wt{A}_r$ etc, with $r=2$ understood.

Note that $I(\mu) \onto \pi$ where $\mu$ is an unramified character  of $Z(\wt{T}_r)$. A Whittaker model for $\pi$ is just $\mca{W}^{\GL_2}_\cc$ for some $\cc \in \Ftn(i(\mu))$. Let $\mu_{n_\alpha}$ be the linear character in (\ref{F:lin}). Therefore, to show the Bump-Hoffstein conjecture in this case, it suffices to consider
$$\msc{Z}(s, \overline{\mca{W}^{\GL_R}_{\mca{O}_0}} \times \mca{W}^{\GL_2}_\gamma),$$
where $\mca{W}^{\GL_2}_\gamma$ is the Whittaker model of $I(\mu)$ associated with $\gamma \in \wt{T}_r$.
\vskip 10pt

Let $W_{\GL_2}=\set{\text{id}, \w_1}$ be the Weyl group of $\GL_2$ with $\w_1:=\w_{\alpha_1}$. In this section, we write
$$\mu_{\alpha}:=\mu(\wt{h}_{\alpha_1}(\varpi^{n_\alpha})).$$
By Proposition \ref{P:Pat}, for any $\gamma \in \wt{T}_r$ and dominant $\wt{t}$,
\begin{align} \label{F:Wh2}
& \mca{W}_\gamma^{\GL_2}( \wt{t} ) \cdot \delta_{B_2}^{-1/2}( \wt{t} )\\
 = & \gk(\w_1, \mu) \cdot \tau(\text{id}, \mu, \gamma, w_1 \wt{t} w_1^{-1}) + \gk(\text{id}, \mu) \cdot \tau(\w_1, {}^{\w_1}\mu, \gamma, w_1 \wt{t} w_1^{-1}) \nonumber \\
=& \frac{1-q^{-1}\mu_{\alpha}}{1-\mu_{\alpha}} \cdot  \tau(\text{id}, \mu, \gamma, w_1 \wt{t} w_1^{-1}) + \tau(\w_1, {}^{\w_1}\mu, \gamma, w_1 \wt{t} w_1^{-1}) \nonumber .
\end{align}

We have a reduction arising from the following result.
\begin{lm}
If $\gamma \notin (\s_0 \cdot \wt{A}_r)\cup (\s_{\w_1[0]} \cdot \wt{A}_r)$, then $\mca{W}_\gamma^{\GL_2}(1)=0$ and 
\begin{equation} \label{F:2-red}
\overline{\mca{W}^{\GL_R}_{\mca{O}_0}}(\phi(\wt{t})) \cdot \mca{W}^{\GL_2}_\gamma(\wt{t})=0
\end{equation}
for any $\wt{t}\in \wt{T}_r$.
\end{lm}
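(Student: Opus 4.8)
The plan is to prove both assertions at once by showing that, whenever $\gamma\notin(\s_0\cdot\wt A_r)\cup(\s_{\w_1[0]}\cdot\wt A_r)$, the functions $\wt t\mapsto\mca W^{\GL_2}_\gamma(\wt t)$ and $\wt t\mapsto\mca W^{\GL_R}_{\mca O_0}(\phi(\wt t))$ on $\wt T_r$ have disjoint supports; taking $\wt t=1$ then yields $\mca W^{\GL_2}_\gamma(1)=0$. First I would reduce to $\wt t=\s_y$ with $y=\sum_i y_i e_i\in Y_{\GL_2}$: by Proposition \ref{P:Pat} both functions vanish unless $\wt t$ is dominant, and every element of $\wt T_r$ can be written $\zeta\cdot\s_y\cdot k$ with $\zeta\in\bbmu_n$, $y$ the valuation vector of the image in $T$, and $k$ the image of an element of $\mbf T(O_F)$ under the fixed splitting; since the unramified vectors are fixed by $\wt K$, the characters $\mu,\chi$ are genuine, and $\phi(\s_y)=\s_{\phi(y)}$ by (\textsf{FP1}), the values of the two functions at $\wt t$ equal $\iota(\zeta)$ times their values at $\s_y$. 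Hence it suffices to show $\overline{\mca W^{\GL_R}_{\mca O_0}}(\s_{\phi(y)})\cdot\mca W^{\GL_2}_\gamma(\s_y)=0$ for all $y$, where we may assume $\s_y$ dominant and $\mca W^{\GL_R}_{\mca O_0}(\s_{\phi(y)})\neq0$, for otherwise the product is visibly zero.

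The second step pins down $y$ from the $\wt\GL_R$-side. By Proposition \ref{P:Wh-Th} applied to $\wt\GL_R$, non-vanishing of $\mca W^{\GL_R}_{\mca O_0}(\s_{\phi(y)})$ forces $\s_{\phi(y)}$ dominant in $\wt\GL_R$ and $\widehat{\phi(y)}\in\widehat{\w}[0]+Y_{\GL_R,Q,n}$ for some $\w\in W_{\GL_R}$, i.e.\ $y\in Y_{\GL_2,\w}$ in the notation of (\ref{dWh}). By Lemma \ref{L:crucial} such a $\w$ lies in $W_{\GL_2}$, so $\w\in\{\id,\w_1\}$; then Lemma \ref{L:02}(i)$\Leftrightarrow$(ii), together with $n_\alpha\cdot Y_{\GL_2}\subseteq Y_{\GL_2,Q,n}$ from (\ref{F:nY}), gives
$$y\in Y_{\GL_2,Q,n}\qquad\text{or}\qquad y\in -\w_1[0]+Y_{\GL_2,Q,n}.$$
Since $\w_1[0]=\alpha_1^\vee$ and $Y_{\GL_2,Q,n}$ is $W_{\GL_2}$-stable, in either case the pair $\{\w_1(y),\,y+\w_1[0]\}$ is congruent, as a set, to $\{0,\,\w_1[0]\}$ modulo $Y_{\GL_2,Q,n}$.

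The last step reads off the support of $\mca W^{\GL_2}_\gamma(\s_y)$ from (\ref{F:Wh2}). With $w_1\s_y w_1^{-1}=\s_{\w_1(y)}$ (Lemma \ref{L:2n}) and Theorem \ref{T:SLCM}: the summand $\tau(\id,\mu,\gamma,\s_{\w_1(y)})$ and the part $\tau^1$ of $\tau(\w_1,{}^{\w_1}\mu,\gamma,\s_{\w_1(y)})$ vanish unless $\gamma\in\s_{\w_1(y)}\cdot\wt A_r$, while the part $\tau^2$ vanishes unless $\gamma\in\s_{\w_1[\w_1(y)]}\cdot\wt A_r$; and $\w_1[\w_1(y)]=\w_1(\w_1(y)-\rho_{\GL_2})+\rho_{\GL_2}=y+\alpha_1^\vee=y+\w_1[0]$. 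Therefore $\mca W^{\GL_2}_\gamma(\s_y)=0$ unless $\gamma$ lies in $\s_{\w_1(y)}\cdot\wt A_r\cup\s_{y+\w_1[0]}\cdot\wt A_r$, which by the second step equals $(\s_0\cdot\wt A_r)\cup(\s_{\w_1[0]}\cdot\wt A_r)$ --- contrary to the hypothesis on $\gamma$. This gives (\ref{F:2-red}); running the same computation with $y=0$ (where $\w_1(0)=0$, $\w_1[\w_1(0)]=\w_1[0]$, and no input from $\wt\GL_R$ is needed) gives $\mca W^{\GL_2}_\gamma(1)=0$.

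The main obstacle is the coupling between the second and third steps. The support condition from (\ref{F:Wh2}) alone would permit $\gamma$ in cosets such as $\s_{2\alpha_1^\vee}\cdot\wt A_r$, which is distinct from the two admissible ones precisely because $n_\alpha=R>2$ forces $\alpha_1^\vee,2\alpha_1^\vee\notin Y_{\GL_2,Q,n}$; it is only the simultaneous requirement $\mca W^{\GL_R}_{\mca O_0}(\s_{\phi(y)})\neq0$ --- which, through Lemma \ref{L:crucial}, confines $y$ to the residue classes $0$ and $-\w_1[0]$ modulo $Y_{\GL_2,Q,n}$ --- that collapses $\{\w_1(y),\,y+\w_1[0]\}$ onto $\{0,\,\w_1[0]\}$. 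Carrying out this reduction cleanly, and the routine but slightly fiddly passage from a general $\wt t$ to $\s_y$, are the only real points; the rest is bookkeeping with the transformation law in Theorem \ref{T:SLCM}.
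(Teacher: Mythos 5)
Your proof is correct and follows essentially the paper's route: the vanishing of $\mca{W}^{\GL_2}_\gamma(1)$ is exactly the paper's direct application of Theorem \ref{T:SLCM} to (\ref{F:Wh2}) at $\wt{t}=1$, and your disjoint-support argument for (\ref{F:2-red}) --- constraining $y$ on the $\wt{\GL}_R$ side via Proposition \ref{P:Wh-Th}, Lemma \ref{L:crucial} and Lemma \ref{L:02}, then reading off the support of $\mca{W}^{\GL_2}_\gamma$ from Theorem \ref{T:SLCM} --- is precisely the ``similar to Lemma \ref{L:O-0}'' argument whose details the paper omits. The only cosmetic point is that in your reduction to $\wt{t}=\s_y$ the conjugated function $\overline{\mca{W}^{\GL_R}_{\mca{O}_0}}$ picks up $\overline{\iota(\zeta)}$ rather than $\iota(\zeta)$, which of course does not affect the vanishing conclusion.
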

\begin{proof}
One has that
$$\mca{W}^{\GL_2}_\gamma(1)=\frac{1-q^{-1}\mu_{\alpha}}{1-\mu_{\alpha}} \cdot  \tau(\text{id}, \mu, \gamma, \s_0) + \tau(\w_1, {}^{\w_1}\mu, \gamma, \s_0).$$
If $\gamma \notin (\s_0 \cdot \wt{A}_r)\cup (\s_{\w_1[0]} \cdot \wt{A}_r)$, then it follows from Theorem \ref{T:SLCM} that
$$\tau(\text{id}, \mu, \gamma, \s_0)=0 \text{ and } \tau(\w_1, {}^{\w_1}\mu, \gamma, \s_0)=0.$$
Therefore  $\mca{W}^{\GL_2}_\gamma(1)=0$ in this case. The proof for (\ref{F:2-red}) is similar to that in Lemma \ref{L:O-0} and we omit the details.
\end{proof}

It follows immediately that it suffices to consider the case where $\gamma \in \s_0 \cdot \wt{A}_r$ or $\gamma \in \s_{\w_1[0]} \cdot \wt{A}_r$. However, for any $\wt{a}\in \wt{A}$, we have
$$\mca{W}^{\GL_2}_{\gamma \cdot \wt{a}} =\mu(\wt{a})^{-1} \cdot \mca{W}^{\GL_2}_\gamma.$$
Therefore it suffices to consider the cases where
$$\gamma=\s_{0} \text{ or } \s_{\w_1[0]}.$$
For $\gamma=\s_{0} \text{ or } \s_{\w_1[0]}$, by (\ref{F:Wh2}), we have
$$\overline{ \mca{W}_{\mca{O}_0}^{\GL_R} }(\s_{\phi(y)}) \cdot \mca{W}_\gamma^{\GL_2}(\s_y)=0$$
unless that $y$ belongs to
$$Y_{\GL_2, \w}=\set{ y=y_1e_1 + y_2e_2:  y_1\ge y_2\ge 0, \quad \widehat{\phi(y)} \in \widehat{\phi(\w)}[0] + Y_{\GL_R, Q,n} }.$$
More explicitly,
$$Y_{\GL_2,\text{id}}=\set{ n_\alpha(k_1e_1 + k_2 e_2): k_1 \ge k_2 \ge 0  }$$
and
$$Y_{\GL_2, \w_1}=\set{ (n_\alpha k_1-1)e_1 + (n_\alpha k_2 +1)e_2: k_1 \ge k_2 +1, k_2 \ge 0  }.$$

\vskip 5pt
Now we discuss the two cases $\gamma = \s_0=1$ or $\gamma=\s_{\w_1[0]}$ separately, and the result follows from explicit computations.
\subsection{Case $\gamma=\s_0$} 
First, we consider an element $y=n_\alpha k_1 e_1 + n_\alpha k_2 e_2$ in $Y_{\GL_2, \text{id}}$. From (\ref{F:Wh2}), we have
\begin{align*}
& \mca{W}^{\GL_2}_{\s_0}(\s_{y}) \cdot \delta_{B_2}^{-1/2}(\s_{y})\\
=& \frac{1-q^{-1}\mu_{\alpha}}{1-\mu_{\alpha}} \cdot  \tau(\text{id}, \mu, \s_0, w_1 \cdot \s_y \cdot w_1^{-1}) + \tau(\w_1, {}^{\w_1}\mu, \s_0, w_1\cdot \s_y \cdot w_1^{-1})  \\
=& \frac{1-q^{-1}\mu_{\alpha}}{1-\mu_{\alpha}} \cdot  \tau(\text{id}, \mu, \s_0 , \s_{\w_1(y)}) + \tau(\w_1, {}^{\w_1}\mu, \s_0, \s_{\w_1(y)})  \text{ by Lemma \ref{L:2n}} \\
=& \frac{1-q^{-1}\mu_\alpha}{1-\mu_\alpha} \cdot \mu(\s_{n_\alpha e_1})^{k_2} \cdot \mu(\s_{n_\alpha e_2})^{k_1} + \frac{1-q^{-1}}{1-\mu_\alpha^{-1}} \cdot \mu(\s_{n_\alpha e_1})^{k_1} \cdot \mu(\s_{n_\alpha e_2})^{k_2},
\end{align*}
where the last equality follows from Theorem \ref{T:SLCM} and the fact that $y\in n_\alpha \cdot Y_{\GL_2} \subseteq Y_{\GL_2,Q,n}$. In particular, 
$$\mca{W}_{\s_0}^{\GL_2}(1)=1.$$

On the other hand,
$$\tau(\text{id}, \mu, \s_0, \s_{\w_1[0]})=0$$
and 
$$\tau(\w_1, {}^{\w_1}\mu, \s_0, \s_{\w_1[0]})=\g(\angb{\w_1[0]_\rho}{\alpha_1} Q(\alpha^\vee)) =\g(Q(\alpha^\vee)).$$
It follows that for any element $z:=(n_\alpha k_1-1)e_1 + (n_\alpha k_2 +1)e_2=-\w_1[0] + n_\alpha k_1 e_1 + n_\alpha k_2 e_2$ in $Y_{\GL_2,\w_1}$, one has
\begin{align*}
& \mca{W}_{\s_0}^{\GL_2}(\s_{z})\cdot \delta_{B_2}^{-1/2}(\s_{z})\\
= & \tau(\w_1, {}^{\w_1}\mu, \s_0, \s_{\widehat{-\w_1[0]}} )\cdot {}^{\w_1}\mu( \s_{n_\alpha k_2e_1 + n_\alpha k_1 e_2} ) \\
=& \tau(\w_1, {}^{\w_1}\mu, \s_0, \s_{\w_1[0]} )\cdot \mu( \s_{n_\alpha k_1e_1} ) \cdot \mu( \s_{n_\alpha k_2 e_2} )  \\
=& \g(Q(\alpha^\vee)) \cdot \mu(\s_{n_\alpha e_1})^{k_1} \cdot \mu(\s_{n_\alpha e_2})^{k_2}.
\end{align*}
As in (\ref{F:BH}), we first compute
\begin{align} \label{F:01}
& \sum_{y\in Y_{\GL_2,\text{id}}} \mca{W}_{\s_0}^{\GL_2}(\s_y) \cdot \overline{\mca{W}_{\mca{O}_0}^{\GL_R} } (\s_{\phi(y)}) \cdot |\det(\s_y)|^s \cdot \delta_{B_2}^{-1/2}(\s_y) \cdot \delta_{B_R}^{-1/2}(\s_{\phi(y)}) \\
=&  \sum_{ \substack{y=n_\alpha k_1 e_1 +n_\alpha k_2 e_2 \\ k_1\ge k_2 \ge 0 } } \mca{W}_{\s_0}^{\GL_2}(\s_y)\cdot \overline{\mca{W}_{\mca{O}_0}^{\GL_R} } (\s_{\phi(y)}) \cdot |\det(\s_y)|^s \cdot \delta_{B_2}^{-1/2}(\s_y) \cdot \delta_{B_R}^{-1/2}(\s_{\phi(y)}) \nonumber \\
=& \mca{W}_{\s_0}^{\GL_2}(1) \cdot \overline{\mca{W}_{\mca{O}_0}^{\GL_R} }(1)  \cdot  \overline{\chi}(\s_{n_\alpha e_{R-1}})^{k_2} \cdot  \overline{\chi}(\s_{n_\alpha e_R})^{k_1} \cdot q^{-n_\alpha s k_1- n_\alpha s k_2} \nonumber \\
& \quad \cdot \sum_{k_1\ge k_2 \ge 0 } \left( \frac{1-q^{-1}\mu_\alpha}{1-\mu_\alpha} \cdot \mu(\s_{n_\alpha e_1})^{k_2} \cdot \mu(\s_{n_\alpha e_2})^{k_1} + \frac{1-q^{-1}}{1-\mu_\alpha^{-1}} \cdot \mu(\s_{n_\alpha e_1})^{k_1} \cdot \mu(\s_{n_\alpha e_2})^{k_2} \right),  \nonumber 
\end{align}
where
$$\chi(\s_{n_\alpha e_{R-1}})=q^{-(\frac{R+1}{2}-(R-1) +\nu)} \text{ and } \chi(\s_{n_\alpha e_{R}})=q^{-(\frac{R+1}{2}-R +\nu)}.$$
\vskip 5pt

Write $\mu_{e_i}:=\mu_{\s_{n_\alpha e_i}}$ and similarly $\chi_{e_i}:=\chi_{\s_{n_\alpha e_i}}$. We see that (\ref{F:01}) is equal to  
$$\mca{W}_{\s_0}^{\GL_2}(1) \cdot \overline{\mca{W}_{\mca{O}_0}^{\GL_R} }(1)\cdot \left( \frac{1-q^{-1}\mu_\alpha}{1-\mu_\alpha} \cdot S_2 -  \frac{(1-q^{-1})\mu_\alpha}{1-\mu_\alpha} \cdot S_1\right) $$
where
\begin{align*}
S_2 &= \sum_{k_2\ge 0} \left( \mu_{e_1} q^{-n_\alpha s}\cdot q^{-(\frac{R+1}{2}-(R-1) +\overline{\nu})} \right)^{k_2} \sum_{k_1 \ge k_2}  \left( \mu_{e_2} q^{-n_\alpha s}\cdot q^{-(\frac{R+1}{2}-R + \overline{\nu})} \right)^{k_1}  \\
&= \frac{1}{(1- \mu_{e_2} q^{-(n_\alpha s + \frac{R+1}{2}-R +\overline{\nu})} ) \cdot (1- \mu_{e_1} \mu_{e_2} q^{-(2n_\alpha s + 2-R + 2\overline{\nu})}  )}
\end{align*}
and
\begin{align*}
S_1 &= \sum_{k_2\ge 0} \left( \mu_{e_2} q^{-n_\alpha s}\cdot q^{-(\frac{R+1}{2}-(R-1) +\overline{\nu})} \right)^{k_2} \sum_{k_1 \ge k_2}  \left( \mu_{e_1} q^{-n_\alpha s}\cdot q^{-(\frac{R+1}{2}- R + \overline{\nu})} \right)^{k_1}\\
&= \frac{1}{(1- \mu_{e_1} q^{-(n_\alpha s + \frac{R+1}{2}- R +\overline{\nu})} ) \cdot (1- \mu_{e_1} \mu_{e_2} q^{-(2n_\alpha s + 2-R + 2\overline{\nu})}  )} .
\end{align*}

Now we consider the sum over $Y_{\GL_2, \w_1}$ and obtain
\begin{align} \label{F:02}
& \sum_{y\in Y_{\GL_2, \w_1}} \mca{W}_{\s_0}^{\GL_2}(\s_y) \cdot \overline{\mca{W}_{\mca{O}_0}^{\GL_R} } (\s_{\phi(y)}) \cdot |\det(\s_y)|^s \cdot \delta_{B_2}^{-1/2}(\s_y) \cdot \delta_{B_R}^{-1/2}(\s_{\phi(y)})\\
=& \overline{\mca{W}_{\mca{O}_0}^{\GL_R} }(1) \cdot \sum_{ \substack{k_2\ge 0 \\ k_1\ge k_2 +1 } } \left(\g(Q(\alpha^\vee)) \mu_{e_1}^{k_1} \mu_{e_2}^{k_2} \right)  \cdot \left(q^{-1} \overline{\g(-Q(\alpha^\vee))^{-1}} \right) \cdot \overline{\chi}_{e_{R-1}}^{k_2} \overline{\chi}_{e_R}^{k_1} \cdot q^{-n_\alpha s k_1 - n_\alpha s k_2} \nonumber \\
=&\overline{\mca{W}_{\mca{O}_0}^{\GL_R} }(1) \cdot  q^{-1} \cdot   \sum_{k_2\ge 0} \left( \mu_{e_2} q^{-n_\alpha s}\cdot q^{-(\frac{R+1}{2}-(r-1) +\overline{\nu})} \right)^{k_2} \sum_{k_1 \ge k_2 +1}  \left( \mu_{e_1} q^{-n_\alpha s}\cdot q^{-(\frac{R+1}{2}- R + \overline{\nu})} \right)^{k_1}  \nonumber \\
=& \overline{\mca{W}_{\mca{O}_0}^{\GL_R} }(1) \cdot \mu_{e_1} \cdot q^{-(n_\alpha s + \frac{R+1}{2} + 1- R + \overline{\nu})} \cdot S_1 \nonumber  \\
=& \mu_{e_1} \cdot q^{-(n_\alpha s + \frac{R+1}{2} + 1- R + \overline{\nu})} \cdot S_1 \cdot  \mca{W}_{\s_0}^{\GL_2}(1) \cdot  \overline{\mca{W}_{\mca{O}_0}^{\GL_R} }(1). \nonumber
\end{align}
Combining (\ref{F:01}) and (\ref{F:02}) above, we get
\begin{align*}
& \msc{Z}(s, \mca{W}_{\s_0}^{\GL_2} \times \overline{\mca{W}_{\mca{O}_0}^{\GL_R} } ) \\
=& \sum_{y\in Y_{\GL_2, \text{id}} \cup Y_{\GL_2, \w_1}} \mca{W}_{\s_0}^{\GL_2}(\s_y) \cdot \overline{\mca{W}_{\mca{O}_0}^{\GL_R} } (\s_{\phi(y)}) \cdot |\det(\s_y)|^s \cdot \delta_{B_2}^{-1/2}(\s_y) \cdot \delta_{B_R}^{-1/2}(\s_{\phi(y)}) \\
=& \frac{1}{(1- \mu_{e_1} q^{-(n_\alpha s + \frac{R+1}{2}-R +\overline{\nu})} )} \cdot \frac{1}{(1- \mu_{e_2} q^{-(n_\alpha s + \frac{R+1}{2}-R +\overline{\nu})} )} \cdot  \mca{W}_{\s_0}^{\GL_2}(1) \cdot  \overline{\mca{W}_{\mca{O}_0}^{\GL_R} }(1)\\
=& L\left(n_\alpha s -\frac{R-1}{2}, \mu_{n_\alpha} \times \overline{\nu} \right) \cdot \mca{W}_{\s_0}^{\GL_2}(1) \cdot  \overline{\mca{W}_{\mca{O}_0}^{\GL_R} }(1).
\end{align*}

We summarize as follows.
\begin{prop} With notations above, we have
$$\msc{Z}(s, \mca{W}_{\s_0}^{\GL_2} \times \overline{\mca{W}_{\mca{O}_0}^{\GL_R} } )=L(n_\alpha s -\frac{R-1}{2}, \mu_{n_\alpha} \times \overline{\nu}) \cdot \mca{W}_{\s_0}^{\GL_2}(1) \cdot  \overline{\mca{W}_{\mca{O}_0}^{\GL_R} }(1).$$
\end{prop}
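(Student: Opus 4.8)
The plan is to evaluate the Rankin--Selberg integral directly via the Iwasawa decomposition, exactly along the lines of (\ref{F:BH}): one rewrites $\msc{Z}(s,\mca{W}_{\s_0}^{\GL_2}\times\overline{\mca{W}_{\mca{O}_0}^{\GL_R}})$ as a sum over $y\in Y_{\GL_2}$ of $\overline{\mca{W}_{\mca{O}_0}^{\GL_R}}(\s_{\phi(y)})\cdot\mca{W}_{\s_0}^{\GL_2}(\s_y)$, weighted by $|\det(\s_y)|^s$ and the two half-modulus characters. First I would cut down the index set: by Proposition \ref{P:Pat} only dominant $\s_y$ contribute, and the support description of $\mca{W}_{\mca{O}_0}^{\GL_R}$ together with Lemma \ref{L:crucial} forces the surviving $y$ into $Y_{\GL_2,\text{id}}\cup Y_{\GL_2,\w_1}$, with the explicit parametrizations recorded just before the statement. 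So $\msc{Z}$ breaks into a sum over $Y_{\GL_2,\text{id}}$ and one over $Y_{\GL_2,\w_1}$.

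Next I would evaluate the three factors on each piece. The values $\overline{\mca{W}_{\mca{O}_0}^{\GL_R}}(\s_{\phi(y)})$ are given by Proposition \ref{P:Wh-Th}, as monomials in $\overline{\chi}(\s_{n_\alpha e_{R-1}})$ and $\overline{\chi}(\s_{n_\alpha e_R})$ times $\overline{\mca{W}_{\mca{O}_0}^{\GL_R}}(1)$; the values $\mca{W}_{\s_0}^{\GL_2}(\s_y)$ are given by the rank-one formula (\ref{F:Wh2}), in which $\gk(\w_1,\mu)=(1-q^{-1}\mu_\alpha)/(1-\mu_\alpha)$, the two local-coefficient entries are read off from Theorem \ref{T:SLCM}, and Lemma \ref{L:2n} is used to conjugate $\s_y$ by $w_1$. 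On $Y_{\GL_2,\text{id}}$, where $y\in n_\alpha Y_{\GL_2}\subseteq Y_{\GL_2,Q,n}$, one obtains a clean two-term expression with no Gauss sum; on $Y_{\GL_2,\w_1}$ only the $\tau^2$-term survives, and its Gauss sum cancels against the conjugate Gauss sum in $\overline{\mca{W}_{\mca{O}_0}^{\GL_R}}$ via (\ref{F:Gneg}), leaving only the factor $q^{-1}$ coming from $\mathbf{t}_{\GL_R}(\phi(\w_1),0)$. Carrying out the nested geometric summations, using Lemma \ref{L:split} to write $\mu_\alpha=\mu_{e_1}\mu_{e_2}^{-1}$ and to split $\mu$ multiplicatively over $n_\alpha Y_{\GL_2}$, turns the $Y_{\GL_2,\text{id}}$-part into $\tfrac{1-q^{-1}\mu_\alpha}{1-\mu_\alpha}S_2-\tfrac{(1-q^{-1})\mu_\alpha}{1-\mu_\alpha}S_1$ and the $Y_{\GL_2,\w_1}$-part into $q^{-1}\mu_{e_1}A\cdot S_1$, where $A=q^{-(n_\alpha s+\frac{R+1}{2}-R+\overline{\nu})}$ and $S_1,S_2$ are the two double geometric series, both carrying the common denominator factor $1-\mu_{e_1}\mu_{e_2}A^2q^{-1}$.

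Finally I would add the three contributions over the common denominator $(1-\mu_\alpha)(1-\mu_{e_1}A)(1-\mu_{e_2}A)(1-\mu_{e_1}\mu_{e_2}A^2q^{-1})$ and check that the resulting numerator factors as $(1-\mu_\alpha)(1-\mu_{e_1}\mu_{e_2}A^2q^{-1})$; cancelling those two factors leaves $[(1-\mu_{e_1}A)(1-\mu_{e_2}A)]^{-1}\cdot\mca{W}_{\s_0}^{\GL_2}(1)\cdot\overline{\mca{W}_{\mca{O}_0}^{\GL_R}}(1)$, and since $\mu_{e_i}A$ is the $i$-th Satake parameter of $\mu_{n_\alpha}\times\overline{\nu}$ twisted by $q^{-(n_\alpha s-\frac{R-1}{2})}$, this equals $L(n_\alpha s-\frac{R-1}{2},\mu_{n_\alpha}\times\overline{\nu})\cdot\mca{W}_{\s_0}^{\GL_2}(1)\cdot\overline{\mca{W}_{\mca{O}_0}^{\GL_R}}(1)$; one also records $\mca{W}_{\s_0}^{\GL_2}(1)=1$. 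The main obstacle is precisely this last algebraic identity: one must verify simultaneously that the spurious pole at $\mu_\alpha=1$ present in both summands of (\ref{F:Wh2}) cancels once the chambers $Y_{\GL_2,\text{id}}$ and $Y_{\GL_2,\w_1}$ are combined, and that the ``doubling'' denominator $1-\mu_{e_1}\mu_{e_2}A^2q^{-1}$ common to $S_1$ and $S_2$ is killed by the very same numerator. This is the rank-two shadow of the functional-equation mechanism that in the theta-vs-theta setting of \S 4 made the non-identity Weyl contributions telescope; here it reduces to a one-line identity that I would check by clearing denominators.
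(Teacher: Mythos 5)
Your proposal is correct and follows essentially the same route as the paper: the same reduction to the two chambers $Y_{\GL_2,\text{id}}$ and $Y_{\GL_2,\w_1}$ via Proposition \ref{P:Pat} and the support of $\mca{W}_{\mca{O}_0}^{\GL_R}$, the same evaluation of (\ref{F:Wh2}) through Theorem \ref{T:SLCM} and Lemma \ref{L:2n}, the same Gauss-sum cancellation leaving the factor $q^{-1}$ from $\mathbf{t}_{\GL_R}(\widehat{\phi(\w_1)},0)$, and the same geometric series $S_1,S_2$. Your final rational-function identity (numerator $(1-\mu_\alpha)(1-\mu_{e_1}\mu_{e_2}A^2q^{-1})$, leaving $[(1-\mu_{e_1}A)(1-\mu_{e_2}A)]^{-1}$) is exactly the algebra the paper performs implicitly when combining (\ref{F:01}) and (\ref{F:02}), and it checks out.
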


\vskip 5pt
\subsection{Case $\gamma=\s_{\w_1[0]}$}
We keep the notation $\mu_{e_i}:=\mu(\s_{n_\alpha e_i})$ for $i=1, 2$. In this case, for $y=n_\alpha k_1 e_1 + n_\alpha k_2 e_2$, it follows from (\ref{F:Wh2}) that
\begin{align*}
& \mca{W}^{\GL_2}_{\s_{\w_1[0]}}(\s_{})  \cdot \delta_{B_2}^{-1/2}(\s_{y})\\
= & \frac{1-q^{-1}\mu_\alpha}{1-\mu_\alpha} \cdot \tau(\text{id}, \mu, \s_{\w_1[0]}, \s_0) \cdot 
(\mu_{e_1})^{k_2} \cdot (\mu_{e_2})^{k_1} + \tau(\w_1, {}^{\w_1} \mu, \s_{\w_1[0]}, \s_0) (\mu_{e_1})^{k_1} \cdot (\mu_{e_2})^{k_2} \\
= & \g(-Q(\alpha^\vee)) \cdot (\mu_{e_1})^{k_1} \cdot (\mu_{e_2})^{k_2}.
\end{align*}
In particular, 
$$\mca{W}^{\GL_2}_{\s_{\w_1[0]}}(1)=\g(-Q(\alpha^\vee)).$$

On the other hand, consider $z=-\w_1[0]+n_\alpha k_1e_1 + n_\alpha k_2 e_2$. It follows from the two equalities
$\tau(\text{id}, \mu, \s_{\w_1[0]}, \s_{\w_1[0]})=1$
and 
$$\tau(\w_1, {}^{\w_1}\mu, \s_{\w_1[0]}, \s_{\w_1[0]})=\frac{(1-q^{-1})\cdot ({}^{\w_1}\mu)_\alpha}{1-({}^{\w_1}\mu)_\alpha}$$
that
\begin{align*}
& \mca{W}_{\s_{\w_1[0]}}^{\GL_2}(\s_{z}) \cdot \delta_{B_2}^{-1/2}(\s_{z})\\
= & \frac{1-q^{-1}\mu_\alpha}{1-\mu_\alpha} \cdot \mu_{e_1}^{k_2} \cdot \mu_{e_2}^{k_1} -  \frac{1-q^{-1}}{1-\mu_\alpha} \cdot \mu_{e_1}^{k_1} \cdot \mu_{e_2}^{k_2}.
\end{align*}
In particular, 
$$\mca{W}_{\s_{\w_1[0]}}^{\GL_2}(\s_{-\w_1[0]})=q^{-1}.$$
Therefore we have
\begin{align*}
& \sum_{y\in Y_{\GL_2, \w_1}} \mca{W}_{\s_{\w_1[0]}}^{\GL_2}(\s_y) \cdot \overline{\mca{W}_{\mca{O}_0}^{\GL_R} }(\s_{\phi(y)}) \cdot |\det(\s_y)|^s  \cdot \delta_{B_2}^{-1/2}(\s_y) \cdot \delta_{B_R}^{-1/2}(\s_{\phi(y)})\\
=& \overline{\mca{W}_{\mca{O}_0}^{\GL_R} }(1) \cdot  \sum_{ \substack{ k_2\ge 0 \\ k_1\ge k_2 +1} } \mca{W}_{\s_{\w_1[0]}}^{\GL_2}(\s_{-\w_1[0]+n_\alpha k_1e_1 + n_\alpha k_2 e_2}) \cdot \delta_{B_2}^{-1/2}(\s_{-\w_1[0]+n_\alpha k_1e_1 + n_\alpha k_2 e_2})  \\
& \quad \cdot \overline{\cc_{\mca{O}_0}^{\GL_R} } (\s_{\widehat{\phi(\w_1)}[0]+n_\alpha k_1e_R + n_\alpha k_2 e_{R-2}}) \cdot q^{-n_\alpha s k_1 - n_\alpha s k_2} \\
=& \overline{\mca{W}_{\mca{O}_0}^{\GL_R} }(1) \cdot \overline{\cc_{\mca{O}_0}^{\GL_R} } (\s_{\widehat{\phi(\w_1)}[0]})   \cdot \sum_{\substack{k_2\ge 0 \\ k_1\ge k_2 +1} } \left( \frac{1-q^{-1}\mu_\alpha}{1-\mu_\alpha} \cdot \mu_{e_1}^{k_2} \cdot \mu_{e_2}^{k_1} - \frac{1-q^{-1}}{1-\mu_\alpha} \cdot \mu_{e_1}^{k_1} \cdot \mu_{e_2}^{k_2} \right) \\
& \quad \cdot   \overline{\chi}_{e_{R-1}}^{k_2} \cdot  \overline{\chi}_{e_R}^{k_1} \cdot q^{-n_\alpha s k_1- n_\alpha s k_2} \\
=&\overline{\mca{W}_{\mca{O}_0}^{\GL_R} }(1) \cdot q^{-1} \cdot \overline{\g(-Q(\alpha^\vee))}^{-1} \cdot  \sum_{\substack{k_2\ge 0 \\ k_1\ge k_2 +1} } \left( \frac{1-q^{-1}\mu_\alpha}{1-\mu_\alpha} \cdot \mu_{e_1}^{k_2} \cdot \mu_{e_2}^{k_1} - \frac{1-q^{-1}}{1-\mu_\alpha} \cdot \mu_{e_1}^{k_1} \cdot \mu_{e_2}^{k_2} \right)  \\
& \quad \cdot  q^{-k_2(\frac{R+1}{2}-(R-1) +\overline{\nu})} \cdot  q^{-k_1(\frac{R+1}{2}-R +\overline{\nu})} \cdot q^{-n_\alpha s k_1- n_\alpha s k_2} \\
=& \mca{W}_{\s_{\w_1[0]}}^{\GL_2}(1) \cdot \overline{\mca{W}_{\mca{O}_0}^{\GL_R} }(1) 
\cdot \sum_{\substack{k_2\ge 0 \\ k_1\ge k_2} } \left( \frac{1-q^{-1}\mu_\alpha}{1-\mu_\alpha} \cdot \mu_{e_1}^{k_2} \cdot \mu_{e_2}^{k_1} - \frac{(1-q^{-1})\mu_\alpha}{1-\mu_\alpha} \cdot \mu_{e_1}^{k_1} \cdot \mu_{e_2}^{k_2} \right) \\
& \quad \cdot \mu_{e_2} q^{-(n_\alpha s+ \frac{R+1}{2}-R +\overline{\nu})} \cdot \cdot q^{-k_1(n_\alpha s+ \frac{R+1}{2}-R +\overline{\nu})} \cdot q^{-k_2(n_\alpha s+ \frac{R+1}{2}-(R-1) +\overline{\nu})} .
\end{align*}

Meanwhile, we consider the sum over $Y_{\GL_2, \text{id}}$ and obtain
\begin{align*}
& \sum_{y\in Y_{\GL_2, \text{id}}} \mca{W}_{\s_{\w_1[0]}}^{\GL_2}(\s_y) \cdot \overline{\mca{W}_{\mca{O}_0}^{\GL_R} } (\s_{\phi(y)}) \cdot |\det(\s_y)|^s \cdot \delta_{B_2}^{-1/2}(\s_y) \cdot \delta_{B_R}^{-1/2}(\s_{\phi(y)}) \\
=& \overline{\mca{W}_{\mca{O}_0}^{\GL_R} }(1)\cdot \sum_{ \substack{k_2\ge 0 \\ k_1\ge k_2} } \mca{W}_{\s_{\w_1[0]}}^{\GL_2}(\s_{k_1 n_\alpha e_1 + k_2 n_\alpha e_2}) \cdot  \delta_{B_2}^{-1/2}(\s_{k_1 n_\alpha e_1 + k_2 n_\alpha e_2})  \cdot \overline{\chi}_{e_{R-1}}^{k_2} \overline{\chi}_{e_R}^{k_1} \cdot q^{-n_\alpha s k_1 - n_\alpha s k_2} \\
=& \overline{\mca{W}_{\mca{O}_0}^{\GL_R} }(1) \cdot \mca{W}_{\s_{\w_1[0]}}^{\GL_2}(1) \cdot \sum_{ \substack{k_2\ge 0 \\ k_1\ge k_2} }   \mu_{e_1}^{k_1} \cdot \mu_{e_2}^{k_2} \cdot \overline{\chi}_{e_{R-1}}^{k_2} \overline{\chi}_{e_R}^{k_1} \cdot q^{-n_\alpha s k_1 - n_\alpha s k_2}\\
=& \overline{\mca{W}_{\mca{O}_0}^{\GL_R} }(1) \cdot \mca{W}_{\s_{\w_1[0]}}^{\GL_2}(1) \cdot \sum_{ \substack{k_2\ge 0 \\ k_1\ge k_2} }   \mu_{e_1}^{k_1} \cdot \mu_{e_2}^{k_2}  \cdot q^{-k_1(n_\alpha s+ \frac{R+1}{2}-R +\overline{\nu})} \cdot q^{-k_2(n_\alpha s+ \frac{R+1}{2}-(R-1) +\overline{\nu})} .
\end{align*}
A straightforward computation of combining the above sums over $Y_{\GL_2, \w_1}$ and $Y_{\GL_r, \text{id}}$ gives the desired result as follows.
\begin{prop}  With notations as above, we have
$$\msc{Z}(s, \mca{W}_{\s_{\w_1[0]}}^{\GL_2} \times \overline{\mca{W}_{\mca{O}_0}^{\GL_R} } )=L(n_\alpha s -\frac{R-1}{2}, \mu_{n_\alpha} \times \overline{\nu}) \cdot \mca{W}_{\s_{\w_1[0]}}^{\GL_2}(1) \cdot  \overline{\mca{W}_{\mca{O}_0}^{\GL_R} }(1).$$
\end{prop}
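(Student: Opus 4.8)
The plan is to mirror the structure of the computation already carried out for the case $\gamma=\s_0$: decompose the Rankin-Selberg integral as a sum over $Y_{\GL_2, \text{id}} \cup Y_{\GL_2, \w_1}$, evaluate each sum as a product of geometric series, and show that the cross terms cancel so that only the single geometric factor corresponding to the $L$-function survives. The two lattice pieces have already been made explicit, and the values $\mca{W}_{\s_{\w_1[0]}}^{\GL_2}(\s_y)\cdot\delta_{B_2}^{-1/2}(\s_y)$ on each piece have been computed above in terms of $\mu_{e_1}, \mu_{e_2}, \mu_\alpha$ and the Gauss sum $\g(\pm Q(\alpha^\vee))$, together with the normalizations $\mca{W}_{\s_{\w_1[0]}}^{\GL_2}(1)=\g(-Q(\alpha^\vee))$ and $\mca{W}_{\s_{\w_1[0]}}^{\GL_2}(\s_{-\w_1[0]})=q^{-1}$. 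So the first step is simply to record the two partial sums $S_{\text{id}}$ and $S_{\w_1}$ displayed above, each already reduced to a sum over $k_1 \ge k_2 \ge 0$ (after re-indexing $Y_{\GL_2,\w_1}$ by $k_1 \mapsto k_1$, $k_2\mapsto k_2$ with $k_1 \ge k_2+1$, or equivalently $k_1 \ge k_2$).

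The second and main step is the algebraic identity. Writing $x_1 = \mu_{e_1} q^{-(n_\alpha s + \frac{R+1}{2} - R + \overline{\nu})}$ and $x_2 = \mu_{e_2} q^{-(n_\alpha s + \frac{R+1}{2} - (R-1) + \overline{\nu})}$, every inner sum is of the form $\sum_{k_1 \ge k_2 \ge 0} x_1^{k_1} x_2^{k_2}$ or a variant with $e_1, e_2$ swapped, which evaluates to $\frac{1}{(1-x_1)(1-x_1 x_2)}$-type expressions. One then adds the contributions from the two coefficient polynomials $\frac{1-q^{-1}\mu_\alpha}{1-\mu_\alpha}$ and $-\frac{1-q^{-1}}{1-\mu_\alpha}$ (for $Y_{\GL_2,\w_1}$), respectively $\mu_{e_1}^{k_1}\mu_{e_2}^{k_2}$ with no coefficient (for $Y_{\GL_2,\text{id}}$, weighted by $\mca{W}_{\s_{\w_1[0]}}^{\GL_2}(1)$), and checks that the $\mu_\alpha$-dependence and the spurious pole at $1-\mu_\alpha = 0$ cancel, leaving
$$\frac{1}{\bigl(1-\mu_{e_1} q^{-(n_\alpha s + \frac{R+1}{2}-R+\overline{\nu})}\bigr)\bigl(1-\mu_{e_2} q^{-(n_\alpha s + \frac{R+1}{2}-R+\overline{\nu})}\bigr)}$$
times $\mca{W}_{\s_{\w_1[0]}}^{\GL_2}(1)\cdot\overline{\mca{W}_{\mca{O}_0}^{\GL_R}}(1)$, which is exactly $L(n_\alpha s - \frac{R-1}{2}, \mu_{n_\alpha}\times\overline{\nu})\cdot\mca{W}_{\s_{\w_1[0]}}^{\GL_2}(1)\cdot\overline{\mca{W}_{\mca{O}_0}^{\GL_R}}(1)$, using $\mu_{n_\alpha}[\nu_r] = |\cdot|^{\frac{r+1}{2}-i+\nu_r}$ with $r=2$ so that $\mu_{e_1}, \mu_{e_2}$ are the two Satake-type parameters.

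The third step is bookkeeping: one must also observe that by the reduction lemmas ($\mca{W}_\gamma^{\GL_2}(1)=0$ unless $\gamma \in \s_0\wt{A}_r \cup \s_{\w_1[0]}\wt{A}_r$, and disjointness of supports otherwise, proved as in Lemma \ref{L:O-0}), together with the previous proposition handling $\gamma=\s_0$, these two cases suffice to establish Conjecture \ref{C:GBH} for an arbitrary generic unramified $\pi$ of $\wt{\GL}_2$; this is summarized as Theorem \ref{T:rank2}. The main obstacle is purely the combinatorial identity in step two: one has to be careful that the Gauss sum normalization $\overline{\g(-Q(\alpha^\vee))}^{-1}\cdot q^{-1}$ appearing in the $Y_{\GL_2,\w_1}$-sum, together with $\overline{G_\psi(a,b)} = G_\psi(-a,b)$ and $|\g(k)| = q^{-1/2}$ for $n \nmid k$, combine correctly with $\mca{W}_{\s_{\w_1[0]}}^{\GL_2}(1) = \g(-Q(\alpha^\vee))$, so that the normalized Whittaker value at the identity factors out cleanly; and that the re-indexing shift $k_1 \mapsto k_1+1$ on $Y_{\GL_2,\w_1}$ correctly produces the extra factor $\mu_{e_1} q^{-(n_\alpha s + \frac{R+1}{2}+1-R+\overline{\nu})}$ that makes the two sums combine. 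Since the analogous identity in the $\gamma=\s_0$ case has already been verified above, I expect no conceptual difficulty, only care with signs and conjugation.
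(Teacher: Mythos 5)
Your plan is correct and is essentially the paper's own argument: split the zeta sum over $Y_{\GL_2,\text{id}}\cup Y_{\GL_2,\w_1}$, insert the values of $\mca{W}^{\GL_2}_{\s_{\w_1[0]}}$ obtained from (\ref{F:Wh2}) and Theorem \ref{T:SLCM} together with the values of $\overline{\cc^{\GL_R}_{\mca{O}_0}}$ via Proposition \ref{P:Wh-Th}, sum the geometric series, and check that the $(1-\mu_\alpha)$-poles cancel, leaving $\bigl(1-\mu_{e_1}q^{-(n_\alpha s-\frac{R-1}{2}+\overline{\nu})}\bigr)^{-1}\bigl(1-\mu_{e_2}q^{-(n_\alpha s-\frac{R-1}{2}+\overline{\nu})}\bigr)^{-1}$ times $\mca{W}^{\GL_2}_{\s_{\w_1[0]}}(1)\cdot\overline{\mca{W}^{\GL_R}_{\mca{O}_0}}(1)$, exactly as in the paper. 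The only (harmless) slip is in the reindexing bookkeeping: in this case the shift $k_1\mapsto k_1+1$, combined with $q^{-1}\,\overline{\g(-Q(\alpha^\vee))}^{-1}=\g(-Q(\alpha^\vee))$, pulls out the factor $\mu_{e_2}q^{-(n_\alpha s+\frac{R+1}{2}-R+\overline{\nu})}$ (with the second coefficient acquiring a $\mu_\alpha$), rather than the factor $\mu_{e_1}q^{-(n_\alpha s+\frac{R+1}{2}+1-R+\overline{\nu})}$ you carried over from the $\gamma=\s_0$ case; this does not affect the cancellation or the final identity.
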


\subsection{Summary for $(\wt{\GL}_2, \wt{\GL}_R)$}
Now we summarise our result in this section.

\begin{thm} \label{T:rank2}
Let $(\wt{\GL}_2, \wt{\GL}_R)$ be a fundamental pair of $n$-fold covering groups satisfying $n|(\p\cdot n_\alpha)$. Let $\pi$ be a generic unramified representation of $\wt{\GL}_2$ such that $I(\mu) \onto \pi$ for some unramified character $\mu$. Let $\Theta(\wt{\GL}_R, \chi)$ be the distinguished theta representation associated to an unramified exceptional $\chi$. Let $\mca{W}^{\GL_R}_{\mca{O}_0}$ be the unique Whittaker model of $\Theta(\wt{\GL}_R, \chi)$. Then for any Whittaker model $\mca{W}^{\GL_r}$ of $\pi$, the following equality holds:
$$\msc{Z} (s, \mca{W}^{\GL_2} \times \overline{\mca{W}_{\mca{O}_0}^{\GL_R}} )= L(n_\alpha s -\frac{R-1}{2}, \mu_{n_\alpha} \times \overline{\nu}) \cdot \mca{W}^{\GL_2}(1) \cdot \overline{ \mca{W}^{\GL_R}_{\mca{O}_0} }(1).$$
Here $\mu_{n_\alpha}$ is the linear character associated to $\mu$ as in (\ref{F:lin}), and $\chi_{n_\alpha}[\nu]$ the linear character to $\chi$ in Lemma \ref{L:exp-ch}.
\end{thm}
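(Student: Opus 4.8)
The plan is to obtain Theorem~\ref{T:rank2} from the two propositions proved just above (the cases $\gamma=\s_0$ and $\gamma=\s_{\w_1[0]}$) by a short linearity reduction; I also indicate how those two computations go, since they carry the real content. Since $I(\mu)\onto\pi$, every Whittaker model of $\pi$ is of the form $\mca{W}^{\GL_2}_\cc$ for some $\cc\in\Ftn(i(\mu))$, and by (\ref{F:cWh}) it equals $\sum_{\gamma\in\wt{T}_r/\wt{A}_r}\cc(\gamma)\,\mca{W}^{\GL_2}_\gamma$. As $\cc\mapsto\msc{Z}(s,\mca{W}^{\GL_2}_\cc\times\overline{\mca{W}^{\GL_R}_{\mca{O}_0}})$ and $\cc\mapsto\mca{W}^{\GL_2}_\cc(1)$ are both linear in $\cc$, it is enough to establish the identity for each $\mca{W}^{\GL_2}_\gamma$. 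By the vanishing lemma of this section, for $\gamma\notin(\s_0\cdot\wt{A}_r)\cup(\s_{\w_1[0]}\cdot\wt{A}_r)$ both sides are zero (the right because $\mca{W}^{\GL_2}_\gamma(1)=0$, the left by (\ref{F:2-red})); and for the remaining $\gamma$ the transformation law $\mca{W}^{\GL_2}_{\gamma\wt{a}}=\mu(\wt{a})^{-1}\mca{W}^{\GL_2}_\gamma$ for $\wt{a}\in\wt{A}_r$, together with the matching equivariance of $\overline{\mca{W}^{\GL_R}_{\mca{O}_0}}$ under the conjugate central character, reduces us to $\gamma=\s_0$ and $\gamma=\s_{\w_1[0]}$.

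For each of these two cases I would unfold the integral as in (\ref{F:BH}), writing $\msc{Z}$ as a sum over $y\in Y_{\GL_2}$ of $\overline{\mca{W}^{\GL_R}_{\mca{O}_0}}(\s_{\phi(y)})\cdot\mca{W}^{\GL_2}_\gamma(\s_y)$ weighted by $|\det\s_y|^s$ and the two modular characters. The $\wt{\GL}_2$-factor comes from (\ref{F:Wh2}) --- Proposition~\ref{P:Pat} for $\GL_2$ --- which reads off $\mca{W}^{\GL_2}_\gamma(\s_y)$ from the Shahidi local coefficient matrix of Theorem~\ref{T:SLCM} after conjugating by $w_1$ via Lemma~\ref{L:2n}; the relevant entries are supported only on $y$ lying in $Y_{\GL_2,\text{id}}$ or $Y_{\GL_2,\w_1}$, and I would use their explicit descriptions $y=n_\alpha(k_1e_1+k_2e_2)$ with $k_1\ge k_2\ge 0$, and $y=(n_\alpha k_1-1)e_1+(n_\alpha k_2+1)e_2$ with $k_1\ge k_2+1\ge 1$. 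The $\wt{\GL}_R$-factor comes from Proposition~\ref{P:Wh-Th} and Corollary~\ref{C:key1}, which express $\mca{W}^{\GL_R}_{\mca{O}_0}(\s_{\phi(y)})$ through $\overline{\cc^{\GL_R}_{\mca{O}_0}}$ evaluated at $\widehat{\phi(\w)}[0]$, hence through a power of $q$ dictated by the linear character $\chi_{n_\alpha}[\nu]$ on the $n_\alpha e_i$'s. Pulling out the common factor $\mca{W}^{\GL_2}_\gamma(1)\cdot\overline{\mca{W}^{\GL_R}_{\mca{O}_0}}(1)$, each lattice sum over $Y_{\GL_2,\text{id}}$ and $Y_{\GL_2,\w_1}$ collapses to a geometric series in $q^{-n_\alpha s}$.

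The main obstacle is the final rational-function identity. For $\gamma=\s_0$ the $Y_{\GL_2,\text{id}}$-sum produces $\tfrac{1-q^{-1}\mu_\alpha}{1-\mu_\alpha}S_2-\tfrac{(1-q^{-1})\mu_\alpha}{1-\mu_\alpha}S_1$, where $S_1$ and $S_2$ share the denominator $1-\mu_{e_1}\mu_{e_2}q^{-(2n_\alpha s+2-R+2\overline\nu)}$, and the $Y_{\GL_2,\w_1}$-sum adds a further multiple of $S_1$; one must check that all dependence on $\mu_\alpha$ and on that quadratic denominator cancels, leaving exactly $\big(1-\mu_{e_1}q^{-(n_\alpha s+\frac{R+1}{2}-R+\overline\nu)}\big)^{-1}\big(1-\mu_{e_2}q^{-(n_\alpha s+\frac{R+1}{2}-R+\overline\nu)}\big)^{-1}$, which is $L(n_\alpha s-\tfrac{R-1}{2},\mu_{n_\alpha}\times\overline\nu)$. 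The case $\gamma=\s_{\w_1[0]}$ is entirely parallel, but the nonvanishing entries of the local coefficient matrix are now $\g(-Q(\alpha^\vee))$ together with $\tfrac{(1-q^{-1})({}^{\w_1}\mu)_\alpha}{1-({}^{\w_1}\mu)_\alpha}$, so the Gauss-sum bookkeeping differs slightly; the key normalisation is $\g(-Q(\alpha^\vee))\cdot\overline{\g(-Q(\alpha^\vee))}=q^{-1}$, which keeps the extracted factor $\mca{W}^{\GL_2}_{\s_{\w_1[0]}}(1)=\g(-Q(\alpha^\vee))$ honest. Since $r=2$ the analogue of Lemma~\ref{L:crucial} is trivial --- only $\w\in\{\text{id},\w_1\}$ occurs --- so no structural input beyond the \S4 machinery is required, and the whole difficulty is in verifying these two geometric-series identities. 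Combining the two propositions through the reduction of the first paragraph then yields the asserted formula for an arbitrary Whittaker model $\mca{W}^{\GL_2}$ of $\pi$.
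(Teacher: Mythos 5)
Your proposal follows essentially the same route as the paper: reduce by linearity and the vanishing lemma to the two representatives $\gamma=\s_0$ and $\gamma=\s_{\w_1[0]}$, then compute $\msc{Z}$ in each case by unfolding over $Y_{\GL_2,\text{id}}$ and $Y_{\GL_2,\w_1}$ with the local coefficient matrix entries and summing the resulting geometric series, exactly as in the two propositions preceding Theorem~\ref{T:rank2}. The details you flag (the $S_1$, $S_2$ cancellation and the normalisation $\g(-Q(\alpha^\vee))\cdot\overline{\g(-Q(\alpha^\vee))}=q^{-1}$) are precisely the ones the paper verifies, so the argument is correct.
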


%%%%
\vskip 5pt
\section{Some remarks}
%%%
\subsection{Generating function \'a la Bump-Friedberg}
For fundamental pairs of Kazhdan-Patterson coverings, Bump and Friedberg propose in \cite{BF} a general approach of attacking the Bump-Hoffstein conjecture, by considering the generating function of $L(s, \mu_{n_\alpha})$ (i.e. when $\nu_R=\mbf{1}$ in Conjecture \ref{C:GBH}).  We only mention in passing the analogue of the generating function in \cite{BF}, as the formulation is essentially the same.  

Consider a fundamental pair $(\wt{\GL}_r^{(n)}, \wt{\GL}_R^{(n)})$ with $n|(\p\cdot n_\alpha)$. Let $\Theta(\wt{\GL}_R^{(n)}, \chi)$ be the distinguished theta representation associated with $\chi$ such that $\nu_R=\mbf{1}$ in $\chi_{n_\alpha}[\nu_R]$. Let $\mca{W}_{\mca{O}_0}^{\GL_R}$ be the unique Whittaker model of $\Theta(\wt{\GL}_R^{(n)}, \chi)$. Consider the \emph{anti-genuine} function $\wt{\Delta}_s$ on $\wt{\GL}_r^{(n)}$ satisfying:
\begin{enumerate}
\item[$\bullet$] the function $\wt{\Delta}_s: \wt{\GL}_r^{(n)} \to \C$ is $K_r$-biinvariant.
\item[$\bullet$] for $\wt{t}\in \wt{T}_r$, one has $\wt{\Delta}_s(\wt{t})=0$ unless $\wt{t}$ belongs to $\wt{T_{n_\alpha}^\dag}$ and is dominant, in which case
$$\wt{\Delta}_s(\wt{t}) = \val{\det (\wt{t}) }^{\frac{s}{n_\alpha} + \frac{r-1}{n_\alpha}} \cdot \delta_{B_r}(\wt{t})^{\frac{n_\alpha-1}{2n_\alpha}}.$$
\end{enumerate}

Based on this, one has
\begin{conj} [{Bump-Friedberg}]
For any $\wt{g}\in \wt{\GL}_r^{(n)}$, the equality below holds:
$$\int_{U_r} \wt{\Delta}_{n_\alpha s -\frac{R-1}{2}} (u \wt{g}) \cdot \psi(u)\  du =\val{ \det(\wt{g})}^{s-\frac{R-r}{2}} \cdot \overline{ \mca{W}_{\mca{O}_0}^{\GL_R} }(\phi(\wt{g})).$$
\end{conj}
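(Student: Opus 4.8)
\medskip\noindent\emph{Proof strategy.} The plan is to follow the approach of \cite{BF} and \cite{Gin}, now for a fundamental pair: reduce the identity to an evaluation on the dominant torus, turn both sides into explicit sums indexed by $W_{\GL_r}$ (using Lemma \ref{L:crucial}), and compare them term by term. First I would observe that both sides, as functions of $\wt{g}\in\wt{\GL}_r^{(n)}$, are anti-genuine, right $K_r$-invariant --- for the right-hand side because $\phi(K_r)\subseteq K_R$ is compatible with the fixed splittings and $\mca{W}_{\mca{O}_0}^{\GL_R}$ is $K_R$-fixed --- and satisfy $h(u\wt{g})=\overline{\psi(u)}\cdot h(\wt{g})$ for $u\in U_r$; for the right-hand side this uses that the pullback along $\phi$ of the character of $U_R$ agrees with $\psi$ on $U_r$, since $\phi$ carries the pinnings of $\GL_r$ into those of $\GL_R$. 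By the Iwasawa and Cartan decompositions it then suffices to treat $\wt{g}=\s_y$ with $y\in Y_{\GL_r}$; and Proposition \ref{P:Wh-Th} applied to $\wt{\GL}_R$ shows the right-hand side vanishes unless $\s_{\phi(y)}$ is dominant with $\widehat{\phi(y)}\in\mca{O}_0^{\GL_R}+Y_{\GL_R,Q,n}$.

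Next I would make the right-hand side completely explicit. By Proposition \ref{P:Wh-Th} for $\wt{\GL}_R$, for such $y$ one has $\overline{\mca{W}_{\mca{O}_0}^{\GL_R}}(\s_{\phi(y)})=\delta_{B_R}^{1/2}(\s_{\phi(y)})\cdot\overline{\cc_{\mca{O}_0}^{\GL_R}(\s_{\widehat{\phi(y)}})}\cdot\overline{\mca{W}_{\mca{O}_0}^{\GL_R}(1)}$. By Lemma \ref{L:crucial} together with Lemma \ref{L:02}, the contributing $y$ form exactly $\bigsqcup_{\w\in W_{\GL_r}}(y_\w+Y_{\GL_r}^\dag)$ with $y_\w=-\w[0]+z_\w$ and $z_\w$ as in (\ref{F:z-w}); writing $y=y_\w+y^\dag$, the factorisation $\widehat{\phi(y)}=\widehat{\phi(y_\w)}+\widehat{\phi(y^\dag)}$ (Lemma \ref{L:02}), Lemma \ref{L:key2} and Corollary \ref{C:key1} --- which give $|\cc_{\mca{O}_0}^{\GL_R}(\s_{\widehat{\phi(\w)}[0]})|=q^{-l(\w)/2}$ --- together with the relation $\delta_{B_R}^{1/2}(\s_{\phi(y)})=\delta_{B_r}^{1/2}(\s_y)\cdot|\det(\s_y)|^{(R-r)/2}$, present $|\det(\s_y)|^{s-\frac{R-r}{2}}\cdot\overline{\mca{W}_{\mca{O}_0}^{\GL_R}}(\s_{\phi(y)})$ as an explicit product of powers of $q$ indexed by $(\w,y^\dag)$ --- exactly the bookkeeping of the proof of Theorem \ref{T:theta}, but without performing the sum over the $\mca{W}^{\GL_r}$-factors.

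The heart of the argument is the computation of $\int_{U_r}\wt\Delta_{n_\alpha s-\frac{R-1}{2}}(u\s_y)\,\psi(u)\,du$. Since $\wt\Delta_s$ is $K_r$-biinvariant and supported, on $\wt{T}_r$, on the dominant part of $\wt{T_{n_\alpha}^\dag}\cdot(\wt{T}_r\cap K_r)$, one has $\wt\Delta_s=\sum_{\lambda}\wt\Delta_s(\s_{n_\alpha\lambda})\cdot\mathbf{1}_{K_r\s_{n_\alpha\lambda}K_r}$ over $\GL_r$-dominant $\lambda\in Y_{\GL_r}$, so the Jacquet integral is a (for $\re(s)\gg 0$ absolutely convergent) superposition of the covering-group spherical Whittaker functions of \cite{Mc2}; organising this superposition by the Bruhat stratification of $U_r$ as in the proof of Proposition \ref{P:Pat} yields an expansion over $\w\in W_{\GL_r}$ with Gindikin--Karpelevich and Gauss-sum coefficients. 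The exponents in $\wt\Delta_s$ --- the shift $\delta_{B_r}^{(n_\alpha-1)/(2n_\alpha)}$, the normalisation $|\det|^{(s+r-1)/n_\alpha}$, and the specialisation $s\mapsto n_\alpha s-\frac{R-1}{2}$ --- are calibrated so that the $\w$-th term of this expansion coincides with the corresponding term from the previous step. I expect this to be the main obstacle: $\wt\Delta_s$ is \emph{not} the spherical vector of a genuine principal series of $\wt{\GL}_r^{(n)}$, so the Casselman--Shalika recursion must be carried out directly, which is precisely the content of \cite{Gin}. An alternative is to identify $\wt\Delta_s$, up to a scalar, with the spherical function of an exceptional representation $\Theta(\wt{\GL}_r^{(n)},\mu'_s)$ for a suitable $s$-dependent exceptional character, thereby reducing the computation to Proposition \ref{P:Wh-Th} on $\wt{\GL}_r^{(n)}$ itself; this requires knowing that the spherical function of an exceptional representation is supported exactly on the $n_\alpha$-divisible dominant cone, for which condition (\ref{F:nY}) of a fundamental pair is the relevant input.

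Finally, matching the two explicit $(\w,y^\dag)$-expansions yields the identity at $\wt{g}=\s_y$, hence everywhere; evaluation at $\wt{g}=1$ provides a convenient consistency check. As a byproduct, unfolding the Jacquet integral gives $\msc{Z}(s,\overline{\mca{W}_{\mca{O}_0}^{\GL_R}}\times\mca{W}^{\GL_r})=\int_{\GL_r}\wt\Delta_{n_\alpha s-\frac{R-1}{2}}(\wt{g})\cdot\mca{W}^{\GL_r}(\wt{g})\,d\wt{g}$ for every generic unramified $\pi$, and the explicit torus values of $\wt\Delta_s$ together with a Shintani-type evaluation of $\mca{W}^{\GL_r}$ on the $n_\alpha$-divisible cone would then deduce Conjecture \ref{C:GBH} in general.
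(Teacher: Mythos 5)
You should first be aware that the paper does not prove this statement at all: it is recorded as a conjecture attributed to Bump--Friedberg, and the paper only remarks that the reduction in \cite{BF} (from this identity to Conjecture \ref{C:GBH}) adapts to fundamental pairs, and that Ginzburg's proof \cite{Gin} of the identity --- which treats only Kazhdan--Patterson coverings --- is \emph{expected} to extend. Measured against that, your proposal is a plausible plan but contains a genuine gap exactly where the content lies. The preliminary reductions are fine: anti-genuineness and right $K_r$-invariance of both sides, reduction to $\wt{g}=\s_y$ dominant, the explicit form of the right-hand side via Proposition \ref{P:Wh-Th} for $\wt{\GL}_R$, the parametrisation of the support by $\bigsqcup_{\w\in W_{\GL_r}}(y_\w+Y_{\GL_r}^\dag)$ via Lemmas \ref{L:crucial} and \ref{L:02}, the value bookkeeping via Lemma \ref{L:key2} and Corollary \ref{C:key1}, and the relation $\delta_{B_R}^{1/2}(\s_{\phi(y)})=\delta_{B_r}^{1/2}(\s_y)\,|\det(\s_y)|^{(R-r)/2}$ are all correct, but they only rewrite the right-hand side. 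The left-hand side, the $\psi$-Fourier coefficient of $\wt{\Delta}_{n_\alpha s-\frac{R-1}{2}}$ along $U_r$, is never actually computed: writing $\wt\Delta_s$ as a superposition of characteristic functions of cosets $K_r\s_{n_\alpha\lambda}K_r$ and then invoking ``the Bruhat stratification as in Proposition \ref{P:Pat}'' does not go through, because $\wt\Delta_s$ is not the Whittaker function (or spherical vector pairing) of any single genuine principal series, so neither Proposition \ref{P:Pat} nor the Casselman--Shalika machinery of \cite{Mc2} applies to it as stated. You acknowledge this yourself (``this is precisely the content of \cite{Gin}''), but \cite{Gin} covers only the Kazhdan--Patterson family; for a general fundamental pair no substitute argument is supplied, which is why the statement remains a conjecture in the paper.

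Your proposed alternative --- identifying $\wt\Delta_s$, up to scalar, with the spherical function of a theta representation $\Theta(\wt{\GL}_r^{(n)},\mu'_s)$ with an $s$-dependent exceptional character --- is also not substantiated and, as formulated, does not reduce the problem to Proposition \ref{P:Wh-Th}. That proposition computes Whittaker functionals applied to translates of the spherical vector of $I(\chi)$, not $\psi$-Fourier coefficients of bi-$K$-invariant matrix coefficients; moreover the support of the relevant theta Whittaker function on $\wt{\GL}_r$ is the union over $\w\in W_{\GL_r}$ of cosets meeting $\mca{O}_0+Y_{\GL_r,Q,n}$ (with $Y_{\GL_r,Q,n}$ possibly strictly larger than $n_\alpha Y_{\GL_r}$, cf.\ (\ref{F:nY})), not the $n_\alpha$-divisible dominant cone on which $\wt\Delta_s$ is supported, so even the support-matching step you flag as ``the relevant input'' is an unproved and delicate assertion. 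In short: the easy half of the identity is correctly set up, but the hard half --- the direct evaluation of the Jacquet-type integral of the generating function and its term-by-term match with the $\wt{\GL}_R$ theta Whittaker values --- is deferred rather than proved, so the proposal does not establish the statement beyond what the paper already conjectures.
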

It is shown in \cite{BF} that for Kazhdan-Patterson coverings the above conjecture  implies the Bump-Hoffstein conjecture \ref{C:GBH}. Moreover, in a recent work \cite{Gin}, Ginzburg proves the above statement completely for Kazhdan-Patterson coverings.

We remark that the argument in \cite{BF} readily adapts to general fundamental pairs considered in this paper. Moreover, it is expected that the proof in \cite{Gin} should be applicable to the general case as well.

\subsection{The role of the dual group}
In this last part of the paper, we point out that it is not sufficient to consider a pair $(\wt{\GL}_r^{(n)}, \wt{\GL}_R^{(n)})$ (not necessarily a fundamental pair) such that the dual group of $\wt{\GL}_R^{(n)}$ is  isomorphic $\GL_R$ and $\Theta(\wt{\GL}_R^{(n)}, \chi)$ is distinguished. Namely, we highlight the fact that the two conditions $R=n_\alpha$ and $Y_{\GL_R, Q,n}=n_\alpha \cdot Y_{\GL_R}$ in  ({\sf FP2}) are crucial, essentially because of Lemma \ref{L:crucial}.
\vskip 5pt

The example comes from the case $r=1$ and $R=2$ already. More precisely, we consider the Kazhdan-Patterson three-fold covering group $\wt{\GL}_R:=\wt{\GL}_2$ with  parameter $\p=\bq=-1$, and $\wt{\GL}_r:=\wt{\GL}_1$ the obvious pull-back. By Example \ref{E:KP},
$$Y_{\GL_R, Q, n}= \set{ k_1 e_1 + k_2 e_2: k_1 \equiv k_2 \mod 3  }.$$
Moreover, the dual group of this $\wt{\GL}_2$ is
$$\wt{\GL}_3^\vee=\set{ (g, \lambda) \in \mbf{GL}_2\times \mbf{GL}_1: \det(g)=\lambda^3}.$$
There is an isomorphism
$$\wt{\GL}_3^\vee \simeq \mbf{GL}_2$$
given by $(g, \lambda) \mapsto g\cdot \lambda^{-1}$.

It can be checked easily that $\wp(\OF_{Q,n})= \set{ \wp(\mca{O}_0) }$, and therefore by Proposition \ref{P:dim} the theta representation $\Theta(\wt{\GL}_3, \chi)$ for any exceptional $\chi$ is distinguished. Also, $\wt{\GL}_1^{(3)}$ is just the three-fold covering of $\GL_1$ arising from $Q(e_1)=\p=-1$. In this case, a Whittaker functional for the representation $i(\mu)$ of $\wt{\GL}_1^{(3)}$ is just a linear functional $\lambda: i(\mu) \to \C$, where $\mu\in \Hom_\iota(Z(\wt{\GL}_1), \C^\times)$. Denote by $\mca{W}^{\GL_1}_\lambda$ the the arising Whittaker model, i.e., $\mca{W}^{\GL_1}_\lambda(\wt{g}) = \lambda(i(\mu)(\wt{g}) v_0)$, where $v_0 \in i(\mu)$ is the normalized unramified vector.

We consider
$$ \msc{Z}(s, \mca{W}^{\GL_1}_\lambda \times \overline{\mca{W}_{\mca{O}_0}^{\GL_2} })
:= \int_{\GL_1} \mca{W}_\lambda^{\GL_1}(\wt{g}) \cdot \overline{\mca{W}_{\mca{O}_0}^{\GL_2}}( \phi(\wt{g}))  \cdot  |\det(g)|^{s-\frac{1}{2} } dg $$
which is equal to
$$ \sum_{y\in Y_{\GL_1}}  \mca{W}_\lambda^{\GL_1}(\s_y) \cdot \overline{\mca{W}_{\mca{O}_0}^{\GL_2}}(\s_{\phi(y)})  \cdot  |\det(\s_y)|^s \cdot \delta_{B_2}^{-1/2}(\s_{\phi(y)}).$$

For $y\in Y_{\GL_1}$, it is easy to see that $\mca{W}_{\mca{O}_0}^{\GL_2}(\s_{\phi(y)})=0$ unless $y$ belongs to either  $Y_{\GL_1, \text{id}}=\set{3k \cdot e_1: k\ge 0}$
or
$$Y_{\GL_1, \w_\alpha}= \set{ke_1: k\ge 0 \text{ and } \w_{\alpha}(k e_1) \in \w_\alpha[0] + Y_{\GL_2, Q, n }  }.$$
More explicitly, $Y_{\GL_1, \w_\alpha}= \set{(3k-2)e_1: k\ge 1}$. Therefore we have
\begin{align*}
& \msc{Z}(s, \mca{W}^{\GL_1}_\lambda \times \overline{\mca{W}_{\mca{O}_0}^{\GL_2} })\\
=& \sum_{y\in Y_{\GL_1, \text{id}} \cup Y_{\GL_1, \w_\alpha}}  \mca{W}_\lambda^{\GL_1}(\s_y) \cdot \overline{\mca{W}_{\mca{O}_0}^{\GL_2}}(\s_{\phi(y)})  \cdot  |\det(\s_y)|^s \cdot \delta_{B_2}^{-1/2}(\s_{\phi(y)}).
\end{align*}
Now on the one hand,
\begin{align*}
& \sum_{y\in Y_{\GL_1, \text{id}}}  \mca{W}_\lambda^{\GL_1}(\s_y) \cdot \overline{\mca{W}_{\mca{O}_0}^{\GL_2}}(\s_{\phi(y)})  \cdot  |\det(\s_y)|^s \cdot \delta_{B_2}^{-1/2}(\s_{\phi(y)})  \\
= & \overline{\mca{W}_{\mca{O}_0}^{\GL_2}}(1) \cdot \sum_{k\ge 0} \overline{ \cc_{\mca{O}_0}^{\GL_2} }(\s_{3k e_2}) \cdot \lambda \left( i(\mu)(\s_{3ke_1}) v_0  \right)  \cdot q^{-3ks} \\
= &  \sum_{k\ge 0}  \overline{\chi}_{e_2}^k \cdot (\mu_{e_1})^k \cdot (q^{-3s})^k \cdot \overline{ \cc_{\mca{O}_0}^{\GL_2}(1)  } \cdot \lambda( v_0 ) \\
= & \frac{1}{1- \overline{\chi}_{e_2} \cdot \mu_{e_1} \cdot q^{-3s}} \cdot  \mca{W}_\lambda^{\GL_1}(1) \cdot \overline{\mca{W}_{\mca{O}_0}^{\GL_2}}(1) .
\end{align*}
On the other hand,
\begin{align*}
& \sum_{y\in Y_{\GL_1, \w_\alpha}}  \mca{W}_\lambda^{\GL_1}(\s_y) \cdot \overline{\mca{W}_{\mca{O}_0}^{\GL_2}}(\s_{\phi(y)})  \cdot  |\det(\s_y)|^s \cdot \delta_{B_2}^{-1/2}(\s_{\phi(y)})  \\
= & \overline{\mca{W}_{\mca{O}_0}^{\GL_2}}(1) \cdot \sum_{k\ge 1} \overline{ \cc_{\mca{O}_0}^{\GL_2}}(\s_{\widehat{(3k-2)e_1}}) \cdot \lambda \left( i(\mu)(\s_{(3k-1)e_1}) v_0  \right)  \cdot q^{-(3k-2)s} \\
= & \overline{\mca{W}_{\mca{O}_0}^{\GL_2}}(1) \cdot \sum_{k\ge 1} \overline{ \cc_{\mca{O}_0}^{\GL_2} }(\s_{\w_\alpha[0]}) \cdot
\overline{ \chi }(\s_{-e_1+ (3k-1)e_2}) \cdot
  \lambda \left( i(\mu)(\s_{(3k-1)e_1}) v_0  \right)  \cdot q^{-(3k-2)s}\\
= &   \overline{\mca{W}_{\mca{O}_0}^{\GL_2}}(1) \cdot \overline{ \cc_{\mca{O}_0}^{\GL_2} }(\s_{\w_\alpha[0]}) \cdot \overline{\chi}(\s_{-e_1+ 2e_2}) \cdot q^{-s} \cdot \lambda\left( i(\mu)(\s_{2e_1})v_0 \right) \cdot \sum_{k\ge 0}  \overline{\chi}_{e_2}^k \cdot (\mu_{e_1})^k \cdot (q^{-3s})^k \\
= &  \overline{\mca{W}_{\mca{O}_0}^{\GL_2}}(1) \cdot \frac{q^{-s-1}}{1- \overline{\chi}_{e_2} \cdot \mu_{e_1} \cdot q^{-3s}} \cdot  \g(Q(\alpha^\vee))^{-1} \cdot \overline{ \cc_{\mca{O}_0}^{\GL_r}}(1) \cdot \overline{\chi}(\s_{-e_1+ 2e_2}) \cdot \lambda\left( i(\mu)(\s_{2e_1})v_0 \right) \\
= &   \overline{ \mca{W}_{\mca{O}_0}^{\GL_2}}(1) \cdot \frac{q^{-s-1}}{1- \overline{\chi}_{e_2} \cdot \mu_{e_1} \cdot q^{-3s}} \cdot  \g(Q(\alpha^\vee))^{-1}  \cdot \overline{\chi}(\s_{-e_1+ 2e_2}) \cdot \lambda\left( i(\mu)(\s_{2e_1})v_0 \right) .
\end{align*}

We see that the analogous Bump-Hoffstein conjecture does not hold for this pair $(\wt{\GL}_1^{(3)}, \wt{\GL}_2^{(3)})$ above. In view of this, one would expect that the Bump-Hoffstein conjecture holds for a pair only when it is a fundamental pair in the sense of Definition \ref{D:FP}.

%%%%%%%%%%%%%%% %%%%%%%%%%%%%%%%%%%%%%%%%%%%%%%%%%%%%%%%%
\begin{bibdiv}
\begin{biblist}[\resetbiblist{9999999}]*{labels={alphabetic}}

%%% main contents %%%
\bib{BJ}{article}{
   author={Ban, Dubravka},
   author={Jantzen, Chris},
   title={The Langlands quotient theorem for finite central extensions of
   $p$-adic groups},
   journal={Glas. Mat. Ser. III},
   volume={48(68)},
   date={2013},
   number={2},
   pages={313--334},
   issn={0017-095X},
   review={\MR{3151110}},
   doi={10.3336/gm.48.2.07},
}

\bib{BD}{article}{
   author={Brylinski, Jean-Luc},
   author={Deligne, Pierre},
   title={Central extensions of reductive groups by $\bold K_2$},
   journal={Publ. Math. Inst. Hautes \'Etudes Sci.},
   number={94},
   date={2001},
   pages={5--85},
   issn={0073-8301},
   review={\MR{1896177}},
   doi={10.1007/s10240-001-8192-2},
}
\bib{BF}{article}{
   author={Bump, Daniel},
   author={Friedberg, Solomon},
   title={Metaplectic generating functions and Shimura integrals},
   conference={
      title={Automorphic forms, automorphic representations, and arithmetic},
      address={Fort Worth, TX},
      date={1996},
   },
   book={
      series={Proc. Sympos. Pure Math.},
      volume={66},
      publisher={Amer. Math. Soc., Providence, RI},
   },
   date={1999},
   pages={1--17},
   review={\MR{1703755}},
}
\bib{BFG}{article}{
   author={Bump, Daniel},
   author={Furusawa, Masaaki},
   author={Ginzburg, David},
   title={Non-unique models in the Rankin-Selberg method},
   journal={J. Reine Angew. Math.},
   volume={468},
   date={1995},
   pages={77--111},
   issn={0075-4102},
   review={\MR{1361787}},
}
\bib{BH1}{article}{
   author={Bump, Daniel},
   author={Hoffstein, Jeffrey},
   title={On Shimura's correspondence},
   journal={Duke Math. J.},
   volume={55},
   date={1987},
   number={3},
   pages={661--691},
   issn={0012-7094},
   review={\MR{904946}},
   doi={10.1215/S0012-7094-87-05533-5},
}
\bib{BH2}{article}{
   author={Bump, Daniel},
   author={Hoffstein, Jeffrey},
   title={Some conjectured relationships between theta functions and
   Eisenstein series on the metaplectic group},
   conference={
      title={Number theory},
      address={New York},
      date={1985/1988},
   },
   book={
      series={Lecture Notes in Math.},
      volume={1383},
      publisher={Springer, Berlin},
   },
   date={1989},
   pages={1--11},
   review={\MR{1023915}},
   doi={10.1007/BFb0083566},
}
\bib{Bou}{book}{
   author={Bourbaki, Nicolas},
   title={Lie groups and Lie algebras. Chapters 4--6},
   series={Elements of Mathematics (Berlin)},
   note={Translated from the 1968 French original by Andrew Pressley},
   publisher={Springer-Verlag, Berlin},
   date={2002},
   pages={xii+300},
   isbn={3-540-42650-7},
   review={\MR{1890629}},
   doi={10.1007/978-3-540-89394-3},
}
\bib{FL}{article}{
   author={Finkelberg, Michael},
   author={Lysenko, Sergey},
   title={Twisted geometric Satake equivalence},
   journal={J. Inst. Math. Jussieu},
   volume={9},
   date={2010},
   number={4},
   pages={719--739},
   issn={1474-7480},
   review={\MR{2684259}},
   doi={10.1017/S1474748010000034},
}
\bib{Ga1}{article}{
   author={Gao, Fan},
   title={The Langlands-Shahidi L-functions for Brylinski-Deligne extensions},
   status={preprint, to appear in Amer. J. Math.},
}
\bib{Ga2}{article}{
   author={Gao, Fan},
   title={Distinguished theta representations for Brylinski-Deligne covering groups},
   status={preprint, available at https://arxiv.org/abs/1602.01880},
}
\bib{GG}{article}{
   author={Gan, Wee Teck},
   author={Gao, Fan},
   title={The Langlands-Weissman program for Brylinski-Deligne extensions},
   status={preprint, available at www.math.nus.edu.sg/~matgwt},
}
\bib{Gin}{article}{
   author={Ginzburg, David},
   title={Generating functions on covering groups},
   status={preprint, available at https://arxiv.org/abs/1603.05784v1},
}
\bib{GHPS}{article}{
   author={Gelbart, Stephen},
   author={Howe, Roger},
   author={Piatetski-Shapiro, Ilya},
   title={Uniqueness and existence of Whittaker models for the metaplectic
   group},
   journal={Israel J. Math.},
   volume={34},
   date={1979},
   number={1-2},
   pages={21--37 (1980)},
   issn={0021-2172},
   review={\MR{571393}},
   doi={10.1007/BF02761822},
}
\bib{KP}{article}{
   author={Kazhdan, D. A.},
   author={Patterson, S. J.},
   title={Metaplectic forms},
   journal={Inst. Hautes \'Etudes Sci. Publ. Math.},
   number={59},
   date={1984},
   pages={35--142},
   issn={0073-8301},
   review={\MR{743816}},
}
\bib{Mc1}{article}{
   author={McNamara, Peter J.},
   title={Principal series representations of metaplectic groups over local
   fields},
   conference={
      title={Multiple Dirichlet series, L-functions and automorphic forms},
   },
   book={
      series={Progr. Math.},
      volume={300},
      publisher={Birkh\"auser/Springer, New York},
   },
   date={2012},
   pages={299--327},
   review={\MR{2963537}},
   doi={10.1007/978-0-8176-8334-413},
}
\bib{Mc2}{article}{
   author={McNamara, Peter J.},
   title={The metaplectic Casselman-Shalika formula},
   journal={Trans. Amer. Math. Soc.},
   volume={368},
   date={2016},
   number={4},
   pages={2913--2937},
   issn={0002-9947},
   review={\MR{3449262}},
   doi={10.1090/tran/6597},
}
\bib{Pat}{article}{
   author={Patterson, S. J.},
   title={Metaplectic forms and Gauss sums. I},
   journal={Compositio Math.},
   volume={62},
   date={1987},
   number={3},
   pages={343--366},
   issn={0010-437X},
   review={\MR{901396}},
}
\bib{PSR}{article}{
   author={Piatetski-Shapiro, I.},
   author={Rallis, S.},
   title={A new way to get Euler products},
   journal={J. Reine Angew. Math.},
   volume={392},
   date={1988},
   pages={110--124},
   issn={0075-4102},
   review={\MR{965059}},
   doi={10.1515/crll.1988.392.110},
}
\bib{Re}{article}{
   author={Reich, Ryan Cohen},
   title={Twisted geometric Satake equivalence via gerbes on the
   factorizable Grassmannian},
   journal={Represent. Theory},
   volume={16},
   date={2012},
   pages={345--449},
   issn={1088-4165},
   review={\MR{2956088}},
   doi={10.1090/S1088-4165-2012-00420-4},
}
\bib{Sha}{book}{
   author={Shahidi, Freydoon},
   title={Eisenstein series and automorphic $L$-functions},
   series={American Mathematical Society Colloquium Publications},
   volume={58},
   publisher={American Mathematical Society, Providence, RI},
   date={2010},
   pages={vi+210},
   isbn={978-0-8218-4989-7},
   review={\MR{2683009}},
   doi={10.1090/coll/058},
}
\bib{Sa}{article}{
   author={Savin, Gordan},
   title={A nice central extension of $GL_r$},
   status={preprint},
}
\bib{Suz1}{article}{
   author={Suzuki, Toshiaki},
   title={Rankin-Selberg convolutions of generalized theta series},
   journal={J. Reine Angew. Math.},
   volume={414},
   date={1991},
   pages={149--205},
   issn={0075-4102},
   review={\MR{1092629}},
   doi={10.1515/crll.1991.414.149},
}
\bib{Suz2}{article}{
   author={Suzuki, Toshiaki},
   title={Metaplectic Eisenstein series and the Bump-Hoffstein conjecture},
   journal={Duke Math. J.},
   volume={90},
   date={1997},
   number={3},
   pages={577--630},
   issn={0012-7094},
   review={\MR{1480547}},
   doi={10.1215/S0012-7094-97-09016-5},
}
\bib{Suz3}{article}{
   author={Suzuki, Toshiaki},
   title={Distinguished representations of metaplectic groups},
   journal={Amer. J. Math.},
   volume={120},
   date={1998},
   number={4},
   pages={723--755},
   issn={0002-9327},
   review={\MR{1637947}},
}
\bib{Szp}{article}{
   author={Szpruch, Dani},
   title={Uniqueness of Whittaker model for the metaplectic group},
   journal={Pacific J. Math.},
   volume={232},
   date={2007},
   number={2},
   pages={453--469},
   issn={0030-8730},
   review={\MR{2366363}},
   doi={10.2140/pjm.2007.232.453},
}
\bib{We1}{article}{
   author={Weissman, Martin H.},
   title={Metaplectic tori over local fields},
   journal={Pacific J. Math.},
   volume={241},
   date={2009},
   number={1},
   pages={169--200},
   issn={0030-8730},
   review={\MR{2485462}},
   doi={10.2140/pjm.2009.241.169},
}
\bib{We2}{article}{
   author={Weissman, Martin H.},
   title={L-groups and parameters for covering groups},
   status={preprint, to appear in Ast\'erisque},
}

%%%

\end{biblist}
\end{bibdiv}

\end{document}